\titleformat{\subsubsection}[runin]
{\normalfont\bfseries}{\thesubsubsection}{1em}{}
\newcommand{\ZZ}{\mathbb Z}
\newcommand{\QQ}{\mathbb Q}
\newcommand{\RR}{\mathbb R}
\newcommand{\GG}{\mathbb G}
\newcommand{\PP}{\mathbb P}
\newcommand{\NN}{\mathbb N}
\renewcommand{\AA}{\mathbb A}
\newcommand{\cO}{\mathcal O}
\newcommand{\cL}{\mathcal L}
\newcommand{\cM}{\mathcal M}
\newcommand{\cF}{\mathcal F}
\newcommand{\cK}{\mathcal K}
\newcommand{\cX}{\mathcal X}
\newcommand{\cY}{\mathcal Y}
\renewcommand{\Tilde}{\widetilde}
\renewcommand{\Bar}{\overline}
\DeclareMathOperator{\SL}{SL}
\DeclareMathOperator{\Spec}{Spec}
\DeclareMathOperator{\Hilb}{Hilb}
\DeclareMathOperator{\LW}{LW}
\DeclareMathOperator{\MR}{MR}
\DeclareMathOperator{\ind}{ind}
\DeclareMathOperator{\trop}{trop}
\DeclareMathOperator{\Sing}{Sing}
\DeclareMathOperator{\pr}{pr}
\DeclareMathOperator{\SWS}{SWS}
\DeclareMathOperator{\colim}{colim}
\DeclareMathOperator{\PSWS}{PSWS}
\DeclareMathOperator{\Ob}{Ob}
\DeclareMathOperator{\red}{red}
\DeclareMathOperator{\diag}{diag}
\numberwithin{equation}{subsection}
\newtheorem{theorem}{Theorem}[section]
\newtheorem*{theorem*}{Theorem}
\newtheorem{lemma}[theorem]{Lemma}
\newtheorem{proposition}[theorem]{Proposition}
\newtheorem{corollary}[theorem]{Corollary}
\theoremstyle{definition}
\newtheorem{definition}[theorem]{Definition}
\theoremstyle{remark}
\newtheorem{remark}[theorem]{Remark}
\begin{document}

\title{\Large{GOOD MODELS OF HILBERT SCHEMES OF POINTS OVER SEMISTABLE DEGENERATIONS}}
\author{CALLA TSCHANZ}
\date{}
\maketitle

\textbf{Abstract.}
In this paper, we explore different possible choices of expanded degenerations and define appropriate stability conditions in order to construct good degenerations of Hilbert schemes of points over semistable degenerations of surfaces, given as proper Deligne-Mumford stacks. These stacks provide explicit examples of constructions arising from the work of Maulik and Ranganathan.  This paper builds upon and generalises previous work in which we constructed a special example of such a stack. We also explain how these methods apply to constructing minimal models of type III degenerations of hyperk\"ahler varieties, namely Hilbert schemes of points on K3 surfaces.

\setcounter{tocdepth}{1}
\tableofcontents

\section{Introduction}

An important method in the study of smooth varieties is to allow these varieties to degenerate to a sum of simpler irreducible components and use the degeneration to recover information about the original variety.
Our aim is to explore various constructions of good degenerations of Hilbert schemes of points over a semistable family of surfaces $X\to C$. We combine techniques of expanded degenerations (or expansions), geometric invariant theory, stacks and tropical geometry, as well as observe a symmetry breaking phenomenon in the constructions which allows us to define explicit examples. This builds upon previous work in which we constructed such a degeneration of Hilbert schemes of points as a proper Deligne-Mumford stack (\cite{CT}). The stability conditions required to make this stack proper are much simpler than would be expected in general for this type of problem. This is an interesting consequence of the specific choices made in the expanded degenerations we considered. In this paper, we generalise this construction, investigate alternative choices of expanded degenerations and describe the appropriate refinements of the stability condition needed to obtain proper moduli stacks. We give a detailed analysis of the stability conditions needed to ensure universal closure and separatedness, both from a geometric and tropical perspective.

\subsubsection*{Setup and motivation.}
We recall the basic setup of this problem from \cite{CT}. Let $X\to C$ be a semistable projective family of surfaces over a smooth curve such that the singularities of the special fibre $X_0$ over $0\in C$ are given étale locally by three planes intersecting transversely in $\AA^3$. In other words, we may consider the étale local model for this family given by $\Spec k[x,y,z,t]/(xyz-t)$, where $k$ is an algebraically closed field of characteristic zero.

\medskip
Now, let $X^\circ \coloneqq X\setminus X_0$ and $C^\circ\coloneqq C\setminus \{0\}$ and consider the relative Hilbert scheme of $m$ points $\Hilb^m(X^\circ/C^\circ)$. Our goal is to construct compactifications of this object which satisfy some desired properties, which we discuss in the two bullet points below. The obvious compactification $\Hilb^m(X/C)$ does not satisfy these properties. In the case of the first bullet point, this comes from the fact that $\Hilb^m(X/C)$ is not logarithmically flat with respect to the pullback divisorial logarithmic structure given by the divisor $X_0$ in $X$. For the second point, it follows from the fact that the special fibre of $\Hilb^m(X/C)$ is very singular (which makes the total space very singular).

In $\Hilb^m(X^\circ/C^\circ)$, we have removed the limits of the families of length $m$ zero-dimensional subschemes and we wish to compactify, i.e.\ add limits back in in such a way that the resulting object satisfies “good" properties. These limits will again be length $m$ zero-dimensional subschemes, some on $X_0$ and some on different varieties. This will be achieved by constructing \emph{expansions} (birational modifications of $X_0$ in a 1-parameter family)  and selecting the appropriate limit for each family of length $m$ zero-dimensional subschemes in these modifications with the help of a stability condition. The two main meanings of “good" we may consider are the following.
\begin{itemize}
    \item The resulting degeneration of Hilbert schemes of points should be flat and proper over $C$, and the limit over $0\in C$ of each family of length $m$ zero-dimensional subschemes should be supported in the smooth locus of an underlying surface. Clearly, this is not the case for the obvious compactification $\Hilb^m(X/C)$, as certain length $m$ zero-dimensional subschemes in the fibre over $0\in C$ will have some of their support lying in $\Sing(X_0)$. We wish to remove these subschemes and replace them by subschemes having support in the smooth locus of some modifications of $X_0$. As in \cite{CT}, we will refer to the support of a subscheme lying in the smooth locus of a surface as the condition that the subscheme is \emph{smoothly supported}. Imposing these properties has the following useful consequences. The question of studying Hilbert schemes of $m$ points on $X_0$ is replaced by an easier problem. We study the Hilbert schemes of $m'$ points on the interior of each irreducible component of the modifications of $X_0$, where $m'\leq m$, and consider products of these simpler Hilbert schemes of points. The smooth fibres of the family of Hilbert schemes of $m$ points degenerate to a union of these products, which is easier to describe. Moreover, in such a compactification, the special fibre is stratified in a way which encodes the data of the families of length $m$ zero-dimensional subschemes. By observing where the limit of a family lands within this stratification, we may recover the defining equations of this family. These properties have applications to cohomological computations and enumerative geometry.
    \item Secondly, one can consider the more specific case where $X \to C$ is a type III semistable degeneration of K3 surfaces, with a relative holomorphic symplectic logarithmic 2-form. We can then try to construct a family of Hilbert schemes of points on $X$ which will be minimal in the sense of the minimal model program, and by this we mean a semistable or divisorial log terminal (dlt) degeneration with trivial canonical bundle (see Section \ref{minimality section} for details on minimality). The singularities arising in such a degeneration $X\to C$ are of the type described above, i.e.\ we can restrict ourselves to the local problem where $X_0$ is thought of as given by $xyz=0$ in $\AA^3$. Among other reasons, Hilbert schemes of points on K3 surfaces are interesting to study because they form a class of examples of hyperk\"ahler varieties.
\end{itemize}
In this paper, we will construct degenerations of Hilbert schemes of points satisfying the properties of the first point above. We then show that the solutions considered here also give us constructions of semistable minimal hyperk\"ahler degenerations as stacks. It is worth noting, however, that it is possible to construct dlt minimal degenerations as schemes which do not fulfill the requirement of the first point.

\subsubsection*{Main results.} Here, we briefly describe the main results of this paper.

\medskip
We start by making a sequence of blow-ups on a fibre product $X\times_{\AA^1}\AA^{n+1}$ over a base $C\times_{\AA^1}\AA^{n+1}$. This process is similar to the construction in \cite{CT} (see Section \ref{previous construction section} for an overview of this previous construction), however, here, we allow for more types of blow-ups to be made. The sequence of blow-ups on $X\times_{\AA^1}\AA^{n+1}$ is encoded by a choice of sets $A,B\subset \{1,\dots, n+1\}$ which satisfy a specific property called the \emph{unbroken} property (see Definition \ref{unbroken}). We name the resulting expanded degenerations $X[A,B]$. For a specific choice of $A$ and $B$, we recover the expanded degeneration $X[n]$ of \cite{CT}. We describe an action of $\GG_m^n$ on $X[A,B]$, an embedding of $X[A,B]$ into a product of projective bundles and discuss various GIT stability conditions on the relative Hilbert scheme of $m$ points of this expanded degeneration $\Hilb^m(X[A,B]/C\times_{\AA^1}\AA^{n+1})$. We then present a stack of expansions $\mathfrak{C}'$ with a family $\mathfrak{X}'$ over it and explore the different choices of stability condition which will allow us to construct a stack of stable length $m$ zero-dimensional subschemes in $\mathfrak{X}'$ which is proper over $C$. We start by extending the LW and SWS stability conditions defined in \cite{CT} to this setting and denote by $\mathfrak{N}^m_{\LW}$ and $\mathfrak{N}^m_{\SWS}$ the corresponding stacks of LW and SWS stable length $m$ zero-dimensional subschemes on $\mathfrak{X}'$. We show that, though both of these stacks are universally closed, they are not separated, as they contain several non-equivalent representatives for the same limit. We consider two main approaches to solve this problem.

\medskip
The first method to construct separated stacks is to define equivalence classes identifying all limits of a given object in $\mathfrak{N}^m_{\LW}$ and $\mathfrak{N}^m_{\SWS}$. The stacks whose objects are these equivalence classes are denoted $\Bar{\mathfrak{N}^m_{\LW}}$ and $\Bar{\mathfrak{N}^m_{\SWS}}$. These are no longer algebraic stacks but we may now prove the following theorem.

\begin{theorem}
    The stacks $\Bar{\mathfrak{N}^m_{\LW}}$ and $\Bar{\mathfrak{N}^m_{\SWS}}$ are proper over $C$ and have finite automorphisms.
\end{theorem}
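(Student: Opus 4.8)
The plan is to establish properness over $C$ through the valuative criterion, since $\Bar{\mathfrak{N}^m_{\LW}}$ and $\Bar{\mathfrak{N}^m_{\SWS}}$ are no longer algebraic and one cannot simply invoke the standard machinery for algebraic stacks; instead the three constituent properties (finite type, universal closure, separatedness) must be checked by hand on the groupoids of objects over discrete valuation rings. Finite type over $C$ is the easiest point: the objects are length $m$ zero-dimensional subschemes of the family $\mathfrak{X}'$ over the stack of expansions $\mathfrak{C}'$, and for fixed $m$ these form bounded families; since the quotient map $\mathfrak{N}^m_{\bullet}\to\Bar{\mathfrak{N}^m_{\bullet}}$ is surjective on objects, boundedness descends to the quotient.

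For universal closure I would argue that it is inherited directly from the algebraic stacks $\mathfrak{N}^m_{\LW}$ and $\mathfrak{N}^m_{\SWS}$, whose universal closure is already established in the preceding sections. Given a DVR $R$ with fraction field $K$ and a $K$-point of $\Bar{\mathfrak{N}^m_{\bullet}}$, I would lift it to a $K$-point of $\mathfrak{N}^m_{\bullet}$, which is possible because the quotient is surjective on objects, apply the valuative criterion there to obtain a limit after a finite extension $R'\supset R$, and push this limit forward. Surjectivity of the quotient on objects then guarantees that the resulting $R'$-point genuinely extends the original $K$-point, giving existence of limits.

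The heart of the argument is separatedness, i.e.\ uniqueness of limits. The failure of separatedness for $\mathfrak{N}^m_{\LW}$ and $\mathfrak{N}^m_{\SWS}$ is caused precisely by the presence of several non-equivalent limits of a single one-parameter family, a manifestation of the symmetry-breaking phenomenon and of the freedom in the choice of expansion, and the equivalence relation defining $\Bar{\mathfrak{N}^m_{\bullet}}$ is designed to collapse exactly this set of competing limits into one object. I would therefore verify the valuative criterion as follows: two $R$-points of $\Bar{\mathfrak{N}^m_{\bullet}}$ agreeing over $K$ are, after lifting, two one-parameter families sharing the same generic fibre, so their special fibres are two limits of this common family and hence lie in a single equivalence class by construction, forcing the two $R$-points to coincide in the quotient. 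The delicate point, and the main obstacle, is to show that the equivalence relation is calibrated correctly: coarse enough that all genuine limits of a fixed family are identified, yet fine enough that limits of distinct families are never merged, so that the quotient retains its moduli meaning and the structure map to $C$ stays well-defined. I would control this using the tropical and GIT descriptions of the limits developed earlier, showing that the competing limits are governed by the combinatorics of the expansions $X[A,B]$ and the chamber structure of the $\GG_m^n$-action, and that two limits are equivalent exactly when they lie over the same point of $C$ and share the same underlying generic subscheme.

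Finally, for finiteness of automorphisms I would use that the LW and SWS stability conditions cut the $\GG_m^n$-stabiliser of a stable configuration down to a finite group, so that the objects of $\mathfrak{N}^m_{\LW}$ and $\mathfrak{N}^m_{\SWS}$ already have finite automorphisms. Passing to equivalence classes enlarges an automorphism group only by combining the automorphism groups of the finitely many representatives within a class together with the finitely many identifications between them; since the chamber structure produces only finitely many competing limits, each equivalence class is finite, and I would check that no positive-dimensional automorphisms are created in the quotient, so that the automorphism group of each class remains finite.
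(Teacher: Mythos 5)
Your overall skeleton matches the paper's: finite automorphisms and universal closure are inherited from $\mathfrak{N}^m_{\LW}$ and $\mathfrak{N}^m_{\SWS}$ (the paper additionally invokes Corollary \ref{corollary limits of special elements} for families whose generic fibre is itself a modified special fibre, a case your lifting argument passes over silently), and passing to equivalence classes adds no automorphisms --- though the paper's reason is simpler than yours: the added equivalences are not isomorphisms, so they contribute nothing to automorphism groups, and no finiteness of the equivalence classes is needed.

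The genuine gap is in separatedness, which you correctly identify as the heart of the matter but then only promise to resolve. Two concrete problems. First, your lifting step asserts that two $R$-points agreeing over $K$ lift to families ``sharing the same generic fibre.'' They do not: they share only an \emph{equivalence class} of generic fibres, and when the generic fibre is itself an expansion (e.g.\ the family lies entirely over $0\in C$), the representatives $(Z_\eta,\cX_\eta)$ and $(Z'_\eta,\cX'_\eta)$ can be honestly non-isomorphic schemes, so the ``two limits of one family are identified by construction'' argument does not directly apply --- this is precisely the case the proof must handle. Second, your proposed calibration criterion (``two limits are equivalent exactly when they lie over the same point of $C$ and share the same underlying generic subscheme'') is not the invariant that actually does the work. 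The paper argues instead with an explicit étale-local computation: writing the irreducible components of $Z$ and $Z'$ as $\{(c_{i,1}u^{e_{i,1}}, c_{i,2}u^{e_{i,2}}, c_{i,3}u^{e_{i,3}})\}$ and $\{(d_{i,1}u^{f_{i,1}}, d_{i,2}u^{f_{i,2}}, d_{i,3}u^{f_{i,3}})\}$, it observes that the stack equivalences act trivially on $x,y,z$, so the valuation vectors --- which are exactly the tropicalisation data --- are invariants constant across each equivalence class; flatness then forces each $Z_i$, $Z_i'$ over the closed point into a bubble blown up along $\langle x, cu^{e_{i,1}}\rangle$ and $\langle y, c'u^{e_{i,2}}\rangle$, so both special fibres are associated pairs for the same configuration of vertices in $\trop(X_0)$ and hence lie in one class. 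Your appeal to ``the chamber structure of the $\GG_m^n$-action'' gestures in this direction but substitutes a plan for the proof; without the tropical-invariance computation (or an equivalent), the uniqueness of limits is not established.
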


This approach parallels work of Kennedy-Hunt on logarithmic Quot schemes \cite{K-H}.

\medskip
The second method we use is to define an additional stability condition, called \textit{$(\alpha,\beta)$-stability}, which we use to cut out proper substacks of $\mathfrak{N}^m_{\LW}$ and $\mathfrak{N}^m_{\SWS}$. The $(\alpha,\beta)$-stability condition must satisfy some subtle properties in order for the substack it cuts out to be proper over $C$: it must be what we call a \textit{proper LW or SWS stability condition} (see Definition \ref{proper SWS stab}). A proper LW stability condition $(\alpha,\beta)$ as defined here recovers one of the choices of stability condition arising from the methods of Maulik and Ranganathan \cite{MR} in the case of Hilbert schemes of points. We denote by $\mathfrak{N}^m_{\MR,(\alpha,\beta)}$ and $\mathfrak{N}^m_{\PSWS,(\alpha,\beta)}$ the stacks of stable length $m$ zero-dimensional subschemes in $\mathfrak{X}'$ for proper LW and SWS stability conditions $(\alpha,\beta)$. We are then able to prove the following results.

\begin{theorem}
    The stacks $\mathfrak{N}^m_{\MR,(\alpha,\beta)}$ and $\mathfrak{N}^m_{\PSWS,(\alpha,\beta)}$ are Deligne-Mumford and proper over $C$.
\end{theorem}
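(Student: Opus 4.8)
The plan is to establish the two assertions—being Deligne-Mumford and being proper over $C$—as largely separate verifications, in each case transferring properties from the ambient stacks $\mathfrak{N}^m_{\LW}$ and $\mathfrak{N}^m_{\SWS}$ through the $(\alpha,\beta)$-stability condition. Since the constructions of $\mathfrak{N}^m_{\MR,(\alpha,\beta)}$ and $\mathfrak{N}^m_{\PSWS,(\alpha,\beta)}$ are parallel, I would treat them simultaneously, indicating where the LW and SWS cases diverge.

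For the Deligne-Mumford property, I would first observe that each stack is a substack of the algebraic stack $\mathfrak{N}^m_{\LW}$ (resp. $\mathfrak{N}^m_{\SWS}$) cut out by the $(\alpha,\beta)$-stability condition, and hence inherits algebraicity. The essential new input is that $(\alpha,\beta)$-stability rigidifies the $\GG_m^n$-action. In $\mathfrak{N}^m_{\LW}$ and $\mathfrak{N}^m_{\SWS}$, the failure of separatedness is tied to objects carrying infinitesimal automorphisms coming from this torus; imposing a proper stability condition selects representatives whose stabilizer in $\GG_m^n$ is finite. Because we work over a field of characteristic zero, finite automorphism groups are automatically reduced and unramified, so the diagonal is unramified and the stack is Deligne-Mumford.

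For properness over $C$, I would apply the valuative criterion. Let $R$ be a discrete valuation ring with fraction field $K$, and consider an $(\alpha,\beta)$-stable family over $\Spec K$. For the existence of a limit, I would use that $\mathfrak{N}^m_{\LW}$ (resp. $\mathfrak{N}^m_{\SWS}$) is already universally closed to extend the family to an LW-stable (resp. SWS-stable) family over $\Spec R$. This extension need not be $(\alpha,\beta)$-stable on the special fibre, but the defining conditions of a proper stability condition (Definition \ref{proper SWS stab}) guarantee that one may modify the limit within its $\GG_m^n$-orbit to reach an $(\alpha,\beta)$-stable representative, yielding the required extension inside the stable locus. For uniqueness, I would take two $(\alpha,\beta)$-stable extensions agreeing over $\Spec K$; their special fibres correspond to limits lying in a common $\GG_m^n$-orbit, which is precisely the ambiguity responsible for the non-separatedness of the LW and SWS stacks. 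The key property of a proper stability condition is that each such orbit contains exactly one $(\alpha,\beta)$-stable point, forcing the two special fibres to agree and thus establishing separatedness.

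The main obstacle will be the uniqueness half of the valuative criterion: namely, verifying that a proper LW or SWS stability condition in the sense of Definition \ref{proper SWS stab} genuinely picks out a unique stable representative in each $\GG_m^n$-orbit of limits. This is a delicate combinatorial statement about the interaction of the numerical data $(\alpha,\beta)$ with the torus action on the space of expansions, and I expect it to be handled through the tropical description of the stability condition, where the choice of stable limit becomes the selection of a distinguished lattice point or cell in the relevant polyhedral decomposition.
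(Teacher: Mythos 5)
Your high-level plan (valuative criterion, transferring universal closure from the ambient stacks, uniqueness from the stability condition) runs parallel to the paper's, but the mechanism you describe is wrong in a way that matters. You attribute the non-separatedness of $\mathfrak{N}^m_{\LW}$ and $\mathfrak{N}^m_{\SWS}$ to infinitesimal automorphisms from the $\GG_m^n$-action, and you present $(\alpha,\beta)$-stability as selecting a unique stable point in each torus orbit. But the torus action (together with the base isomorphisms $\tau$ and $\rho$) is already quotiented out in forming $\mathfrak{X}'$, and LW and SWS stability by themselves guarantee finite automorphisms --- the paper's entire argument for the Deligne--Mumford property is that ``SWS and LW stability ensure finite automorphisms,'' with no input from $(\alpha,\beta)$ at all. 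The actual source of non-separatedness is the existence of several non-isomorphic \emph{associated pairs} for the same configuration of vertices in $\trop(X_0)$: a family of subschemes can acquire limits supported on genuinely different expansions of $X_0$ (a pure $\Delta_1$-component, a pure $\Delta_2$-component, or a mixed-type bubble, as in Figure \ref{assoc pairs}), and these lie in distinct equivalence classes of $\mathfrak{X}'$, not in a common $\GG_m^n$-orbit. Correspondingly, uniqueness of limits is immediate from the ``almost proper'' half of Definition \ref{proper SWS stab} --- for each tropical configuration there is exactly one $(\alpha,\beta)$-stable associated pair (Proposition \ref{generic limit}) --- rather than from the delicate orbit-combinatorics you defer to at the end of your sketch; that anticipated difficulty is not where the content lies.

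The genuine gap is in your existence argument. Extending the family in the universally closed ambient stack and then ``modifying the limit within its $\GG_m^n$-orbit'' cannot produce the required $(\alpha,\beta)$-stable extension, since an LW-stable limit and the distinguished associated pair are in general different expansions, not torus-translates of one another. More seriously, when the generic fibre $\cX_\eta$ is itself a modified special fibre, the unique $(\alpha,\beta)$-stable associated pair for $\trop(Z_\eta)$ may fail to be a degeneration of $\cX_\eta$ at all: this is exactly the failure exhibited in the paper's example of incompatible limits (Figure \ref{incompatible limit}), where an almost proper condition that lacks tropical compatibility yields a one-parameter family with \emph{no} stable limit, breaking universal closure. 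This is why Proposition \ref{proper stab proposition} isolates two independent conditions --- almost properness for existence and uniqueness of the associated pair over generic families, and tropical compatibility for universal closure of families degenerating from modified special fibres (Corollary \ref{corollary limits of special elements}) --- and indeed proves properness holds \emph{if and only if} both are imposed. Your sketch cites Definition \ref{proper SWS stab} but only ever uses its uniqueness half; without invoking tropical compatibility, the existence half of the valuative criterion fails and the proof as written does not go through.
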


\begin{theorem}
    For any $(\alpha,\beta)$ which defines a proper SWS stability condition, there is an isomorphism of stacks
    \[
    \mathfrak{N}^m_{\MR,(\alpha,\beta)}\cong \mathfrak{N}^m_{\PSWS,(\alpha,\beta)}.
    \]
\end{theorem}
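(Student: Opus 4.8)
The plan is to realise the isomorphism as a canonical identification of stable representatives, reducing the statement to a pointwise equivalence of the two stability conditions that is then promoted to an equivalence of categories fibred in groupoids. Both $\mathfrak{N}^m_{\MR,(\alpha,\beta)}$ and $\mathfrak{N}^m_{\PSWS,(\alpha,\beta)}$ are cut out by the datum $(\alpha,\beta)$ from the universally closed stacks $\mathfrak{N}^m_{\LW}$ and $\mathfrak{N}^m_{\SWS}$, which parametrise length $m$ subschemes in the same family $\mathfrak{X}'$ and, as shown earlier, have the same limits but differ only in which representative of a given limit they retain. The content of the theorem is thus that, once the proper refinement $(\alpha,\beta)$ is imposed, these two prescriptions select representatives that are canonically isomorphic.

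First I would recall the two conditions in parallel and make the combinatorics of the expansions explicit. An object over a geometric point is a length $m$ subscheme $Z$ inside an expansion $X[A,B]$, and both the LW/MR condition and the SWS condition are expressed as numerical inequalities controlling the distribution of the length of $Z$ among the components of $X[A,B]$, equivalently among the cones of the associated tropical model. Comparing the inequalities, the SWS-stable representative of a limit is supported on a (possibly finer) expansion related to its MR-stable counterpart by a canonical contraction within the stack of expansions $\mathfrak{C}'$. I would construct this contraction in families, obtaining a morphism $\mathfrak{N}^m_{\PSWS,(\alpha,\beta)} \to \mathfrak{N}^m_{\MR,(\alpha,\beta)}$.

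The main step, and the one I expect to be the principal obstacle, is to show that this contraction carries PSWS-stable objects to MR-stable objects and is a bijection on geometric points, i.e.\ that MR-$(\alpha,\beta)$-stability and PSWS-$(\alpha,\beta)$-stability are logically equivalent once the expansions are identified. Here the hypothesis that $(\alpha,\beta)$ is a \emph{proper} SWS stability condition (Definition \ref{proper SWS stab}) is essential: properness guarantees that each limit has a unique stable representative in either stack, so it suffices to match these unique representatives, and this reduces to a term-by-term comparison of the defining inequalities on each stratum of the expansion. The delicate point is that the inequalities need not be identical before contraction; one must verify that the components distinguished by the SWS condition but collapsed by the contraction are precisely those on which the MR inequalities are automatically satisfied, a check that is combinatorial and best carried out on the tropicalisation.

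Finally I would upgrade the pointwise statement to an isomorphism of stacks. The contraction is functorial in the base and hence defines a morphism of categories fibred in groupoids over $C$; it is compatible with the residual $\GG_m^n$-automorphisms, so it induces isomorphisms on all Isom-sheaves. To finish, I would construct its inverse explicitly as the canonical expansion reversing the contraction, with mutual invertibility on families following from the matching of stable representatives established above; this exhibits an equivalence of categories fibred in groupoids, hence an isomorphism of stacks over $C$. Alternatively, since the preceding theorem shows both stacks are proper and Deligne--Mumford over $C$, it suffices to check that the morphism is representable and bijective on geometric points with trivial relative automorphism groups, which the comparison of stability conditions supplies.
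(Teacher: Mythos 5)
Your central mechanism does not match the geometry of this construction, and the gap is not cosmetic. You posit that the PSWS-stable and MR-stable representatives of a limit live on different expansions, related by a canonical contraction inside $\mathfrak{C}'$, and you propose to build the comparison morphism by contracting components. In fact both stacks are cut out of $\mathfrak{N}^m_{\SWS}$ and $\mathfrak{N}^m_{\LW}$ by the \emph{same} $(\alpha,\beta)$-condition, and SWS stability implies LW stability on the same expansion (GIT stable points have finite stabilisers, giving the scheme-level inclusion $H^{m,[A,B]}_{\SWS,(\alpha,\beta)}\hookrightarrow H^{m,[A,B]}_{\LW,(\alpha,\beta)}$), so every PSWS-stable pair is \emph{literally} an MR-stable pair; the paper's morphism $f\colon \mathfrak{N}^m_{\PSWS,(\alpha,\beta)}\to\mathfrak{N}^m_{\MR,(\alpha,\beta)}$ is induced by this inclusion, nothing is blown down, and over the locus where all basis directions are invertible any purported contraction would have to be the identity. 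The genuinely nontrivial step is surjectivity of $f$ on geometric points, and the relevant move there is not a contraction but a change of representative within an equivalence class of $\mathfrak{X}'$: by Proposition \ref{isom of fibres induced by isoms of base} all representatives of a class have isomorphic underlying fibres, differing only in presentation (in particular in which bubbles carry the labels $\Delta_1^{(k)}$ versus $\Delta_2^{(n+1-k)}$), and the hypothesis that $(\alpha,\beta)$ is a proper SWS condition, combined with the GIT support criterion of Theorem \ref{stability theorem}, produces an SWS-stable representative for each MR-stable pair. That hypothesis does real work: by Proposition \ref{SWS stab 2} there exist LW-stable pairs with no SWS-stable representative whatsoever, so your claim that the two stabilities are ``logically equivalent once the expansions are identified'' is false objectwise and holds only at the level of equivalence classes, under the stated hypothesis. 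Relatedly, describing both conditions as numerical inequalities on the distribution of the length of $Z$ misstates the definitions used here: LW stability is smooth support plus finite automorphisms, and SWS stability is smooth support plus GIT stability for some $G$-linearisation, compared via the support criterion rather than a term-by-term comparison of inequalities on strata.

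Your concluding step is the one place you land near the paper: the paper never constructs an explicit inverse, but deduces that $f$ is an isomorphism from general criteria --- the bijection on isomorphism classes, a bijective homomorphism of stabilisers, finite inertia of the separated Deligne--Mumford source giving representability via Lemma 6 of \cite{AK}, and then an isomorphism criterion from \cite{CT}. Your fallback ending (representability plus bijectivity on geometric points and automorphism groups, using Theorem \ref{theorem choice proper DM}) is essentially that strategy. It cannot, however, rescue the proposal as written, because both your construction of the morphism and your pointwise matching rest on the contraction picture, which is not how the two stacks are related.
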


We would like to emphasize here that it is not true that for every stack $\mathfrak{N}^m_{\MR,(\alpha,\beta)}$ there exists an equivalent stack $\mathfrak{N}^m_{\PSWS,(\alpha,\beta)}$.

\medskip
These choices of proper substack distinguish some geometrically meaningful examples among all possible choices arising from the methods of Maulik and Ranganathan. Our examples do not recover every possible choice, because the geometric assumptions made in the construction of $\mathfrak{X'}$ mean that it does not contain all possible expansions of $X$. The reason for these assumptions is discussed in Section \ref{2nd BUs}. The stacks $\mathfrak{M}^m_{\LW}$ and $\mathfrak{M}^m_{\SWS}$ of \cite{CT} are special cases of $\mathfrak{N}^m_{\MR,(\alpha,\beta)}$ and $\mathfrak{N}^m_{\PSWS,(\alpha,\beta)}$ stacks.

\medskip
An important concern in the construction of these proper stacks is to understand what smoothings exist from one special element of the degeneration to another. Indeed, we must ensure that not only the generic elements of a family have unique limits, but also that special elements of these families with support in modifications of $X_0$ have unique limits as well. In order to show that this is the case, it is necessary to study what smoothings exist from one modification of $X_0$ to another. We give a tropical criterion to explain how to understand these smoothings in Section \ref{smoothings section}.

\medskip
Finally, we are able to show that all proper algebraic stacks constructed here provide good minimal models (see Section \ref{minimality section} for a precise definition of the term good minimal model). In particular, we prove the following result.

\begin{theorem}
    If $X\to C$ is a semistable type III degeneration of K3 surfaces equipped with a relative holomorphic symplectic logarithmic 2-form, then this 2-form induces a nowhere degenerate relative logarithmic 2-form on each of the constructions presented above.
\end{theorem}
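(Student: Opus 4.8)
The plan is to reduce the statement to two essentially local and classical inputs. First, that the given relative logarithmic symplectic form on $X/C$ restricts to a nondegenerate relative logarithmic $2$-form on the smooth locus of each component of every expansion appearing in the family $\mathfrak{X}'\to\mathfrak{C}'$; second, that the Fujiki--Beauville construction of the symplectic form on a Hilbert scheme of points on a symplectic surface is sufficiently functorial to produce, relatively over $\mathfrak{C}'$ and hence over $C$, a logarithmic $2$-form on each of the stacks $\mathfrak{N}^m_{\MR,(\alpha,\beta)}$ and $\mathfrak{N}^m_{\PSWS,(\alpha,\beta)}$. Since the form lives on the universal family $\mathfrak{X}'$, it suffices to build it there and transport it by the Hilbert scheme construction.

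First I would analyse how the form $\omega \in H^0(X,\Omega^2_{X/C}(\log X_0))$ behaves under the modifications defining the expansions. The expansions $X[A,B]$ are built by iterated blow-ups along the strata cut out by the logarithmic structure (the double curves and triple points, visible étale locally in the model $xyz=t$). Because each such centre is a logarithmic stratum, the pullback of $\omega$ remains a section of the relative logarithmic cotangent sheaf, and I would check in the local coordinates of the $xyz=t$ model that the residues of $\omega$ along the newly introduced boundary divisors are the expected ones, so that $\omega$ pulls back to a genuine relative logarithmic $2$-form $\omega'$ on $\mathfrak{X}'$ rather than a merely meromorphic one. Nondegeneracy is preserved on the smooth locus of each fibre because the map $T_{X/C}(-\log X_0)\to \Omega^1_{X/C}(\log X_0)$ induced by $\omega$ is an isomorphism, and this property is étale-local and compatible with blow-ups away from $\Sing(X_0)$.

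Next I would invoke the smoothly supported condition. Because every subscheme parametrised by the relevant stacks is supported in the smooth locus of a component of the expansion, the length $m$ subschemes lie, fibrewise, in a smooth surface carrying the logarithmic symplectic restriction of $\omega'$, so the classical argument applies: on the open dense locus of reduced subschemes the induced form is the descent of the symmetrised sum $\sum_i \pr_i^*\omega'$, and it extends across the diagonals because the Hilbert--Chow morphism is a local symplectic resolution. Carrying this out relatively over $\mathfrak{C}'$, and checking that the resulting form is $\GG_m^n$-invariant, lets it descend through the GIT presentation and glue to a relative logarithmic $2$-form on each construction.

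The main obstacle I anticipate is the bookkeeping of the logarithmic structure across the stack of expansions: one must confirm that the residues of $\omega'$ along the various boundary divisors of $\mathfrak{X}'$ are mutually compatible across the strata of $\mathfrak{C}'$, so that the induced forms glue to a single relative logarithmic $2$-form on the moduli stack rather than a collection defined only chart by chart. Establishing nowhere degeneracy at the boundary strata of $\mathfrak{N}^m$, where points collide or approach the double curves of an expansion, is the delicate geometric point; here I would argue that the local model near such a stratum is again a symplectic resolution fibred over a logarithmic symplectic base, so that nondegeneracy of $\omega'$ on the smooth locus propagates to nondegeneracy of the induced form, completing the proof.
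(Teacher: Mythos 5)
Your overall strategy --- pull the relative logarithmic symplectic form back to the expansions, symmetrise, descend through the group action and extend across the diagonals --- is in substance the argument the paper delegates to the literature: the paper first proves that the stacks are semistable over $C$ (Proposition \ref{semistable proposition}, via Lemma 7.3 of \cite{GHHZ}) and then cites Section 7.2 of \cite{GHHZ} for the induced nowhere degenerate logarithmic 2-form, with the key nonvanishing input supplied by its good-minimal-model theorem and Corollary \ref{trivial can bun corollary}. Note also a globalisation step you omit: over an actual type III family, as opposed to the étale local model, the constructions are only defined after choosing a labelling of the triangulated-sphere dual complex by $Y_1,Y_2,Y_3$ and blowing up the unions $Y_{(i)}$ (Section \ref{minimality}); this is part of what ``the constructions presented above'' means in the statement.

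There is, however, a genuine gap at the crux. Your first paragraph establishes nondegeneracy of $\omega'$ only ``away from $\Sing(X_0)$'', but the locus that matters is exactly the other one: the $\Delta$-components and bubbles inserted by the blow-ups lie over $\Sing(X_0)$, and LW/SWS stability forces the subschemes to be supported in the smooth loci of the expanded fibres, which include precisely the interiors of those bubbles. Your second paragraph then asserts that the subschemes sit fibrewise in smooth surfaces ``carrying the logarithmic symplectic restriction of $\omega'$'' --- invoking nondegeneracy at points where it has not been proved --- and the closing appeal to ``a symplectic resolution fibred over a logarithmic symplectic base'' does not supply it. The repair is standard and makes the residue bookkeeping you propose unnecessary: $X\times_{\AA^1}\AA^{n+1}$ is the base change of the log smooth family $X\to \AA^1$, so relative logarithmic differentials are pulled back, and $b\colon X[A,B]\to X\times_{\AA^1}\AA^{n+1}$ is a logarithmic modification, hence log étale; therefore $\Omega^1_{X[A,B]/C[A,B]}(\log)\cong \pi^*\Omega^1_{X/C}(\log X_0)$, and $\pi^*\omega$ is nondegenerate as a relative logarithmic form on \emph{every} component of every fibre, bubbles included (equivalently, one checks in the $xyz=t$ chart that the induced form on a bubble is $d\log u\wedge d\log v$ in toric coordinates). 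Relatedly, ``extends across the diagonals because Hilbert--Chow is a local symplectic resolution'' is circular as a proof of nowhere degeneracy along the exceptional locus: the clean argument, which is the one run in \cite{GHHZ} and mirrored in the paper's canonical-bundle computation on the loci $W^m_*$ and $W^{[m]}_*$, is Beauville's --- extend the form over the codimension-2 complement of the locus of subschemes with at most one double point, verify nondegeneracy there in the explicit $A_1$ local model, and conclude because $\wedge^{2m}$ of the extended form is a section of the relative log canonical bundle, trivial by Corollary \ref{trivial can bun corollary}, nonvanishing off codimension 2 and hence nowhere vanishing. Your proposal never connects the extension step to this triviality, which is where the paper's good-minimal-model theorem actually enters the proof.
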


\subsubsection*{Background.}
Previous work in this area includes that of Li \cite{Li}, in which the method of expanded degenerations is introduced, and Li and Wu \cite{LW}, where it is used to construct modular compactifications for degenerations of Quot schemes relative to simple normal crossing boundaries with smooth singular locus. Following on from this, Gulbrandsen, Halle and Hulek \cite{GHH} present a GIT version in the case of Hilbert schemes of points, and show the stack quotient of their GIT quotient to be equivalent to the proper Deligne-Mumford stack of Li and Wu. These solutions provide a foundation for our work, but are limited by the requirement that the special fibre of $X\to C$ should have smooth singular locus. This is quite a serious restriction, as not many families will satisfy this condition. Note that the type of singularity we consider in this work is specifically the type that \cite{LW} and \cite{GHH} do not address.

In more recent work, Maulik and Ranganathan \cite{MR} use these techniques of expansions in the world of logarithmic and tropical geometry to solve this problem for Hilbert schemes over $X$ where $X_0$ may have any type of simple normal crossing singularity. This is a significant improvement, as their solution can be applied to any semistable family of surfaces $X$ and these can always be obtained by semistable reduction. Their construction, however, yields an infinite family of birational solutions, but no explicit model. Moreover, it is far from obvious how such an explicit model should be constructed. This is the problem we solve in \cite{CT} and in this paper.

\subsubsection*{Organisation.}
We will start by giving a brief overview of the results of \cite{CT} and by setting up the tropical point if view in Section \ref{background section}. In Section \ref{2nd construction}, we construct the expanded degenerations $X[A,B]$ and extend the group action, line bundle and GIT discussions from \cite{CT} to this context. Then, in Section \ref{stack section} we define the stack of expansions $\mathfrak{C}'$, the family $\mathfrak{X}'$ over $\mathfrak{C}'$. We describe how LW and SWS stability extend to this context, define the corresponding stacks $\mathfrak{N}^m_{\LW}$ and $\mathfrak{N}^m_{\SWS}$ of stable subschemes on $\mathfrak{X}'$ and show that these are universally closed but not separated. In Section \ref{separated stack section}, we discuss different methods for constructing separated stacks, as well as a tropical criterion for understanding smoothings between modifications of $X_0$. Finally, we discuss how all these constructions relate to the minimal model program in Section \ref{minimality section}.

\subsubsection*{Acknowledgements.}
I am grateful to Gregory Sankaran for his support and encouragement during this project. I would also like to thank Dhruv Ranganathan, Alastair Craw, Patrick Kennedy-Hunt and Thibault Poiret for all their help, as well as Siao Chi Mok and Terry Song for many interesting conversations. I am also grateful to Christian Lehn, Martin Ulirsch, Grégoire Menet and Grzegorz Kapustka for their useful suggestions. This work was undertaken while funded by the University of Bath Research Studentship Award and completed while funded under the grant Narodowe Centrum Nauki 2018/30/E/ST1/00530 at the Jagiellonian University in Krakow.

\section{Background}\label{background section}

In this section we recall the main results of \cite{CT} and the setup of the tropical perspective which we will refer to throughout this paper.

\subsubsection*{The semistable family of surfaces.}
Let $X\to C$ be a flat projective semistable family of surfaces over a curve isomorphic to $\AA^1$, and we consider an étale local model $\Spec k [x,y,z,t]/(xyz-t)\to \Spec k[t]$ for this family, where $k$ is an algebraically closed field of characteristic zero. We denote by $X_0$ the special fibre over $0\in C$ and by $Y_1$, $Y_2$ and $Y_3$ the irreducible components of this special fibre given étale locally by $x=0, y=0$ and $z=0$ respectively.

\subsection{The previous construction}\label{previous construction section}

In \cite{CT}, we start by constructing expanded degenerations $X[n]$ given by a sequence of blow-ups on a fibre product $X\times_{\AA^1}\AA^{n+1}$ over an expanded base $C[n]\coloneqq C\times_{\AA^1}\AA^{n+1}$. This construction is similar to those presented in \cite{LW} and \cite{GHH}, but additional blow-ups are added, breaking the symmetry of the construction, to allow for all necessary modifications for our problem to be included, while preserving commutativity of the blow-up morphisms. This family $X[n]$ comes with a natural torus action by the group $\GG_m^n$ and is shown to embed into a product of projective bundles. This allows us to define various $\GG_m^n$-linearised ample line bundles on the relative Hilbert scheme of $m$ points of this expanded degeneration $\Hilb^m(X[n]/C[n])$ and describe the corresponding GIT stable loci.

We then construct a stack of expansions $\mathfrak{C}$ and family $\mathfrak{X}$ over it. This is a substack of the stack $\mathfrak{X}'$ defined in Section \ref{family over stack of expansions}. We describe how \textit{Li-Wu stability} (abbreviated LW stability) can be extended to this setting and define an alternative notion of stability, called \textit{smoothly supported weak strict stability} (abbreviated SWS stability), derived from GIT stability conditions. We then construct the stacks $\mathfrak{M}^m_{\LW}$ and $\mathfrak{M}^m_{\SWS}$ of LW and SWS stable length $m$ zero-dimensional subschemes on $\mathfrak{X}$. The main results of \cite{CT} are the following.
\begin{theorem}
    The stacks $\mathfrak{M}^m_{\LW}$ and $\mathfrak{M}^m_{\SWS}$ are Deligne-Mumford and proper.
\end{theorem}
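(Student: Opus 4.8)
The plan is to treat the two properties separately and to reduce properness to the valuative criterion. First I would establish algebraicity: both $\mathfrak{M}^m_{\LW}$ and $\mathfrak{M}^m_{\SWS}$ are substacks of the relative Hilbert scheme $\Hilb^m(\mathfrak{X}/\mathfrak{C})$ over the stack of expansions $\mathfrak{C}$, so it suffices to check that each stability condition cuts out an open locus; since both conditions are defined by the position of the support relative to the exceptional divisors, they are open, and the stacks are algebraic and of finite type (the latter from boundedness of $\Hilb^m$). For the Deligne--Mumford property I would show that every stable object has finite automorphism group. The only potential source of positive-dimensional automorphisms is the $\GG_m^n$-action on the expansion $X[n]$, and both LW and SWS stability are engineered precisely to force the stabilizer of a stable subscheme in $\GG_m^n$ to be finite; working in characteristic zero, finite automorphism groups are automatically reduced, so the diagonal is unramified and the stacks are Deligne--Mumford.

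For properness it then remains to verify the valuative criterion over a discrete valuation ring $R$ with fraction field $K$. The existence of limits (universal closedness) is the core of the argument: starting from a family of length $m$ subschemes over $\Spec K$, I would, after a finite base change of $R$, produce an expansion $X[n]$ together with a stable limit supported in its smooth locus. Here the combinatorial structure of the expansions is essential --- one reads off from the $K$-family how its support degenerates towards $\Sing(X_0)$, chooses the modification that separates the colliding points into distinct exceptional components, and verifies that the resulting flat limit is both smoothly supported and stable. For separatedness I would take two stable $R$-limits of the same $K$-family and show they agree: stability pins down the expansion up to the torus action and then the subscheme up to a finite ambiguity, forcing the two limits to be isomorphic.

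The step I expect to be the main obstacle is the existence half of the valuative criterion --- guaranteeing that a smoothly supported stable limit can always be found after an appropriate expansion and base change. For the SWS condition this becomes tractable by exploiting its GIT origin: I would identify $\mathfrak{M}^m_{\SWS}$ with a stack quotient of the GIT-semistable locus in $\Hilb^m(X[n]/C[n])$ by $\GG_m^n$, for a linearisation chosen so that semistability coincides with stability, whence properness and finiteness of stabilizers follow from the standard theory of GIT quotients and yield the Deligne--Mumford and proper conclusions simultaneously. For the LW condition I would either run the valuative criterion directly, adapting the argument of Li and Wu \cite{LW} to the transverse triple-point singularity treated here, or first establish an isomorphism $\mathfrak{M}^m_{\LW}\cong\mathfrak{M}^m_{\SWS}$ and transport both conclusions across it.
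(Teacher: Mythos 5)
Your overall skeleton is the same as the paper's (the theorem is quoted from \cite{CT}, and the route there, echoed in this paper's Proposition \ref{univ closed 2}, is exactly: Deligne--Mumford via finiteness of the $\GG_m^n$-stabilizers forced by stability; universal closedness via the valuative criterion, with a finite base change chosen so that the tropical data of the family becomes integral and an explicit element $(t_1,\dots,t_{n+1})=(f_1u^{g_1},\dots,f_{n+1}u^{g_{n+1}})$ determining the expansion; separatedness from the rigidity of the allowed expansions). However, your GIT shortcut for $\mathfrak{M}^m_{\SWS}$ has a genuine gap. SWS stability is \emph{not} the (semi)stable locus of a single linearisation: it is defined as the union over \emph{all} $G$-linearised ample bundles of the GIT \emph{stable} loci, together with the separate requirement of smooth support. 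In this triple-point setting, unlike in \cite{GHH}, GIT stability does not imply smooth support: Theorem \ref{stability theorem} characterises stability purely by the support meeting the interiors $(\Delta_1^{(k)})^\circ\cup(\Delta_2^{(n+1-k)})^\circ$ for every $k$, which is compatible with part of the support lying in the double locus of the fibre. So even granting a linearisation with stable $=$ semistable, projectivity of the GIT quotient over $C$ (which does hold, since the $G$-invariants of $k[t_1,\dots,t_{n+1}]$ are generated by $t_1\cdots t_{n+1}$) gives properness of a space strictly larger than $\mathfrak{M}^m_{\SWS}$; the smoothly supported locus is \emph{open} in it, and properness does not pass to open substacks. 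One must still prove that limits can always be arranged to be smoothly supported after a further expansion, which is precisely the hard content of the valuative argument you hoped to bypass.

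Two smaller points. First, your fallback of proving $\mathfrak{M}^m_{\LW}\cong\mathfrak{M}^m_{\SWS}$ first and transporting properness is circular relative to the paper's logic: the isomorphism is established (via the representability criterion of \cite{AK}) only \emph{after} both stacks are known to be separated Deligne--Mumford, since finite inertia of the target is an input. Second, your separatedness step asserts that ``stability pins down the expansion up to the torus action''; this is true for $\mathfrak{M}^m_{\LW}$ and $\mathfrak{M}^m_{\SWS}$, but only because of the restricted, symmetric shape of $X[n]$, in which $Y_1$ and $Y_2$ are blown up along \emph{every} basis direction, so that the tropical data $(e_{i,1},e_{i,2},e_{i,3})$ of the family admits a unique associated pair. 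The present paper shows this uniqueness fails as soon as more general expansions are allowed (the stacks $\mathfrak{N}^m_{\LW}$ and $\mathfrak{N}^m_{\SWS}$ are universally closed but not separated), so in a complete write-up this step needs the explicit classification of stable extensions rather than a general stability principle.
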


\begin{theorem}
    There is an isomorphism of stacks
    \[
    \mathfrak{M}^m_{\LW}\cong \mathfrak{M}^m_{\SWS}.
    \]
\end{theorem}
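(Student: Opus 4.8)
The plan is to prove the isomorphism by showing that the two stability conditions single out exactly the same objects, so that the identity on the moduli problem of length $m$ zero-dimensional subschemes on $\mathfrak{X}$ restricts to mutually inverse morphisms. Both $\mathfrak{M}^m_{\LW}$ and $\mathfrak{M}^m_{\SWS}$ are built as substacks of the ambient fibred category parametrising families of length $m$ zero-dimensional subschemes on $\mathfrak{X}$ over the category of $C$-schemes, cut out respectively by LW stability and by SWS stability. Since the ambient moduli problem, the base, the stack of expansions $\mathfrak{C}$ and the universal family $\mathfrak{X}$ are common to both, it suffices to establish that a family of such subschemes is LW stable if and only if it is SWS stable. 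Granting this equivalence, the two substacks coincide as subcategories of one and the same fibred category, and the inclusions in both directions are inverse isomorphisms.

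First I would reduce the comparison to geometric points together with a compatibility check on the $\GG_m^n$-action encoding automorphisms of the expansion. Both conditions are open and, by the preceding theorem, both stacks are already Deligne--Mumford and proper; hence it is enough to match the two notions on closed points of $\mathfrak{X}$ lying over the special fibre, and then to verify that the identification is stable under base change and compatible with the morphisms recording isomorphisms of expansions. Concretely, for a length $m$ subscheme $Z$ supported on an expansion $X[n]$, I would record the combinatorial datum of which components of the expansion meet the support of $Z$ and with which lengths, since this datum is what both stability conditions ultimately constrain.

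I would then unwind the two definitions against this datum. LW stability requires $Z$ to be smoothly supported and demands that every non-rigid (bubble) component of $X[n]$ carry enough of the support of $Z$ to force the $\GG_m^n$-scaling to have finite stabiliser, thereby killing the continuous automorphisms. SWS stability is instead phrased through the Hilbert--Mumford numerical criterion for the $\GG_m^n$-linearised ample line bundle on $\Hilb^m(X[n]/C[n])$ coming from the embedding into the product of projective bundles. The core step is to evaluate the Hilbert--Mumford weights of the destabilising one-parameter subgroups of $\GG_m^n$ and to show, component by component along the expansion, that positivity of all these weights is precisely equivalent to the smooth-support-plus-finite-stabiliser requirement of LW stability, including the cases where the support of $Z$ distributes across several bubbles.

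The main obstacle will be this equivalence on the boundary, where the two conditions could a priori diverge. The \emph{weak strict} qualifier in SWS is exactly what is tailored to handle configurations sitting on the strictly semistable locus of the naive GIT problem, and I expect the delicate point to be checking that the chosen linearisation makes the weak-strict GIT condition numerically coincide with LW's finiteness-of-automorphisms condition on each bubble, with no spurious stable or unstable configurations appearing on either side. Once the weight computation is matched to the LW conditions on every stratum of the expansion, the equivalence of objects follows; verifying its functoriality in families then upgrades it from a bijection on objects to an isomorphism of the fibred categories, which is the asserted isomorphism $\mathfrak{M}^m_{\LW}\cong\mathfrak{M}^m_{\SWS}$.
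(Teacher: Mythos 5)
Your proposal rests on the claim that LW and SWS stability ``single out exactly the same objects'', so that the two substacks coincide and the identity gives the isomorphism. This is the genuine gap: the two conditions do \emph{not} coincide pointwise on a fixed representative, and the paper's argument (given here for the generalisation $\mathfrak{N}^m_{\PSWS,(\alpha,\beta)}\cong \mathfrak{N}^m_{\MR,(\alpha,\beta)}$ in Section \ref{2nd isomorphism of stacks}, following the same route as [CT]) is built precisely around this asymmetry. One direction is easy: GIT stable points have finite stabilisers, so at the scheme level there is an inclusion $H^m_{\SWS}\subset H^m_{\LW}$, and this is in general strict. The converse holds only \emph{up to the equivalences of the stack}: by the criterion of Theorem \ref{stability theorem}, GIT stability for some linearisation requires support meeting $(\Delta_1^{(k)})^\circ\cup(\Delta_2^{(n+1-k)})^\circ$ for \emph{every} $k$, whereas LW stability only constrains the expanded components of the given representative (note that $\Delta_1^{(j)}$ and $\Delta_2^{(n+1-j)}$ share a single $\GG_m$, so an empty expanded bubble need not create automorphisms). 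An LW stable pair is therefore only SWS stable after replacing it by a suitable equivalent representative; Proposition \ref{SWS stab 2} shows that on the larger stack $\mathfrak{X}'$ even this can fail, and the special geometry of $X[n]$ is exactly what guarantees that on $\mathfrak{X}$ an SWS stable representative always exists. So your proposed Hilbert--Mumford computation cannot deliver the componentwise equivalence ``with no spurious configurations on either side'' that your identity-functor mechanism needs; what it delivers is the support criterion above, which is strictly stronger than LW stability on a fixed expansion.

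Consequently the correct mechanism is not the identity but the morphism of stacks $f\colon \mathfrak{M}^m_{\SWS}\to\mathfrak{M}^m_{\LW}$ induced by the scheme-level inclusion, and the substance of the proof is showing $f$ is an isomorphism: essential surjectivity (every LW stable class contains an SWS stable representative, via Theorem \ref{stability theorem}), a bijective homomorphism of stabiliser groups, and then a representability argument -- the target is separated Deligne--Mumford, hence has finite inertia, so bijectivity on stabilisers makes $f$ representable by Lemma 6 of [AK], and an isomorphism by the criterion of [CT, Lemma 6.2.1]. Your proposal omits this entire stack-theoretic layer; ``verifying functoriality in families'' does not substitute for it, since a bijection on closed points together with properness of both stacks is not sufficient to conclude an isomorphism of stacks. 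The weight analysis you sketch is relevant -- it is how the stability criterion is proved -- but it must feed into the essential-surjectivity step rather than into a pointwise identification of the two stable loci.
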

Crucially, the choices of blow-ups allowed in the construction of $X[n]$ are very restricted. This has the surprising consequence that the stacks of LW and SWS stable subschemes $\mathfrak{M}^m_{\LW}$ and $\mathfrak{M}^m_{\SWS}$ are already proper without the need to add any additional stability condition. The work of Maulik and Ranganathan \cite{MR} predicts that in general this will not be the case. Indeed, we will see in Section \ref{separated stack section} that when allowing for different choices of expanded degenerations we will need to introduce an additional stability condition in order to obtain separated stacks over $C$. This stability condition is analogous to the Donaldson-Thomas stability of \cite{MR}.

\subsection{Tropical perspective}

Throughout this work, the results will be presented from a geometric and tropical point of view. Indeed, the key property that we would like our compactifications of $\Hilb^m(X^\circ/C^\circ)$ to possess can be thought of as the property that the limits of the families of length $m$ zero-dimensional subschemes in the compactification should be stratified in a way which records the data of these families. In étale local coordinates, this is essentially asking for the compactifications to encode the degrees of vanishing of these families in the variables $x,y$ and $z$. Tropical geometry is a natural tool to use to study such questions, as it is precisely the object parameterising such data.

\medskip
We recall here the essential details of the tropical perspective (see \cite{CT} and \cite{MR} for more details). We construct the tropicalisation of $X$ with respect to the divisorial logarithmic structure given by the divisor $X_0$. Formally, this means that we define a sheaf of monoids
\[
\cM_X(U)\coloneqq \{f\in \cO_X(U) \ | \ f|_{U\setminus X_0} \in \cO^*_X(U\setminus X_0)\},
\]
for any open set $U\subseteq X$, and the corresponding \emph{characteristic sheaf}
\(
\Bar{\cM}_X \coloneqq \cM_X/\cO^*_X.
\)
The \emph{tropicalisation} of $X$ is then given by $\trop(X)\coloneqq \colim_{x\in X}(\Bar{\cM}_{X,x})^\vee$. We recall also the following definition.

\begin{definition}
    Let $\Upsilon$ be a fan, let $|\Upsilon|$ be its support and $\upsilon$ be a continuous map
    \[
    \upsilon\colon |\Upsilon| \longrightarrow \Sigma_X
    \]
    such that the image of every cone in $\Upsilon$ is contained in a cone of $\Sigma_X$ and that is given by an integral linear map when restricted to each cone in $\Upsilon$. We say that $\upsilon$ is a \textit{subdivision} if it is injective on the support of $\Upsilon$ and the integral points of the image of each cone $\tau\in\Upsilon$ are exactly the intersection of the integral points of $\Sigma_X$ with $\tau$.
\end{definition}

\begin{definition}
    The subdivision
\(
\Upsilon \longhookrightarrow \Sigma_X \longhookrightarrow \RR^r_{\geq 0}
\)
has an associated toric variety $\AA_{\Upsilon}$, which comes with a $\GG^r_m$-equivariant birational map $\AA_\Upsilon \to \AA^r$. There is then an induced morphism of quotient stacks
\(
[\AA_\Upsilon/\GG_m^r] \longrightarrow [\AA^r/\GG_m^r].
\)
We define the \textit{expansion} of $X$ associated to such a subdivision $\Upsilon$ to be the modification
\[
X_\Upsilon \coloneqq X \times_{[\AA^r/\GG_m^r]} [\AA_\Upsilon/\GG_m^r].
\]
\end{definition}

In the étale local model which we consider, the functions vanishing at $X_0$ are $x,y$ and $z$, therefore we may represent $\trop(X)$ as a fan in $\RR_{\geq 0}^3$ given by the positive orthant and its faces. For convenience, we will 
denote by $\trop(X_0)$ the hyperplane slice through $\trop(X)$ resulting from fixing a height in this three-dimensional cone. Recall from \cite{CT} that this choice of height is equivalent to a choice of point in the half-line corresponding to the tropicalisation of $\AA^1$ (where this tropicalisation is taken with respect to the divisor $0\in \AA^1$). Changing the height corresponds to making some base change on the family $X$.
Figure \ref{geom and trop special fibre} shows a copy of the special fibre $X_0$ both from the geometric point of view, on the left, and tropical point of view, on the right. We will abuse notation slightly and label the vertices, edges and interior of the triangle $\trop(X_0)$ by $Y_i$, $Y_i\cap Y_j$  and $Y_1\cap Y_2\cap Y_3$ again for convenience, as shown in Figure \ref{geom and trop special fibre}. Subdivisions of the tropicalisation will be represented by adding edges and vertices to this triangle, as we will see in the following section.

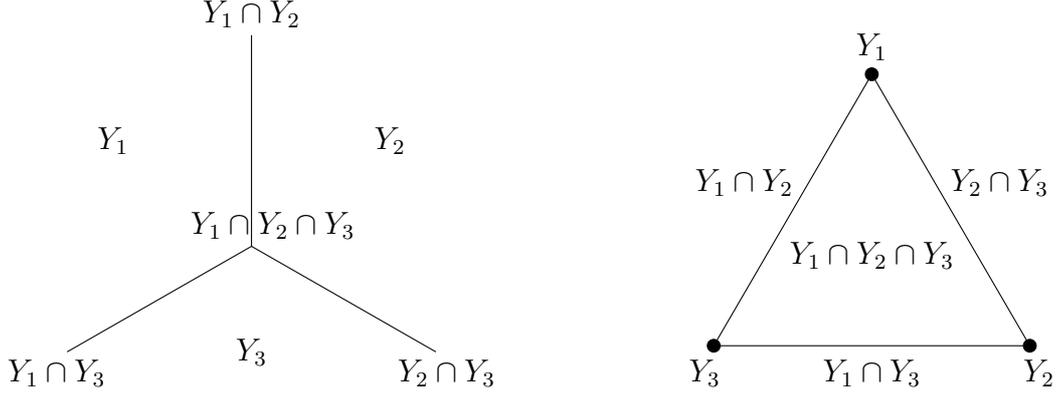
\begin{figure} 
    \begin{center}   
    \begin{tikzpicture}[scale=1.4]
        \draw   
        (0,0) -- (0,2)       
        (-1.732, -1) -- (0,0)
        
        (1.732, -1) -- (0,0);
        \draw (-1.3,1) node[anchor=center]{$Y_1$};
        \draw (1.3,1) node[anchor=center]{$Y_2$};
        \draw (0,-1) node[anchor=center]{$Y_3$};
        \draw (0,2.2) node[anchor=center]{$Y_1\cap Y_2$};
        \draw (-1.832, -1.2) node[anchor=center]{$Y_1\cap Y_3$};
        \draw (1.832, -1.2) node[anchor=center]{$Y_2\cap Y_3$};
        \draw (0.2, 0.2) node[anchor=center]{$Y_1\cap Y_2 \cap Y_3$};
    \end{tikzpicture}
    \hspace{2cm}
    \begin{tikzpicture}[scale=1.2]
\draw   

        (-1.732, -1) -- (0,2)       
        (-1.732, -1) -- (1.732, -1)
        
        (1.732, -1) -- (0,2);
        \draw (-1.4,0.8) node[anchor=center]{$Y_1\cap Y_2$};
        \draw (1.4,0.8) node[anchor=center]{$Y_2\cap Y_3$};
        \draw (0,-1.3) node[anchor=center]{$Y_1\cap Y_3$};
        \filldraw[black] (0,2) circle (2pt) ;
        \filldraw[black] (-1.732, -1) circle (2pt) ;
        \filldraw[black] (1.732, -1) circle (2pt) ;
        \draw (0,2.3) node[anchor=center]{$Y_1$};
        \draw (-1.832, -1.3) node[anchor=center]{$Y_3$};
        \draw (1.832, -1.3) node[anchor=center]{$Y_2$};
        \draw (0, 0) node[anchor=center]{$Y_1\cap Y_2 \cap Y_3$};
\end{tikzpicture}
    \end{center}
    \caption{Geometric and tropical pictures of the special fibre $X_0$.}
    \label{geom and trop special fibre}
\end{figure}

Finally, we consider the \emph{tropicalisation map}, as defined in \cite{MR}. This is a morphism
\[
\trop \colon X^\circ \longrightarrow \Sigma_X.
\]
For an open subscheme $Z^\circ\subset X^\circ$, we denote by $\trop(Z^\circ)$ the image of the map $\trop$ restricted to $Z^\circ(\cK)$, where $\mathcal{K}$ is a valued field extending $k$. This image $\trop(Z^\circ)$ can be seen as a collection of rays in $\trop(X)$ or vertices in $\trop(X_0)$. This is the basic data around which the expansions are built.

\section{The expanded constructions}\label{2nd construction}
\subsection{Scheme construction}\label{2nd BUs}

We construct expanded degenerations as schemes $X[A,B]$ depending on a choice of sets $A$ and $B$. This forms a generalisation of the schemes $X[n]$ presented in \cite{CT}. We recall some of the notation and terminology from \cite{CT} here for convenience.

\subsubsection*{Enlarging the base.} Take a copy of $\AA^{n+1}$, with elements labelled $(t_1, \ldots, t_{n+1}) \in \AA^{n+1}.$ To match our previous terminology, we refer to the entries $t_i$ as \textit{basis directions}. We then take a fibre product $X\times_{\AA^1} \AA^{n+1}$ given by the map $X\to C\cong \AA^1$ and the product
\[
(t_1, \ldots, t_{n+1}) \longmapsto t_1\cdots t_{n+1}.
\]
This fibre product contains several copies of the special fibre $X_0$. We will now be able to make different modifications of this fibre by taking a sequence of blow-ups on $X\times_{\AA^1} \AA^{n+1}$ along $Y_1$ and $Y_2$ and the vanishing of some basis directions. Unlike the construction of \cite{CT}, we do not require here that both $Y_1$ and $Y_2$ should be blown up along all basis directions. Instead, the choice of blow-ups will be dictated by a choice of sets $A$ and $B$.

\subsubsection*{The sets $A$ and $B$.}
For geometric reasons explained in Remark \ref{remark about unbroken condition}, we will consider blow-ups arising only from sets $A$ and $B$ satisfying the following condition.

\begin{definition}\label{unbroken}
    Let $A$ and $B$ be subsets of $[n+1]\coloneqq \{1,\dots,n+1\}$. We say that the pair $(A,B)$ is \emph{unbroken} if there are real intervals $[1,a)$ and $(b,n+1]$ with $1<b<a<n+1$ such that $A = \NN\cap [1,a)$ and $B= \NN \cap (b,n+1]$. We will often write $(A,B,n)$ for convenience when discussing these unbroken pairs; this is done in order to keep track of the value of $n$, although this value is determined by the sets $A$ and $B$.
\end{definition}

In other words, if $(A,B)$ is an unbroken pair, then elements of $[n+1]$ in order from 1 to $n+1$ are contained first in $A\setminus B$, then in $A\cap B$ and then in $B\setminus A$, where $A\cap B$ may be empty but the other two are not. In particular, $A$ is forced to contain 1 but not $n+1$ and, similarly, $B$ must contain $n+1$ but not 1. See Remark \ref{remark unbroken trop} for a tropical description of the unbroken condition.

\subsubsection*{The blow-ups.}
Let $(A,B,n)$ be an unbroken pair. In the étale local model, the blow-ups on $X\times_{\AA^1}\AA^{n+1}$ are expressed as blow-ups in the ideals
\begin{align*}
    \langle x,t_{1} \rangle, \langle x,t_{1}t_{2} \rangle ,\dots ,\langle x,t_{1}t_{2}\cdots t_{\lfloor a \rfloor} \rangle
\end{align*}
and the ideals
\begin{align*}
    \langle y,t_{n+1} \rangle, \langle y,t_{n+1}t_{n} \rangle ,\dots ,\langle y,t_{n+1}t_{n}\cdots t_{\lceil b \rceil} \rangle.
\end{align*}
Globally on $X$, these are blow-ups along the vanishing equations of the components $Y_1$ and $Y_2$. The equations of the blow-ups in étale local coordinates are given by
\begin{align}\label{BU1 eqns}
    &x_0^{(1)}t_1 = xx_1^{(1)}, \nonumber \\
    &x_1^{(k-1)}x_0^{(k)}t_k = x_0^{(k-1)}x_1^{(k)}, \qquad \textrm{ for } \ 2\leq k\leq n, \nonumber \\
    &x_0^{(\lfloor a \rfloor)} yz = x_1^{(\lfloor a \rfloor)} t_{\lfloor a \rfloor+1} \cdots t_{n+1}. \\
    &y_0^{(1)}t_{n+1} = yy_1^{(1)}, \nonumber \\
    &y_1^{(k-1)}y_0^{(k)}t_{n+2-k} = y_0^{(k-1)}y_1^{(k)} \ \textrm{ for } \ 2\leq k\leq n, \nonumber \\
    &y_0^{(\lceil b \rceil)} xz = y_1^{(\lceil b \rceil)} t_{1} \cdots t_{\lceil b \rceil-1} \nonumber 
\end{align}
and we get
\[
x_0^{(k)} y_0^{(n+1-k)} z = x_1^{(k)} y_1^{(n+1-k)}
\]
when $k\in A$ and $k+1\in B$.

\medskip
\emph{Expanded degeneration notation.} We denote the resulting space after making these blow-ups by $X[A,B]$ and write $C[A,B]$ to refer to the base $C\times_{\AA^1}\AA^{n+1}$ together with the data of the sets $A$ and $B$. This is the same scheme as $C[n]$ from \cite{CT}, but here, for each basis direction $t_i$, we retain the additional information of whether $i$ belongs to the set $A$ or $B$. Let
\[b\colon X[A,B] \longrightarrow X\times_{\AA^1} \AA^{n+1}\]
denote the sequence of blow-ups and
\[
\pi \colon X[A,B] \longrightarrow X
\]
denote the natural projection.
If $A=\{ 1,\dots, n \}$ and $B=\{ 2,\dots, n+1 \}$, then we get back exactly the space $X[n]$ of \cite{CT}.

\medskip
We fix the notation $[k]\coloneqq \{1,\dots, k\}$. Let $|A| = l_A$, $|B| = l_B$ and let
\[
\iota_A\colon [l_A] \longrightarrow A \quad \textrm{and} \quad \iota_B\colon [l_B] \longrightarrow B
\]
be the order preserving morphisms. Then for $i\leq l_A$ and $j\leq l_B$, we define $X_{(i,j)}$ to be the space resulting from having blown up $X\times_{\AA^1} \AA^{n+1}$ along the pullback of $Y_1$ and the vanishing of $t_{\iota_A(k)}$ for all $k\in [i]$, and the pullback of $Y_2$ and the vanishing of $t_{\iota_B(k)}$ for all $k\in [j]$. For now, we may assume that the blow-ups were made in that order, though by the second point of Proposition \ref{properties of BUs} this order does not actually affect the outcome. Let
\begin{align*}
    &\beta^A_{(i,j)} \colon X_{(i,j)} \longrightarrow X_{(i-1,j)}, \\
    &\beta^B_{(i,j)} \colon X_{(i,j)} \longrightarrow X_{(i,j-1)},
\end{align*}
denote the morphisms corresponding to each individual blow-up. We therefore have the equality
\[
\beta^B_{(l_A,l_B)}\circ \cdots \circ \beta^B_{(l_A,1)}\circ \beta^A_{(l_A,0)}\circ \cdots \circ \beta^A_{(1,0)} = b.
\]

\begin{proposition}\label{properties of BUs}
    The expanded degeneration $X[A,B] \to C[A,B]$ satisfies the following properties.
    \begin{enumerate}
        \item The local model for $X[A,B]$ embeds into $(X\times_{\AA^1} \AA^{n+1})\times (\PP^1)^{\lfloor a \rfloor + \lceil b \rceil}$.
        \item The order of the blow-up morphisms $\beta^A_{(i,j)}$ and $\beta^B_{(i,j)}$ commutes.
        \item The morphism $X[A,B] \to C[A,B]$ is projective.
    \end{enumerate}
\end{proposition}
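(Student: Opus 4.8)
The plan is to establish the embedding of the first point by hand and then to deduce the commutativity and the projectivity from it together with standard facts about blow-ups. For the first point I would argue inductively along the sequence of morphisms $\beta^A_{(i,j)}$ and $\beta^B_{(i,j)}$. The essential feature of the construction is that every center being blown up, namely (the total transform of) $\langle x, t_1\cdots t_{\iota_A(k)}\rangle$ along $Y_1$ or $\langle y, t_{n+1}\cdots t_{\iota_B(k)}\rangle$ along $Y_2$, is generated by \emph{two} elements $f,g$. For such a center the blow-up $\Bl_{\langle f,g\rangle} Z$ is the closed subscheme of $Z\times\PP^1$, with $\PP^1=\Proj k[u_0,u_1]$, cut out by the single relation $f u_1 - g u_0 = 0$; this is the usual presentation of $\Proj$ of the Rees algebra when the center has two generators, and it realises the blow-up as the closure of the graph of $[f:g]$. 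Applying this at each stage and forming the fibre product of the resulting $\PP^1$-factors, I obtain a closed embedding of the local model of $X[A,B]$ into $(X\times_{\AA^1}\AA^{n+1})\times(\PP^1)^{\lfloor a\rfloor+\lceil b\rceil}$, one $\PP^1$ for each of the $\lfloor a\rfloor$ blow-ups along $Y_1$ and each of the $\lceil b\rceil$ blow-ups along $Y_2$. What remains to check is that the equations (\ref{BU1 eqns}) are exactly the bihomogeneous relations defining the image: each line of (\ref{BU1 eqns}) is precisely the relation $f u_1 = g u_0$ for the corresponding center, written in the homogeneous coordinates $[x_0^{(k)}:x_1^{(k)}]$ and $[y_0^{(k)}:y_1^{(k)}]$.

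For the second point I would invoke the universal property of the blow-up. Blowing up an ideal $\cK$ on $Z$ is terminal among $Z$-schemes $T\to Z$ for which $\cK\,\cO_T$ is invertible, and an elementary induction then shows that the iterated blow-up $X_{(i,j)}$ is terminal among $(X\times_{\AA^1}\AA^{n+1})$-schemes $T$ on which the total transforms of all of the centers $\langle x, t_1\cdots t_{\iota_A(k)}\rangle$ for $k\le i$ and $\langle y, t_{n+1}\cdots t_{\iota_B(k)}\rangle$ for $k\le j$ become simultaneously invertible. This characterisation is manifestly independent of the ordering of the centers, so any two orders of blow-up yield canonically isomorphic schemes over $X\times_{\AA^1}\AA^{n+1}$; in particular the square relating $X_{(i-1,j-1)}$, $X_{(i,j-1)}$, $X_{(i-1,j)}$ and $X_{(i,j)}$ commutes, which is exactly the claim that $\beta^A_{(i,j)}$ and $\beta^B_{(i,j)}$ commute. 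It is important here that the blow-ups are taken along the total transforms of the fixed centers rather than along strict transforms, since only then is the symmetric universal description available.

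The third point then follows formally. The projection $X\times_{\AA^1}\AA^{n+1}\to C[A,B]$ is the base change of the projective morphism $X\to C$ along $C[A,B]=C\times_{\AA^1}\AA^{n+1}\to C$, hence projective. Each blow-up morphism is projective, being $\Proj$ of a Rees algebra generated in degree one by a coherent sheaf, so the composite $b\colon X[A,B]\to X\times_{\AA^1}\AA^{n+1}$ is projective; equivalently, the closed embedding of the first point places $X[A,B]$ inside a product of projective bundles over $C[A,B]$. Composing projective morphisms over the Noetherian base $C[A,B]$ shows that $X[A,B]\to C[A,B]$ is projective.

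I expect the main obstacle to be the first point. One must verify carefully that at every stage of the sequence the ideal to be blown up really does remain generated by the two displayed elements, so that exactly one $\PP^1$-factor is produced and no spurious components are introduced, and that the equations (\ref{BU1 eqns}) cut out precisely the blow-up and not merely a closed subscheme containing it. This is the chart-by-chart computation underlying the whole construction; once it is in place, the commutativity and projectivity are essentially formal consequences.
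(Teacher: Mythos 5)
Your proposal is correct in substance, but it takes a genuinely different route from the paper: the paper disposes of the proposition in two lines, declaring point (1) immediate from the construction and citing Propositions 3.1.5 and 3.1.7 of \cite{CT} for points (2) and (3), whereas you give a self-contained argument. Your universal-property proof of commutativity is the right conceptual mechanism and is if anything more robust than a chart computation: the iterated blow-up along inverse image ideals of the fixed two-generated centers is terminal among $(X\times_{\AA^1}\AA^{n+1})$-schemes making all those ideals simultaneously invertible (equivalently, by the standard product-ideal lemma, it is the blow-up of the single product ideal), and this characterisation is symmetric in the centers. You are also right to insist that this only works for total transforms, which is indeed what the paper's construction uses. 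Your deduction of projectivity is the same formal argument the paper implicitly relies on.

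One inaccuracy should be corrected, and it is precisely at the point you yourself flagged as the main obstacle. The claim that $\Bl_{\langle f,g\rangle}Z$ is the closed subscheme of $Z\times\PP^1$ cut out by the \emph{single} relation $fu_1-gu_0=0$ is false here, because the ambient space is the singular hypersurface $xyz=t_1\cdots t_{n+1}$. For the first center $\langle x,t_1\rangle$ there is, besides the trivial relation, the Koszul-type relation $yz\cdot x - t_2\cdots t_{n+1}\cdot t_1=0$, which contributes the additional defining equation $x_0yz = x_1 t_2\cdots t_{n+1}$; this is exactly the third (and, on the $B$ side, sixth) line of \eqref{BU1 eqns}. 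Without it, $V(fu_1-gu_0)$ acquires a genuine extra irreducible component of full dimension over $V(x,t_1)$ (note $V(x,t_1)$ is a Weil divisor in the hypersurface, locally principal at its generic point, so the graph closure is strictly smaller than the vanishing of the single graph relation). The conclusion of point (1) is unaffected, since the Rees algebra of a two-generated ideal is generated in degree one and therefore $\Proj$ of it closedly embeds into $\PP^1_Z$ regardless of what equations cut out the image; but your parenthetical justification via ``the usual presentation of $\Proj$ of the Rees algebra'' should be replaced by this surjection-of-graded-algebras argument, with the full equation list \eqref{BU1 eqns} then verified chart by chart as you propose.
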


\begin{proof}
    The first point is immediate from the construction. The second point follows from Proposition 3.1.5 of \cite{CT} and the third point follows from Proposition 3.1.7 of \cite{CT}.
\end{proof}

\subsubsection*{Setting terminology.} Here, for convenience, we recall some terminology from \cite{CT}.

\begin{definition}
    We say that a dimension 2 component in a fibre of $X[A,B] \to C[A,B]$ is a $\Delta_1$-\textit{component} if it is contracted by the morphism $\beta^A_{(i,l_B)}$ for some $i\leq l_A$. Moreover if a $\Delta_1$-component in a fibre is contracted by such a map then we say it is \textit{expanded out} in this fibre. We label by $\Delta_1^{(i)}$ the $\Delta_1$-component resulting from the $i$-th blow-up along $Y_1$. Similarly, a $\Delta_2$-\textit{component} is a component which is contracted by a morphism $\beta^B_{(l_A,j)}$ for some $j\leq l_B$ and the $\Delta_2$-component resulting from this $j$-th blow-up along $Y_2$ is denoted $\Delta_2^{(j)}$.
    
    We say that a dimension 2 component in a fibre of $X[A,B]\to C[A,B]$ is a $\Delta$-\textit{component} if it is a $\Delta_i$-component for some $i$. If it is expanded out in some fibre we may alternatively refer to it as an \textit{expanded component}. We say that a $\Delta$-component is \textit{equal to} a component $W$ of a fibre of $X[A,B]$ if the projective coordinates associated to this $\Delta$-component are proportional to the non-vanishing coordinates of $W$.
\end{definition}

In the étale local notation, the $\Delta_1^{(i)}$ component is introduced by the blow-up of the ideal $\langle x,t_{1}t_{2}\cdots t_{i} \rangle$ and the $\Delta_2^{(n+2-j)}$ component is introduced by the blow-up of the ideal $\langle y,t_{n+1}t_{n}\cdots t_{j} \rangle$.

\begin{definition}
    We refer to an irreducible component of a $\Delta$-component as a \textit{bubble}. The notions of two bubbles being \textit{equal} and a bubble being \textit{expanded out} in a certain fibre follow directly from our previous definitions.
\end{definition}

\begin{definition}\label{delta multiplicity}
    We say that a $\Delta_i$-component is of \textit{pure type} if it is not equal to any $\Delta_j$-component for $j\neq i$. Otherwise we say it is of \textit{mixed type}. We will say a component is \emph{pure of type $i$} to say it is a $\Delta_i$-component of pure type. Similarly we will refer to edges in $\trop(X_0)$ parallel to $Y_2\cap Y_3$ as being of \emph{type 1} and edges parallel to $Y_1\cap Y_3$ as being of \emph{type 2}.

    A $\Delta$-component in a fibre of $X[A,B]$ is said to have $\Delta_i$-\textit{multiplicity} $l$ if this component is equal to $\Delta_i^{(j)}= \dots = \Delta_i^{(j+l)}$ for some $j$ and it is not equal to any other $\Delta_i$-components.
\end{definition}

Note that, in the setting of the construction $X[n]$ of \cite{CT}, if both the $\Delta_1$- and $\Delta_2$-multiplicities of a component are nonzero, then they must be identical.

\begin{definition}\label{base codimension}
    We say that a fibre in some expanded degeneration $X[A,B]$ has \textit{base codimension} $k$ if exactly $k$ basis directions vanish at this fibre. This is independent of the value $n$.
\end{definition}

\subsubsection*{Visualising the construction.} We will now describe how to visualise the fibres of $X[A,B]\to C[A,B]$. In the following, let $(t_1,\dots,t_{n+1})\in C[A,B]$ and let us assume that $t_i$ and $t_j$ are two consecutive zero entries of $(t_1,\dots,t_{n+1})$, i.e.\ $t_i=t_j=0$ and $t_{i+1},\dots, t_{j-1} \neq 0$. 

\medskip
\emph{Expanded components of pure type.} If both $i$ and $j$ are in $A$ and $j\notin B$, then the component $\Delta_1^{(i)}= \dots = \Delta_1^{(j-1)}$ is expanded out in the fibre of $X[A,B]$ over $(t_1,\dots,t_{n+1})$ and it is of pure type. If all other basis directions are nonzero in this fibre, then $Y_1 = \Delta_1^{(j)} = \dots = \Delta_1^{(\lfloor a \rfloor)}$ and $Y_2\cup Y_3 = \Delta_1^{(i-1)} = \dots = \Delta_1^{(1)}$. There is no component $\Delta_2^{n+2-k}$ in the whole of $X[A,B]$ for $k\leq j$ and all $\Delta_2$-components in this fibre are equal to $Y_1\cup Y_3$. Figure \ref{figure pure 1 type component} shows such a fibre. On the right hand side of the figure, we see how to represent this fibre tropically, where the added red vertices are the two bubbles forming the expanded out $\Delta_1$-component and the edge between them is the intersection of these bubbles. Similarly, if both $i$ and $j$ are in $B$ and $i\notin A$, then the component $\Delta_2^{(n+1-i)}= \dots = \Delta_2^{(n+2-j)}$ is expanded out in the fibre of $X[A,B]$ over $(t_1,\dots,t_{n+1})$ and of pure type.

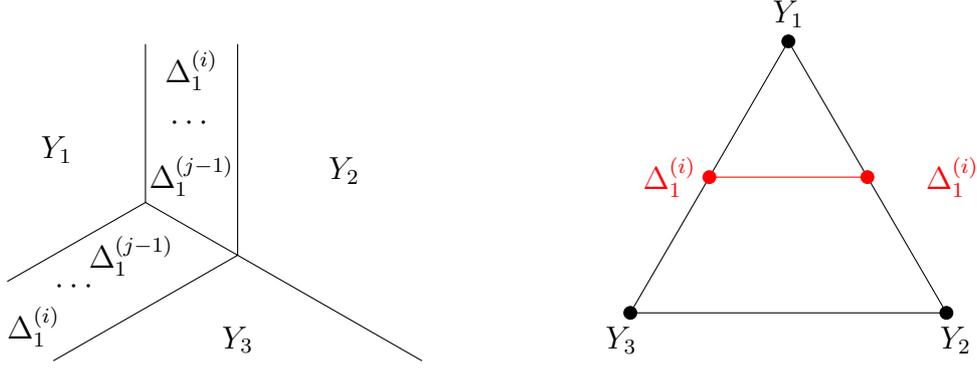
\begin{figure} 
    \begin{center}   
    \begin{tikzpicture}[scale=1.4]
        \draw   (-0.866,0.5) -- (0,0)
        (0,0) -- (0,2)       
        (-0.866,0.5) -- (-0.866,2)
        (-1.732, -1) -- (0,0)
        (-2.165,-0.249) -- (-0.866,0.5)
        
        (1.732, -1) -- (0,0);
        \draw (-1.916,-0.675) node[anchor=center]{$\Delta_1^{(i)}$};
        \draw (-1.516,-0.3) node[anchor=center]{$\cdots$};
        \draw (-1,0) node[anchor=center]{$\Delta_1^{(j-1)}$};
        \draw (-0.433,1.75) node[anchor=center]{$\Delta_1^{(i)}$};
        \draw (-0.433,1.25) node[anchor=center]{$\cdots $};
        \draw (-0.433,0.75) node[anchor=center]{$\Delta_1^{(j-1)}$};
        \draw (-1.7,1) node[anchor=center]{$Y_1$};
        \draw (1,0.8) node[anchor=center]{$Y_2$};
        \draw (0,-0.8) node[anchor=center]{$Y_3$};
        
    \end{tikzpicture}
    \hspace{2cm}
    \begin{tikzpicture}[scale=1.2]
\draw   

        (-1.732, -1) -- (0,2)       
        (-1.732, -1) -- (1.732, -1)
        
        (1.732, -1) -- (0,2)
        
        ;
        \draw[red] (-0.866,0.5) -- (0.866,0.5);
        \draw (-1.3,0.5) node[anchor=center, color = red]{$\Delta_1^{(i)}$};
        \draw (1.8,0.5) node[anchor=center, color = red]{$\Delta_1^{(i)}$};
        \filldraw[red] (-0.866,0.5) circle (2pt) ;
        \filldraw[red] (0.866,0.5) circle (2pt) ;
        \filldraw[black] (0,2) circle (2pt) ;
        \filldraw[black] (-1.732, -1) circle (2pt) ;
        \filldraw[black] (1.732, -1) circle (2pt) ;
        \draw (0,2.3) node[anchor=center]{$Y_1$};
        \draw (-1.832, -1.3) node[anchor=center]{$Y_3$};
        \draw (1.832, -1.3) node[anchor=center]{$Y_2$};
\end{tikzpicture}
    \end{center}
    \caption{Geometric and tropical picture at $t_i=t_j=0$ for $i,j\in A\setminus B$ in $X[A,B]$.}
    \label{figure pure 1 type component}
\end{figure}

\medskip
\emph{Expanded components of mixed type.} Let $t_i$ and $t_j$ be two consecutive zero entries of $(t_1,\dots,t_{n+1})$ as before. If $i\in A$ and $j\in B$, then $\Delta_1^{(i)}=  \Delta_1^{(n+2-j)}$ is expanded out in the fibre of $X[A,B]$ over $(t_1,\dots,t_{n+1})$. This situation is similar to that discussed in \cite{CT}, namely there is a bubble of mixed type in the $\pi^*(Y_1\cap Y_2)$ locus of such a fibre and a bubble of pure type in each of the loci $\pi^*(Y_1\cap Y_3)$ and $\pi^*(Y_2\cap Y_3)$. This is true regardless of whether $i$ is in $B$ or whether $j$ is in $A$, however this information affects the $\Delta$-multiplicity of the $Y_k$ components.

\medskip
\emph{Fibres with both components of mixed and pure type.} It is of course also possible to have fibres where both types of components described above appear. For example, let $(t_1,t_2,t_3) = (0,0,0)$, where $A\coloneqq \{1,2\}$ and $B \coloneqq \{3\}$. Then the fibre of $X[A,B]$ over this point has an expanded component $\Delta_1^{(1)}$ of pure type, as well as expanded components $\Delta_1^{(2)}$ and $\Delta_2^{(1)}$ meeting in a bubble of mixed type in the $\pi*(Y_1\cap Y_2)$ locus and separating out into two bubbles of pure type in the $\pi*(Y_1\cap Y_3)$ and $\pi*(Y_2\cap Y_3)$ loci. This is depicted in Figure \ref{mixed and pure}.

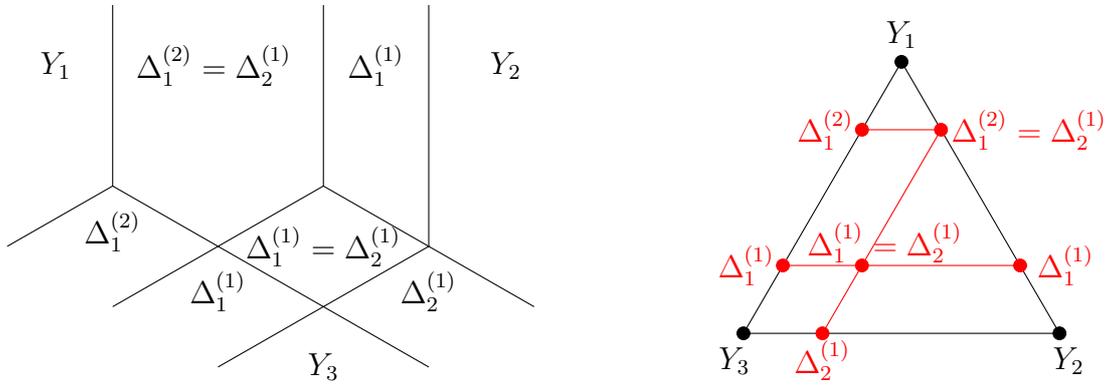
\begin{figure} 
    \begin{center}   
    \begin{tikzpicture}[scale=1.6]
        \draw    (0.866,-0.5) -- (0,-1)
        (0,-1) -- (-0.866,-0.5)
        (0,0) -- (0,1.5)
        (0,0) -- (-0.866,-0.5)
        (0,0) -- (0.866,-0.5)
        (-0.866,-0.5) -- (-1.732,0)       
        (-1.732,0) -- (-1.732,1.5)
        (-1.732, -1) -- (-0.866,-0.5)
        (1.732, -1) -- (0.866,-0.5)
        (-0.866,-1.5) -- (0,-1)
        (0.866,-1.5) -- (0,-1)
        (-2.599, -0.5) -- (-1.732,0)
        (0.866, -0.5) -- (0.866,1.5)
        
        ;
        \draw (-1.732,-0.375) node[anchor=center]{$\Delta_1^{(2)}$};
        \draw (-0.866,-0.875) node[anchor=center]{$\Delta_1^{(1)}$};
        \draw (0.866,-0.875) node[anchor=center]{$\Delta_2^{(1)}$};
        \draw (0.433,1) node[anchor=center]{$\Delta_1^{(1)}$};
        \draw (-0.9,1) node[anchor=center]{$\Delta_1^{(2)} = \Delta_2^{(1)}$};
        \draw (-2.2,1) node[anchor=center]{$Y_1$};
        \draw (1.5,1) node[anchor=center]{$Y_2$};
        \draw (0,-1.5) node[anchor=center]{$Y_3$};
        \draw (0,-0.5) node[anchor=center]{$ \Delta_1^{(1)}= \Delta_2^{(1)}$};
        
    \end{tikzpicture}
    \hspace{2cm}
    \begin{tikzpicture}[scale=1.2]
\draw   

        (-1.732, -1) -- (0,2)       
        (-1.732, -1) -- (1.732, -1)
        
        (1.732, -1) -- (0,2)
        
        ;
        \draw[red] (-0.433,1.25) -- (0.433,1.25);
        \draw[red] (-0.866,-1) -- (0.433,1.25);
        \draw[red] (-1.3,-0.25) -- (1.3,-0.25);
        \draw (-1.7,-0.25) node[anchor=center, color = red]{$\Delta_1^{(1)}$};
        \draw (-0.833,1.25) node[anchor=center, color = red]{$\Delta_1^{(2)}$};
        \draw (1.4,1.25) node[anchor=center, color = red]{$\Delta_1^{(2)}= \Delta_2^{(1)}$} ;
        \draw (1.8,-0.25) node[anchor=center, color = red]{$\Delta_1^{(1)}$};
        \draw (-0.18,0) node[anchor=center, color = red]{$\Delta_1^{(1)} = \Delta_2^{(1)}$};
        \draw (-0.866,-1.3) node[anchor=center, color = red]{$\Delta_2^{(1)}$};
        \filldraw[red] (-1.3,-0.25) circle (2pt) ;
        \filldraw[red] (1.3,-0.25) circle (2pt) ;
        \filldraw[red] (-0.866,-1) circle (2pt) ;
        \filldraw[black] (0,2) circle (2pt) ;
        \filldraw[black] (-1.732, -1) circle (2pt) ;
        \filldraw[black] (1.732, -1) circle (2pt) ;
        \filldraw[red] (-0.433,1.25) circle (2pt) ;
        \filldraw[red] (0.433,1.25) circle (2pt) ;
        \filldraw[red] (-0.433,-0.25) circle (2pt) ;
        \draw (0,2.3) node[anchor=center]{$Y_1$};
        \draw (-1.832, -1.3) node[anchor=center]{$Y_3$};
        \draw (1.832, -1.3) node[anchor=center]{$Y_2$};
\end{tikzpicture}
    \end{center}
    \caption{Geometric and tropical picture at $t_1 = t_2 = t_3 =0$ in $X[\{1,2\},\{3\}]$.}
    \label{mixed and pure}
\end{figure}

\medskip
\emph{$\Delta_i$-multiplicity of the components.} Again, let $t_i$ and $t_j$ be two consecutive zero entries of $(t_1,\dots,t_{n+1})$. If $i\in A$, the $\Delta_1$-multiplicity of the expanded component will be equal to the number of elements of the set $\{i,\dots, j-1\}$ which are contained in $A$. Recall that if $i\notin A$, then its $\Delta_1$-multiplicity is zero since it is pure of type 2. Similarly, if $j\in B$, then the $\Delta_2$-multiplicity of this component is given by the number of elements of the set $\{i+1,\dots, j\}$ contained in $B$.

\begin{remark}\label{remark about unbroken condition}
    The condition in Definition \ref{unbroken} is there to ensure that all modified special fibres have the expected base codimension (see Definition \ref{base codimension}). Indeed, if we allowed, for example, $i\in B\setminus A$ and $j\in A\setminus B$ with $i<j$ and the blow-ups as given above, then when $t_i = t_j=0$ and all other basis directions are non-vanishing, we would see a copy of $X_0$. Then we would need to identify such a fibre with other copies of $X_0$ in our stack $\mathfrak{X}'$, but this would imply identifying fibres of different base codimension, which cannot be done through isomorphisms on the base. To avoid this unpleasantness, we therefore fix an ordering of the elements of $A\setminus B$, $ A\cap B$ and $B\setminus A$.
\end{remark}

\begin{remark}\label{remark unbroken trop}
    Any modified special fibre in $X[A,B]$ can be expressed by adding certain edges and vertices to the triangle $\trop(X_0)$. Moreover, we notice that the sequence of blow-ups which yields this modification of $X_0$ is completely encoded in the $Y_1\cap Y_2$ edge of this triangle. Indeed, for each integral point on this edge, it suffices to note whether it has an edge of type 1 or 2 attached to it, and this information completely determines the modification.

    The unbroken condition states that, in order from $Y_1$ to $Y_2$, we should see first integral points of $Y_1\cap Y_2$ with only edges of type 2 attached, then points with both types of edges attached and, finally, on the side closest to $Y_2$, we should see points with only edges of type 1 attached to them.
\end{remark}

\subsubsection*{Viewing $X[A,B]$ as a sublocus of a larger scheme $X[A',B']$.}
Let $n'>n$, let $(A',B',n')$ be an unbroken pair, and suppose that $|A|\leq |A'|$ and $|B|\leq |B'|$, where $|A\setminus B| = i$, $|A\cap B| = j-i$ and $|B\setminus A| = n-j$. In this case, we may define a natural inclusion
\begin{equation}\label{AB embed}
    \iota \colon C[A,B]\longhookrightarrow C[A',B'],
\end{equation}
given by
\[
\begin{tikzcd}
    (t_1,\dots, t_i,t_{i+1},\dots,t_j,t_{j+1}\dots, t_{n}) \arrow[d, mapsto] \\
    (t_1,\dots, t_i,1,\dots,1,t_{i+1},\dots,t_j,1,\dots,1,t_{j+1}\dots, t_{n},1,\dots,1).
\end{tikzcd}
\]
This, in turn, determines an embedding
\[
X[A,B]\longhookrightarrow X[A',B'].
\]

\begin{proposition}
    Given any two unbroken pairs $(A_1,B_1,n_1)$ and $(A_2,B_2,n_2)$ there exists a common refinement $(A,B,n)$ such that both $X[A_1,B_1]$ and $X[A_2,B_2]$ embed into $X[A,B]$.
\end{proposition}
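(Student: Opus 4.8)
The plan is to recognise that an unbroken pair is determined, up to the embedding \eqref{AB embed}, by the sizes of its three blocks, and that the embedding relation corresponds to block-wise domination; the common refinement is then obtained by taking block-wise maxima.

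First I would record that, by Definition \ref{unbroken}, an unbroken pair $(A,B,n)$ is completely determined by the triple of nonnegative integers
\[
(p,q,r) \coloneqq \bigl(|A\setminus B|,\ |A\cap B|,\ |B\setminus A|\bigr),
\]
subject only to $p\geq 1$ and $r\geq 1$ (these encode that $1\in A\setminus B$ and $n+1\in B\setminus A$, while $A\cap B$ is permitted to be empty), with $n+1 = p+q+r$. Indeed, the unbroken ordering forces $A\setminus B = \{1,\dots,p\}$, $A\cap B = \{p+1,\dots,p+q\}$ and $B\setminus A = \{p+q+1,\dots,p+q+r\}$, so the triple recovers $A$, $B$ and $n$. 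Tropically this is just Remark \ref{remark unbroken trop}: reading the $Y_1\cap Y_2$ edge from $Y_1$ to $Y_2$, the entries of $(p,q,r)$ count the integral points carrying only type 2 edges, then both types, then only type 1 edges.

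Next I would observe that the map \eqref{AB embed} pads each of the three blocks separately with the entry $1$, and hence exists precisely when $(A',B')$ dominates $(A,B)$ block-wise, that is, $p\leq p'$, $q\leq q'$ and $r\leq r'$. Given this, I define the common refinement of $(A_1,B_1,n_1)$ and $(A_2,B_2,n_2)$, with block data $(p_1,q_1,r_1)$ and $(p_2,q_2,r_2)$, to be the unbroken pair $(A,B,n)$ whose block data is
\[
(p,q,r) \coloneqq \bigl(\max(p_1,p_2),\ \max(q_1,q_2),\ \max(r_1,r_2)\bigr),\qquad n+1 = p+q+r.
\]
Since $p_1,p_2\geq 1$ and $r_1,r_2\geq 1$, we have $p,r\geq 1$, so $(A,B,n)$ is genuinely unbroken, and by construction it dominates both input pairs block-wise.

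Finally, applying \eqref{AB embed} to each inclusion $C[A_s,B_s]\hookrightarrow C[A,B]$ for $s=1,2$, and passing to the induced embeddings $X[A_s,B_s]\hookrightarrow X[A,B]$, completes the argument. The only point requiring care is that the partial order governing the existence of \eqref{AB embed} is block-wise domination, rather than the coarser pair of inequalities $|A|\leq|A'|$, $|B|\leq|B'|$; once this is made explicit, the choice of block-wise maxima is forced and the verification of the unbroken condition is immediate. (In the degenerate case where one input already dominates the other block-wise, the corresponding embedding is simply the identity, so no strict increase of $n$ is required.)
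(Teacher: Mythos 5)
Your proof is correct and takes essentially the same route as the paper: both arguments choose block sizes dominating $(|A_s\setminus B_s|, |A_s\cap B_s|, |B_s\setminus A_s|)$ for $s=1,2$ (the paper allows any such dominating integers, you take the minimal choice of block-wise maxima), form the corresponding unbroken pair, and conclude via the embeddings \eqref{AB embed}. Your explicit observation that \eqref{AB embed} requires block-wise domination of the three block sizes, rather than merely $|A|\leq|A'|$ and $|B|\leq|B'|$, is a welcome sharpening of a hypothesis the paper states loosely.
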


\begin{proof}
    We start by picking any integer $l_{A\setminus B}$ such that $|A_1\setminus B_1|,|A_2\setminus B_2|\leq l_{A\setminus B}$. We then choose any integer $l_{A\cap B}$ such that $|A_1\cap B_1|,|A_2\cap B_2|\leq l_{A\cap B}$. Finally, we take any integer $ l_{B\setminus A}$ such that $|B_1\setminus A_1|,|B_2\setminus A_2|\leq l_{B\setminus A}$. Now, recalling the above notation $[k]\coloneqq \{1, \dots, k\}$, let
    \[
    A\coloneqq [l_{A\setminus B} + l_{A\cap B}] \quad \textrm{and} \quad B\coloneqq [ l_{A\cap B} + l_{B\setminus A}].
    \]
    Then the embeddings \eqref{AB embed} on the base described above induce the desired embeddings.
    
\end{proof}

\subsection{The group action and linearisation}

\subsubsection*{The group action.}
As the constructions here are generalisations of the construction $X[n]$ in \cite{CT}, we may define a group action on them in a similar way. We recall the details here for convenience.

\medskip
There is a global action of the maximal diagonal torus $G\subset \SL(n+1)$ on $X[A,B]$, which we may describe as follows. There is an isomorphism $\GG_m^n\cong G\subset\GG_m^{n+1}$, allowing us to regard an element of $G$ as an $(n+1)$-tuple $(\sigma_1,\dots,\sigma_{n+1})$ where $\prod_i \sigma_i = 1$. This acts naturally on $\AA^{n+1}$, which induces an action on $C[n]$. The isomorphism $\GG_m^n\cong G$ is given by
\[
(\tau_1, \ldots, \tau_{n})\longrightarrow (\tau_1, \tau_1^{-1}\tau_2, \ldots, \tau_{n-1}^{-1}\tau_{n}, \tau_n^{-1}).
\]
As in \cite{CT}, we use the notation $(\tau_1, \ldots, \tau_{n})$ to describe elements of $G$ throughout this work. Now, if a $\Delta_1^{(k)}$ component exists in $X[A,B]$, we denote its local $\PP^1$ coordinates by $(x_0^{(k)}:x_1^{(k)})$. Similarly, if a $\Delta_2^{(k)}$ component exists in $X[A,B]$, we denote its local $\PP^1$ coordinates by $(y_0^{(k)}:y_1^{(k)})$. The following proposition is immediate from \cite{CT}.

\begin{proposition}\label{group proposition}
Let $(A,B,n)$ be an unbroken pair and let $l_{A}\coloneqq |A|$ and $l_{B}\coloneqq |B|$. We have the following properties.
    \begin{enumerate}
        \item There is a unique $G$-action on $X[A,B]$ such that $X[A,B]\to X\times_{\AA^1}\AA^{n+1}$ is equivariant with respect to the natural action of $G$ on $\AA^{n+1}$.
        \item In the étale local model, this action is the restriction of the action on $(X\times_{\AA^1}\AA^{n+1})\times (\PP^1)^{l_A+l_B}$, which is trivial on $X$, acts by
        \begin{align*}
            t_1 &\longmapsto \tau_1^{-1}t_1  \\
            t_k &\longmapsto \tau_k^{-1}\tau_{k-1} t_k \\
            t_{n+1} &\longmapsto \tau_{n} t_{n+1}
        \end{align*}
        on the basis directions, and by
        \begin{align*}
            (x_0^{(k)}:x_1^{(k)}) &\longmapsto (\tau_k x_0^{(k)}: x_1^{(k)})  \\
            (y_0^{(k)}:y_1^{(k)}) &\longmapsto (y_0^{(k)}: \tau_{n+1-k} y_1^{(k)})
        \end{align*}
        on the components $\Delta_1^{(k)}$ and $\Delta_2^{(k)}$ which exist in $X[A,B]$.
        \item The embeddings $X[A,B] \hookrightarrow X[A',B']$ given in \eqref{AB embed} are equivariant under the group action.
    \end{enumerate}
\end{proposition}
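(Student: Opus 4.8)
The plan is to reduce everything to the standard fact that a group action lifts uniquely to a blow-up along an invariant centre: if $G$ acts on a Noetherian scheme $Z$ and $\cI \subset \cO_Z$ is a $G$-invariant ideal sheaf, then each $g \in G$ preserves $\cI$ and hence lifts, by the universal property of $\Bl_{\cI} Z$, to a unique automorphism of $\Bl_{\cI} Z$ over $Z$; the blow-down is then $G$-equivariant and the lift is unique. To apply this to part (1), I would first check that every centre in \eqref{BU1 eqns} is $G$-invariant. Since $G$ acts trivially on the $X$-factor, the functions $x$, $y$, $z$ are fixed while each basis direction is scaled by a character, so $t_1 \cdots t_k$ is sent to a unit multiple of itself; as a generator and any unit multiple of it generate the same ideal, the ideals $\langle x, t_1 \cdots t_k \rangle$ and $\langle y, t_{n+1} \cdots t_j \rangle$ are $G$-invariant. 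The same check appears for $X[n]$ in \cite{CT}, and the only change here is which subset of these ideals is blown up, which is irrelevant to invariance. Iterating the lifting statement along the tower of maps $\beta^A_{(i,j)}$, $\beta^B_{(i,j)}$ (in any order, by Proposition \ref{properties of BUs}(2)) produces the unique $G$-action of part (1).

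For the explicit formulas of part (2), I would record that the character scaling $t_1 \cdots t_k$ telescopes: from $t_1 \mapsto \tau_1^{-1} t_1$ and $t_m \mapsto \tau_m^{-1}\tau_{m-1} t_m$ one gets $t_1 \cdots t_k \mapsto \tau_k^{-1}\, t_1 \cdots t_k$. Substituting this into the defining relation $x_0^{(1)} t_1 = x\, x_1^{(1)}$ and its higher analogues, and using that $x$ is fixed, forces $(x_0^{(k)} : x_1^{(k)}) \mapsto (\tau_k x_0^{(k)} : x_1^{(k)})$ as the unique scaling preserving the relation projectively, with the symmetric statement for the $y$-coordinates. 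These formulas define an action on the ambient $(X \times_{\AA^1} \AA^{n+1}) \times (\PP^1)^{l_A+l_B}$; since this action preserves every blow-up equation it restricts to the closed subscheme $X[A,B]$ of Proposition \ref{properties of BUs}(1), and because the restriction lifts the base action it must equal the action of part (1) by uniqueness.

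Part (3) I would again obtain from uniqueness of the lift. The base embedding \eqref{AB embed} inserts $1$'s into the basis directions and is equivariant for the inclusion of tori $\phi \colon \GG_m^n \hookrightarrow \GG_m^{n'}$ that sets to $1$ the characters attached to the inserted directions; the $\SL$ product constraint survives because each inserted entry contributes a trivial factor. Both $g \mapsto \iota \circ (g \cdot -)$ and $g \mapsto \phi(g) \cdot \iota(-)$ are then lifts, through the blow-up towers defining $X[A,B]$ and $X[A',B']$, of one and the same map of bases, and the embedding $X[A,B] \hookrightarrow X[A',B']$ is compatible with these towers; so the two lifts coincide and $\iota$ is equivariant.

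The main obstacle is less a single hard step than the consistency bookkeeping inside part (2): the scalings forced blow-up by blow-up by the pure-type relations must also be compatible with the mixed-type relation $x_0^{(k)} y_0^{(n+1-k)} z = x_1^{(k)} y_1^{(n+1-k)}$ that identifies $\Delta_1^{(k)}$ with $\Delta_2^{(n+1-k)}$. Here one checks that under the proposed action the left-hand side scales by $\tau_k$ (from $x_0^{(k)}$, with $y_0$ and $z$ fixed) and the right-hand side also scales by $\tau_k$ (from $y_1^{(n+1-k)} \mapsto \tau_k y_1^{(n+1-k)}$), so the relation is preserved; it is precisely the telescoping of characters that makes these two independently-determined scalings agree. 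Once the centres are seen to be invariant and this compatibility is verified, uniqueness of the lift discharges the remainder of parts (1)--(3), exactly as in the $X[n]$ case of \cite{CT}.
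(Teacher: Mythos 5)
Your proposal is correct and is in substance the same argument the paper relies on: the paper gives no independent proof, stating only that the proposition is immediate from \cite{CT}, and what you have written out --- invariance of the centres $\langle x, t_1\cdots t_k\rangle$ and $\langle y, t_{n+1}\cdots t_j\rangle$ under the torus characters, unique lifting of the action through the blow-up tower, the telescoping $t_1\cdots t_k \mapsto \tau_k^{-1}\,t_1\cdots t_k$ forcing the stated scalings on the $\PP^1$-coordinates, the consistency check on the mixed relation $x_0^{(k)} y_0^{(n+1-k)} z = x_1^{(k)} y_1^{(n+1-k)}$, and equivariance of the embeddings via uniqueness of lifts along the torus inclusion --- is precisely the standard argument that citation encapsulates. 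All your computations check out (in particular $y_1^{(n+1-k)} \mapsto \tau_k\, y_1^{(n+1-k)}$ does match the $\tau_k$-scaling of the left-hand side), so the proposal is a correct, fully spelled-out version of the paper's proof.
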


\subsubsection*{The $G$-linearisation.}
Similarly to \cite{CT}, we may embed $X[A,B]$ into a product of projective bundles. This is done again by defining vector bundles
\begin{align*}
    \cF_{1}^{(i)} &= \pr_1^* \cO_X(-Y_{1}) \oplus \pr_2^* \cO_{\AA^{n+1}}(-V(t_{i})) \\
    \cF_{2}^{(j)} &= \pr_1^* \cO_X(-Y_{2}) \oplus \pr_2^* \cO_{\AA^{n+1}}(-V(t_{n+2-j}))
\end{align*}
on $X\times_{\AA^1}\AA^{n+1}$ for $i\in A$ and $j\in B$, where $\pr_i$ denote the natural projections from $X\times_{\AA^1}\AA^{n+1}$ to $X$ and $\AA^{n+1}$. Then the following lemma follows directly from Lemma 3.2.1 of \cite{CT}.

\begin{lemma}
    There is an embedding
    \[
    X[A,B] \longhookrightarrow \prod_{i,j}\PP( \cF_i^{(j)}),
    \]
    where the product of projective bundles $\prod_{i,j}\PP( \cF_i^{(j)})$ is constructed as a fibre product over $X\times_{\AA^1}\AA^{n+1}$. Etale locally this corresponds to the embedding in $(X\times_{\AA^1}\AA^{n+1})\times (\PP^1)^{l_A+l_B}$.
\end{lemma}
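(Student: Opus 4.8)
The plan is to build the embedding one projective bundle at a time, assigning to each elementary blow-up in the sequence defining $X[A,B]$ a single $\PP^1$-factor, and then to assemble the resulting maps into one closed immersion using the commutativity of the blow-ups.

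First I would invoke the standard description of a blow-up along an ideal generated by two sections: if $Z$ is a scheme and $I=\langle s_0,s_1\rangle$ with $s_0\in H^0(Z,\cL_0)$ and $s_1\in H^0(Z,\cL_1)$, then $\Bl_I Z$ is canonically the closure of the graph of the rational map $(s_0:s_1)\colon Z\dashrightarrow \PP^1$ inside the $\PP^1$-bundle $\PP(\cL_0^{-1}\oplus\cL_1^{-1})$ over $Z$. I would apply this to each $\beta^A_{(i,j)}$ and $\beta^B_{(i,j)}$. By the relations \eqref{BU1 eqns}, the $i$-th blow-up along $Y_1$ is, once the preceding blow-ups have been made, a blow-up along an ideal generated by the section cutting out the (strict transform of) $Y_1$ and by the basis direction $t_{\iota_A(i)}$. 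The two associated line bundles are $\pr_1^*\cO_X(Y_1)$ and $\pr_2^*\cO_{\AA^{n+1}}(V(t_{\iota_A(i)}))$, so the corresponding bundle is precisely $\PP(\cF_1^{(i)})$; the same argument with $x,y$ and $A,B$ interchanged identifies the blow-ups along $Y_2$ with the $\PP(\cF_2^{(j)})$. This is exactly the content of Lemma 3.2.1 of \cite{CT}, which I would cite rather than recompute.

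Next I would record all these section-ratios at once. Because the blow-up morphisms commute (Proposition \ref{properties of BUs}(2)), the maps to the individual $\PP(\cF_i^{(j)})$ are compatible over $X\times_{\AA^1}\AA^{n+1}$ and together define a morphism
\[
X[A,B] \longrightarrow \prod_{i,j}\PP(\cF_i^{(j)}),
\]
the product being the fibre product over $X\times_{\AA^1}\AA^{n+1}$. This morphism is proper, since $X[A,B]\to X\times_{\AA^1}\AA^{n+1}$ is projective by Proposition \ref{properties of BUs}(3), and it is a monomorphism, since the collection of ratios $(x_0^{(i)}:x_1^{(i)})$ and $(y_0^{(j)}:y_1^{(j)})$ cuts out $X[A,B]$ as the iterated blow-up; a proper monomorphism is a closed immersion. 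Finally, trivialising $\cO_X(Y_1)$, $\cO_X(Y_2)$ and the $\cO_{\AA^{n+1}}(V(t_k))$ étale locally turns each $\PP(\cF_i^{(j)})$ into a trivial $\PP^1$, so the embedding becomes the étale-local embedding into $(X\times_{\AA^1}\AA^{n+1})\times(\PP^1)^{l_A+l_B}$ of Proposition \ref{properties of BUs}(1).

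The main obstacle — and the step I expect to require the care captured by Lemma 3.2.1 of \cite{CT} — is verifying that the $i$-th successive blow-up along $Y_1$ is governed by the \emph{single} basis direction $t_{\iota_A(i)}$, hence by the twist $\cO_{\AA^{n+1}}(-V(t_{\iota_A(i)}))$, rather than by the full product $t_1\cdots t_i$ appearing in the original ideal $\langle x,t_1\cdots t_i\rangle$. This reduction comes from tracking how each ideal transforms under the earlier blow-ups, which is precisely what the relations \eqref{BU1 eqns} encode. Once the bundles $\cF_i^{(j)}$ are correctly matched to the factors in this way, the remainder of the argument is formal and identical to the case of $X[n]$, the only difference being that the product is now indexed over $i\in A$ and $j\in B$ rather than over the full range.
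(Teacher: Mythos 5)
Your proof is correct and takes essentially the same route as the paper: the paper's entire proof is a citation of Lemma 3.2.1 of \cite{CT}, which is exactly the lemma you reduce to, and your graph-closure and commutativity scaffolding is a faithful expansion of what that citation suppresses, adapted to the index sets $A$ and $B$. One small caveat worth noting: after the earlier blow-ups the strict transform of $Y_1$ is cut out by a section of the pullback of $\pr_1^*\cO_X(Y_1)$ only up to a twist by line bundles pulled back from $\AA^{n+1}$, but these are all trivial, so the identification of each factor with $\PP(\cF_1^{(i)})$ (and hence the embedding) is unaffected --- the choice of twist only becomes material later, for the $G$-linearisation of Lemma \ref{ample bundle L}.
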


We may then construct the following $G$-linearised ample line bundles on $X[A,B]$. 

\begin{lemma}\label{ample bundle L}
    There exists a $G$-linearised ample line bundle $\cL$ on $X[A,B]$ such that locally the lifts to this line bundle of the $G$-action on each $\PP^1$ corresponding to a $\Delta_1^{(i)}$ and on each $\PP^1$ corresponding to a $\Delta_2^{(n+2-j)}$ are given by
    \begin{align}
        \label{lift1}
        (x_0^{(i)};x_1^{(i)}) &\longmapsto (\tau_i^{a_i} x_0^{(i)} ; \tau_i^{-b_i} x_1^{(i)}) \\
        \label{lift2}
        (y_0^{(n+2-j)};y_1^{(n+2-j)}) &\longmapsto (\tau_{j+1}^{-c_j} y_0^{(n+2-j)} ; \tau_{j+1}^{d_j} y_1^{(n+2-j)}) 
    \end{align}
    for any $i\in A$ and $j\in B$ and any choice of positive integers $a_i,b_i,c_j,d_j$.
\end{lemma}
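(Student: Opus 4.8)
The plan is to build $\cL$ as a tensor product of the relative hyperplane bundles coming from the embedding of the preceding lemma, each suitably twisted by a character of $G$, together with an auxiliary ample bundle pulled back from $X$.

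First I would recall that each $\cF_i^{(j)}$ is a $G$-equivariant bundle on $X\times_{\AA^1}\AA^{n+1}$: it is a direct sum of a factor $\pr_1^*\cO_X(-Y_k)$, on which $G$ acts trivially by Proposition \ref{group proposition}(1), and a factor $\pr_2^*\cO_{\AA^{n+1}}(-V(t_\bullet))$, on which $G$ acts through its natural action on $\AA^{n+1}$. Consequently the $G$-action lifts canonically to $\PP(\cF_i^{(j)})$ and to the tautological bundle $\cO_{\PP(\cF_i^{(j)})}(1)$, and I would compute the weights of this canonical linearisation on the two coordinate sections of each factor. For a $\Delta_1^{(i)}$ factor the projective action of Proposition \ref{group proposition}(2) is $(x_0^{(i)}:x_1^{(i)})\mapsto(\tau_i x_0^{(i)}:x_1^{(i)})$, so the intrinsic difference of the two fibre weights is carried by the single character $\tau_i$; for a $\Delta_2$ factor the relevant single character is read off from Proposition \ref{group proposition}(2) in the same way.

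The key observation is then that twisting $\cO_{\PP(\cF_i^{(j)})}(1)$ by an arbitrary character of $G\cong\GG_m^n$ shifts both coordinate weights by that character, while passing to the power $\cO(d)$ multiplies the intrinsic weight difference by $d$. Since the difference of the desired weights $a_i$ and $-b_i$ equals $a_i+b_i$, and the overall shift is unconstrained, I would take the $(a_i+b_i)$-th power of the $\Delta_1^{(i)}$ factor and twist it by the character of $G$ that centres its two weights at $a_i$ and $-b_i$; because every monomial in the $\tau_l$ is a legitimate character of $G$, this is possible for arbitrary positive integers $a_i,b_i$. Applying the analogous recipe with $\cO(c_j+d_j)$ to each $\Delta_2$ factor produces \eqref{lift2}. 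Defining $\cL$ as the tensor product over all $i,j$ of these twisted pullbacks, restricted to $X[A,B]$, yields a $G$-linearised line bundle realising \eqref{lift1} and \eqref{lift2}, the factors interacting only through the torus coordinate attached to their own $\Delta$-component.

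Finally I would address ampleness, which is the one point needing genuine care because $X[A,B]$ is not proper over $k$. The bundle constructed so far is relatively ample over $X\times_{\AA^1}\AA^{n+1}$ only along the projective-bundle directions; to make it relatively ample over the affine base $C[A,B]\cong\AA^{n+1}$ I would tensor in a large multiple of $\pi^*\cO_X(H)$, where $H$ is relatively ample for $X\to C$ and hence ample on $X$ since $C$ is affine. As $G$ acts trivially on $X$, this factor carries the trivial linearisation and leaves the prescribed lifts \eqref{lift1} and \eqref{lift2} untouched. The resulting $\cL$ is relatively ample over the affine scheme $C[A,B]$, and since $X[A,B]\to C[A,B]$ is projective by Proposition \ref{properties of BUs}(3), relative ampleness over an affine base upgrades to genuine ampleness on $X[A,B]$. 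The main obstacle is precisely this last reconciliation of ampleness with the non-properness of the total space, together with the verification that the intrinsic weight difference on each factor is carried by a single $\tau_l$ so that the twists can be chosen independently to give the clean one-parameter formulas; both reduce to the equivariant bookkeeping already established in \cite{CT}.
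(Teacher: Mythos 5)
Your proposal is correct and coincides with the paper's intended argument: the paper's proof is a one-line citation of Lemma 3.2.2 of \cite{CT}, which carries out precisely the construction you describe --- character-twisted powers $\cO(a_i+b_i)$ and $\cO(c_j+d_j)$ of the tautological bundles on the factors $\PP(\cF_i^{(j)})$ to realise the weight pairs $(a_i,-b_i)$ and $(-c_j,d_j)$, tensored with the pullback of an ample bundle from $X$, with ampleness upgraded from relative ampleness using projectivity of $X[A,B]\to C[A,B]$ over the affine base (Proposition \ref{properties of BUs}). The one caveat is notational and sits in the statement rather than in your proof: reading the single governing character of a $\Delta_2^{(n+2-j)}$ factor literally off Proposition \ref{group proposition} gives $\tau_{j-1}$ rather than the $\tau_{j+1}$ printed in \eqref{lift2}, an index convention inherited from \cite{CT}, and your construction is robust to this since it only uses that the weight difference on each factor is carried by a single $\tau_l$ while character twists shift both weights simultaneously.
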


\begin{proof}
    This follows directly from Lemma 3.2.2 of \cite{CT}.
\end{proof}

As before, we will apply the relative Hilbert-Mumford criterion of \cite{HM} to define GIT stability conditions with respect to the $G$-linearised line bundles we defined. We leave this discussion for Section \ref{2nd stability}.

\section{The stack construction}\label{stack section}
Building upon the scheme constructions $X[A,B]\to C[A,B]$, we define a stack of expansions and its family, which we denote $\mathfrak{X}'\to \mathfrak{C}'$. The stacks $\mathfrak{X}$ and $\mathfrak{C}$ of \cite{CT} are substacks of $\mathfrak{X}'$ and $\mathfrak{C}'$; the latter contains more choices of expansions of $X$. Note that it does not contain all possible choices, as we have made restrictions for geometric reasons on the type of expansions we allow. For example, in order to construct a large family containing all the expansions, we chose them so that the order of the blow-ups commutes.

\subsection{The stack of expansions}\label{2nd stack}

In the following we define the stack of expansions $\mathfrak{C}'$. This will differ slightly from \cite{CT} and the construction of Li and Wu in that here we want to remember what components of the special fibre $X_0$ are blown up along which basis directions, i.e.\ the data of the sets $A$ and $B$. We will start by defining isomorphisms which when lifted to the family will effectively identify any two isomorphic fibres.

\subsubsection*{The isomorphisms.}
Let us consider $\AA^{n+1}$ with its natural torus action $\GG_m^{n}$ as before and denote by $(\AA^{n+1})_{[A,B]}$ this space enhanced by a choice of sets $A$ and $B$ such that $(A,B,n)$ is unbroken. We label elements of $(\AA^{n+1})_{[A,B]}$ by $(t_1,\dots, t_{n+1})$ as before and, for any $k$, we use again the notation $[k] \coloneqq \{1,\dots, k\}$.
Let $J \subseteq [n+1]$ and let $J^\circ = [n+1]\setminus J$ be its complement. We denote the cardinality of $J$ by $r\coloneqq |J|$ and define
\begin{align*}
    &\ind_{J}\colon [r] \longrightarrow  J \quad \textrm{and} \\
    &\ind_{J^\circ}\colon \{ r+1,\dots, n+1 \} \longrightarrow  J^\circ
\end{align*}
to be the order preserving isomorphisms.

\medskip
Remark that, although the map $\ind_J$ is defined identically to the map of the same name in Section 5 of \cite{CT}, the map $\ind_{J^\circ}$ is given a different definition here. This change is introduced so that we may keep track of which indices of the entries of $(t_1,\dots,t_{n+1})$ lie in $A$ or $B$ in the following isomorphisms.

\medskip
Let
\[
(\AA^{n+1}_{J})_{[A,B]} = \{ (t)\in (\AA^{n+1})_{[A,B]} |\ t_i = 0,\ i\in J \} \subset (\AA^{n+1})_{[A,B]}
\]
and
\[
(\AA^{n+1}_{U(J)})_{[A,B]} = \{ (t)\in (\AA^{n+1})_{[A,B]} |\ t_i \neq 0,\ i\in J^\circ \} \subset (\AA^{n+1})_{[A,B]}.
\]
Now, given $(\AA^{n+1}_{U(J)})_{[A,B]}$, we define a corresponding space $(\AA^r \times \GG_m^{n+1-r})_{[\epsilon(A),\epsilon(B)]}$ by specifying sets $\epsilon(A)$ and $\epsilon(B)$ in the following way. If $\ind_J(1)\in A\cap B$, then let $1\in \epsilon(A)\setminus \epsilon(B)$. Then, for $i>1$, if $\ind_J(i)\in A\cap B$ then $i\in \epsilon(A)\cap \epsilon(B)$. For $i\geq 1$, if $\ind_J(i)\in A\setminus B$ then $i\in \epsilon(A)\setminus \epsilon(B)$ and if $\ind_J(i)\in B\setminus A$ then $i\in \epsilon(B)\setminus \epsilon(A)$. Finally, let $i\in \epsilon(B)\setminus \epsilon(A)$ for all $i>r$. We note that $(\epsilon(A),\epsilon(B),n)$ is unbroken. We may then define the isomorphism
\[
\tau_J^{[A,B]} \colon (\AA^r \times \GG_m^{n+1-r})_{[\epsilon(A),\epsilon(B)]} \longrightarrow (\AA^{n+1}_{U(J)})_{[A,B]}
\]
given by
\[
(a_1, \ldots, a_{r},\sigma_{r+1},\dots, \sigma_{n+1}) \longrightarrow
(t_1,\dots, t_{n+1}),
\]
where 
\begin{align*}
    & t_i = a_j, \ \mathrm{ if } \ \ind_{J}(j)=i, \\
    & t_i = \sigma_j, \ \mathrm{ if } \ \ind_{J^\circ}(j)=i.
\end{align*}
Essentially, here, we have rearranged the order of the basis directions, so that those which can vanish are given by the $a_i$ entries at the front of the basis vector. These entries retain their ordering relative to each other and their partition with respect to the unbroken pair, except that the index of the first entry may go from $A\cap B$ to $\epsilon(A)\setminus \epsilon(B)$. The $\sigma_i$ entries correspond to the nonzero $t_i$ and their indices are all contained in $\epsilon(B)\setminus \epsilon(A)$.

Given two triples $(A,B,J)$ and $(A',B',J')$ such that $|J|=|J'|$ and such that $\epsilon(A) = \epsilon(A')$ and $\epsilon(B) = \epsilon(B')$, we may then define an isomorphism
\[
\tau_{J,[A,B]}^{J',[A',B']} = \tau_J^{[A,B]} \circ (\tau_{J'}^{[A',B']})^{-1} \colon (\AA^{n+1}_{U(J')})_{[A',B']} \longrightarrow (\AA^{n+1}_{U(J)})_{[A,B]}.
\]
Now, we define a second set of isomorphisms similarly to the above. Given any $(\AA^{n+1}_{U(J)})_{[A,B]}$, we may define an alternative space $(\AA^r \times \GG_m^{n+1-r})_{[\delta(A),\delta(B)]}$ by describing sets $\delta(A)$ and $\delta(B)$ in the following way. If $\ind_J(r)\in A\cap B$, then let $r\in \delta(B) \setminus \delta(A)$. For $i<r$, if $\ind_J(i)\in A\cap B$, then let $i\in \delta(A)\cap \delta(B)$. For $i\leq r$, if $\ind_J(i)\in A\setminus B$, then let $i\in \delta(A)\setminus \delta(B)$ and if  $\ind_J(i)\in B\setminus A$ let $i\in \delta(B)\setminus \delta(A)$. Again, the pair $(\delta(A),\delta(B),n)$ is unbroken. We then define an isomorphism
\[
\rho_J^{[A,B]} \colon (\AA^r \times \GG_m^{n+1-r})_{[\delta(A),\delta(B)]} \longrightarrow (\AA^{n+1}_{U(J)})_{[A,B]}
\]
given by
\[
(a_1, \ldots, a_{r},\sigma_{r+1},\dots, \sigma_{n+1}) \longrightarrow
(t_1,\dots, t_{n+1}),
\]
where 
\begin{align*}
    & t_i = a_j, \ \mathrm{ if } \ \ind_{J}(j)=i, \\
    & t_i = \sigma_j, \ \mathrm{ if } \ \ind_{J^\circ}(j)=i.
\end{align*}
As above, given two triples $(A,B,J)$ and $(A',B',J')$ such that $|J|=|J'|$ and such that $\delta(A) = \delta(A')$ and $\delta(B) = \delta(B')$, we may then specify an isomorphism
\[
\rho_{J,[A,B]}^{J',[A',B']} = \rho_J^{[A,B]} \circ (\rho_{J'}^{[A',B']})^{-1} \colon (\AA^{n+1}_{U(J')})_{[A',B']} \longrightarrow (\AA^{n+1}_{U(J)})_{[A,B]}.
\]

\subsubsection*{The stack $\mathfrak{C}'$.}
Finally, given an unbroken pair $(A,B,n)$, we define $\mathfrak{U}^{A,B}$ to be the quotient $[(\AA^{n+1})_{[A,B]}/\!\!\sim]$ by the equivalences generated by the $\GG_m^{n}$-action and the equivalences $\tau_{J,[A,B]}^{J',[A',B']}$ and $\rho_{J,[A,B]}^{J',[A',B']}$ for compatible triples $(A,B,J)$ and $(A',B',J')$. Recall from Section \ref{2nd BUs} that we had natural inclusions \eqref{AB embed}
\[
C[A,B]\longhookrightarrow C[A'',B''],
\]
for all $A''$ and $B''$ such that $(A'',B'',n'')$ is unbroken for some $n''>n$ and such that $|A|\leq |A''|$ and $|B|\leq |B''|$. These induce open immersions of stacks
\begin{align*}
    \mathfrak{U}^{A,B} \longrightarrow \mathfrak{U}^{A'',B''}.
\end{align*}
Let $\mathfrak{U}' \coloneqq \lim\limits_{\to} \mathfrak{U}^{A,B}$ be the direct limit over all unbroken pairs $(A,B,n)$ and let $\mathfrak{C}' \coloneqq C\times_{\AA^1} \mathfrak{U}'$.

\medskip
We now make a few remarks concerning the isomorphisms described in this section. As mentioned above, these isomorphisms are effectively a reordering of the basis directions, where the $t_i$'s whose indices lie in $J$ preserve their order relative to each other and the corresponding indices preserve their $A,B$ labelling, except in certain cases for the first and last index in $J$. As we will see in the proof of Proposition \ref{isom of fibres induced by isoms of base}, these are exactly the isomorphisms on the base which correspond to isomorphisms of the corresponding fibres above these elements.

\medskip
Let $(\AA^{n+1}_{U(J)})_{[A,B]}$ be such that $\ind_J(1)\in A\cap B$. Then it is isomorphic by the isomorphism $\tau_J^{[A,B]}$ to the space $(\AA^r \times \GG_m^{n+1-r})_{[\epsilon(A),\epsilon(B)]}$ defined above, where $1\notin \epsilon(B)$. We note that the isomorphisms $\tau_{J,[A,B]}^{J',[A',B']}$ identify $(\AA^{n+1}_{U(J)})_{[A,B]}$ with spaces $(\AA^{n+1}_{U(J')})_{[A',B']}$ where either $\ind_{J'}(1)\in A'\cap B'$ or $\ind_{J'}(1)\in A'\setminus B'$, as long as $|J|=|J'|$ and the equalities $\epsilon(A)=\epsilon(A')$ and $\epsilon(B)= \epsilon(B')$ hold.

\subsection{The family $\mathfrak{X}'$ over $\mathfrak{C}'$.}\label{family over stack of expansions}

Let $X[A,B]\to C[A,B]$ be a construction as described in Section \ref{2nd BUs} and recall that $\pi\colon X[A,B] \to X$ is the projection to the original family. Let $n'>n$, let $(A',B',n')$ be unbroken, and let $|A|\leq |A'|$ and $|B|\leq |B'|$. In such a case, we defined a natural inclusion
\[
\iota \colon C[A,B] \longhookrightarrow C[A',B']
\]
in \eqref{AB embed}. Then the induced family $(\iota^* X[A',B'],\iota^* \pi)$ is isomorphic to $(X[A,B],\pi)$ over $C[A,B]$. The following proposition shows that the equivalences defined on the stack $\mathfrak{U}^{A,B}$ lift to $C$-isomorphisms of fibres. Let $X[A,B]_{U(J)}$ and $X[A,B]_{J}$ be the restrictions of $X[A,B]$ to $(\AA^{n+1}_{U(J)})_{[A,B]}$ and $(\AA^{n+1}_{J})_{[A,B]}$ respectively.

\begin{proposition}\label{isom of fibres induced by isoms of base}
Let $(A,B,n)$ and $(A',B',n)$ define two unbroken pairs. The schemes $X[A,B]_{U(J)}$ and $X[A',B']_{U(J')}$ are isomorphic if and only if $(\AA^{n+1}_{U(J)})_{[A,B]}$  and $(\AA^{n+1}_{U(J')})_{[A',B']}$ are related by the isomorphisms $\tau_{J,[A,B]}^{J',[A',B']}$ and $\rho_{J,[A,B]}^{J',[A',B']}$.
\end{proposition}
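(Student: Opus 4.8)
The plan is to prove the two implications separately, in each case passing between the geometry of the total space and the combinatorial data encoded tropically along the $Y_1\cap Y_2$ edge of $\trop(X_0)$, as described in Remark \ref{remark unbroken trop}. The unifying observation is that a restricted scheme $X[A,B]_{U(J)}$ is determined by the blow-up ideals of Section \ref{2nd BUs}, and these depend only on the ordering of the basis directions together with their $A,B$-labelling; the isomorphisms $\tau$ and $\rho$ are designed to be exactly the reorderings of the basis directions that preserve this data.

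\emph{The reverse implication.} Suppose the bases are related by $\tau_{J,[A,B]}^{J',[A',B']}$; the argument for $\rho$ is symmetric. By construction this map reorders the basis directions so that those indexed by $J$ come first, preserving both their relative order and their $A,B$-labelling, with the sole exception that the first index may be relabelled from $A\cap B$ to $\epsilon(A)\setminus\epsilon(B)$. I would first reduce to a comparison inside a common $X[A'',B'']$ via the embeddings \eqref{AB embed}, using that the induced family $(\iota^*X[A'',B''],\iota^*\pi)$ is isomorphic to the original over its base. Since the blow-up ideals depend only on the $A,B$-labelled ordering, and the order of the blow-up morphisms commutes by Proposition \ref{properties of BUs}(2), the reordering carries one sequence of blow-ups to the other and hence induces the desired scheme isomorphism. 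The relabelling of the extreme index does not affect the fibre, because a bubble on the boundary of an expanded component appears as pure type regardless of whether its index lies in $A\cap B$ (see Figure \ref{figure pure 1 type component}).

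\emph{The forward implication.} Conversely, suppose $\Phi\colon X[A,B]_{U(J)}\to X[A',B']_{U(J')}$ is an isomorphism of families compatible with the projections to $X$. First I would argue that $\Phi$ descends to an isomorphism of base loci sending each fibre to an isomorphic fibre. Comparing base codimension (Definition \ref{base codimension}) on the deepest stratum, where all $J$-directions vanish, forces $|J|=|J'|$. It then remains to recover the ordered partition of the $J$-directions into $A\setminus B$, $A\cap B$ and $B\setminus A$ from the fibre geometry. By Remark \ref{remark unbroken trop}, the modification of $X_0$ in a fibre where a run of consecutive $J$-directions vanishes is completely determined by the sequence of type $1$ and type $2$ edges attached to the integral points of $Y_1\cap Y_2$; since $\Phi$ preserves every fibre up to isomorphism, it preserves these sequences, and this is precisely the data recorded by $\epsilon$ (respectively $\delta$). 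I would conclude that $\epsilon(A)=\epsilon(A')$ and $\epsilon(B)=\epsilon(B')$, or the analogous equalities for $\delta$, yielding the required $\tau$ or $\rho$.

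The hard part is the treatment of the first and last indices of $J$. As Figure \ref{figure pure 1 type component} shows, a bubble lying on the boundary of the expanded region is of pure type, so the fibre cannot distinguish a boundary index in $A\cap B$ from one lying purely in a single set (at the front via $\epsilon$, at the back via $\delta$). This is exactly the ambiguity built into the definitions of $\epsilon$ and $\delta$, and it is why two distinct families of base isomorphisms must appear: $\tau$ absorbs the ambiguity at the first index and $\rho$ at the last. The main obstacle is therefore to verify carefully that every fibre isomorphism is accounted for by exactly one of these two adjustments, and that no further identifications are possible because the interior combinatorial data, in view of the unbroken condition (Definition \ref{unbroken}), is rigid. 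Establishing this rigidity of the interior, while isolating the genuine flexibility at the two extreme indices, is the crux of the proof.
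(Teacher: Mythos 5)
Your overall strategy is the same as the paper's: both implications are settled fibre-by-fibre, the reverse one by observing that the restricted blow-up ideals depend only on the $A,B$-labelled ordering of the vanishing directions (the paper phrases this as the placement of nonzero entries affecting only $\Delta$-multiplicity, not which components are expanded out), and the forward one by recovering the interior labelling from the fibre geometry while isolating the ambiguity at the two extreme indices of $J$. Your tropical reformulation via Remark \ref{remark unbroken trop} is a legitimate substitute for the paper's direct inspection of the blow-up equations, and your detour through a common refinement $X[A'',B'']$ is unnecessary but harmless: the embeddings \eqref{AB embed} insert units rather than reorder coordinates, so they do no work there; the real content is the ideal comparison you state next.

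There is, however, one step that fails as written. In the forward implication you conclude ``$\epsilon(A)=\epsilon(A')$ and $\epsilon(B)=\epsilon(B')$, \emph{or} the analogous equalities for $\delta$,'' and later insist that every fibre isomorphism is accounted for by \emph{exactly one} of the two adjustments. This disjunction is too strong, because the fibre data is blind to the $B$-membership of $\ind_J(1)$ and to the $A$-membership of $\ind_J(r)$ \emph{simultaneously}, whereas $\epsilon$ forgets only the former and $\delta$ only the latter. Concretely, take $n+1=4$, $J=J'=\{2,3\}$, $(A,B)=(\{1,2,3\},\{2,3,4\})$ and $(A',B')=(\{1,2\},\{3,4\})$: all fibres over $U(J)$ agree (copies of $X_0$ in base codimension one, and a single mixed-type expanded component at the deepest stratum), yet $\epsilon(A)\neq\epsilon(A')$ (they differ in the $A$-membership recorded at the last index of $J$) and likewise the $\delta$-data differ at the first index, so no single $\tau$ and no single $\rho$ relates the two bases; one must compose, e.g.\ pass through the intermediate pair $(\{1,2,3\},\{3,4\})$. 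The paper's proof is careful on precisely this point: its converse only pins each extreme index down to two possible labellings and then invokes ``a sequence of $\tau_{J,[A,B]}^{J',[A',B']}$ and $\rho_{J,[A,B]}^{J',[A',B']}$ isomorphisms,'' i.e.\ the equivalence \emph{generated} by both families. Once you replace ``exactly one of these two adjustments'' by compositions of the two, the rigidity of the interior labelling that you correctly identify as the crux is exactly the paper's case analysis (first $k-1$ zeros in $A$ iff the same holds for $A'$, last $k-1$ zeros in $B$ iff the same for $B'$), and your plan goes through.
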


\begin{proof}
    It is clear that $X[A,B]_{U(J)}$ and $X[A',B']_{U(J')}$ are isomorphic on $\pi^{-1}_{[A,B]} (X^\circ) \cong \pi^{-1}_{[A',B']} (X^\circ)$, i.e.\ on the locus where all basis directions are nonzero. We must show that for each modified special fibre in $X[A,B]_{U(J)}$ there is an isomorphic fibre in $X[A',B']_{U(J')}$ if and only if the corresponding bases are related by $\tau_{J,[A,B]}^{J',[A',B']}$ and $\rho_{J,[A,B]}^{J',[A',B']}$. Then the proposition follows from the fact that if there is a bijection between the isomorphism classes of fibres of $X[A,B]$ and those of $X[A',B']$, the two spaces must be isomorphic as schemes, as they must both be given by the same sequence of blow-ups on $X\times_{\AA^1}\AA^{n+1}$. As the blow-ups commute, we may discover this sequence just by looking at the fibres where all entries of $(t_1,\dots, t_{n+1})$ are zero (in fact, by construction, it is enough to show that these two fibres of maximal base codimension are isomorphic to show that the schemes $X[A,B]$ and $X[A',B']$ are isomorphic).

    \medskip
    First, assume there is a sequence of $\tau_{J,[A,B]}^{J',[A',B']}$ and $\rho_{J,[A,B]}^{J',[A',B']}$ isomorphisms such that $(\AA^{n+1}_{U(J)})_{[A,B]} \cong (\AA^{n+1}_{U(J')})_{[A',B']}$. Since $|J|=|J'|=r$, this implies that, for $1<i<r$, the element $\ind_J(i)$ belongs to $A\setminus B$ if and only if $\ind_{J'}(i)$ belongs to $A'\setminus B'$. The same goes for the sets $A\cap B$ with $A'\cap B'$ and $B\setminus A$ with $B'\setminus A'$.

    Next, we note that the placement of the nonzero entries in $(t_1,\dots, t_{n+1})$ influences the $\Delta$-multiplicity of the components but has no effect on how many components are expanded out in the fibre above this point or whether these expanded components are of type $\Delta_1$ or type $\Delta_2$. Moreover, changing the first zero entry of $(t_1,\dots,t_{n+1})$ from an element of $A\setminus B$ to an element of $A\cap B$ or vice versa also does not affect which components are expanded out in this fibre. The same is true of changing the last zero entry of $(t_1,\dots,t_{n+1})$ from $B\setminus A$ to $A\cap B$ or vice versa. This can be easily seen by studying the equations of the blow-ups. It therefore must follow that a sequence of isomorphisms $\tau_{J,[A,B]}^{J',[A',B']}$ and $\rho_{J,[A,B]}^{J',[A',B']}$, as they only modify the base in ways that preserve the fibres, induces an isomorphism $X[A,B]_{U(J)}\cong X[A',B']_{U(J')}$.

    \medskip
    Conversely, let us assume that all fibres in $X[A,B]_{U(J)}$ are isomorphic to fibres in $X[A',B']_{U(J')}$. This means that for every fibre of $X[A,B]_{U(J)}$ over a point $(t_1,\dots,t_{n+1})$ of the base $(\AA^{n+1}_{U(J)})_{[A,B]}$, there exists a fibre of $X[A',B']_{U(J')}$ over a point $(t'_1,\dots,t'_{n+1})$ of the base $(\AA^{n+1}_{U(J')})_{[A',B']}$ such that the exact same number of components are expanded out in the same loci of both fibres and these components must have the same type. Clearly, there must be the same amount of zero entries in $(t_1,\dots,t_{n+1})$ and $(t'_1,\dots,t'_{n+1})$; call this number $k$. Moreover, for $k>1$, by studying the equations of the blow-ups we can see that for the above statement to be true, the first $k-1$ zeros of $(t_1,\dots,t_{n+1})$ must have indices belonging to $A$ if and only if the first $k-1$ zeros of $(t'_1,\dots,t'_{n+1})$ have indices belonging to $A'$; and the last $k-1$ zeros of $(t_1,\dots,t_{n+1})$ must have indices belonging to $B$ if and only if the last $k-1$ zeros of $(t'_1,\dots,t'_{n+1})$ have indices belonging to $B'$.

    If the index of the first zero in $(t_1,\dots,t_{n+1})$ is not contained in $A$, then it must be contained in $B\setminus A$ and all expanded components in the fibre above this point are of type $\Delta_2$. This can only be isomorphic to the fibre above the point $(t'_1,\dots,t'_{n+1})$ if the indices of all zero entries in $(t'_1,\dots,t'_{n+1})$ are contained in $B'\setminus A'$.  If the index of the first zero in $(t_1,\dots,t_{n+1})$ is contained in $A$ and the indices of all subsequent zeros are contained in $B$, however, then whether or not this first index is contained in $B$ has no effect on the components expanded out in this fibre. In conclusion, if the fibres above points $(t_1,\dots,t_{n+1})$ and $(t'_1,\dots,t'_{n+1})$ are isomorphic, then the index of the first zero entry of $(t_1,\dots,t_{n+1})$ is in $B\setminus A$ if and only if the index of the first zero entry of $(t'_1,\dots,t'_{n+1})$ is in $B'\setminus A'$; and if the index of the first zero entry of $(t_1,\dots,t_{n+1})$ is in $A\cap B$ then the index of the first zero entry of $(t'_1,\dots,t'_{n+1})$ is in either of $A'\setminus B'$ or $A'\cap B'$. A similar reasoning holds with respect to the index of the last zero entry being contained in $B\setminus A$ or $A\cap B$. Indeed, if this index is contained in $A\cap B$, then $\ind_{J'}(k)$ may be contained in $A'\cap B'$ or $B'\setminus A'$. But these are exactly the elements of the base which are related by the isomorphisms $\tau_{J,[A,B]}^{J',[A',B']}$ and $\rho_{J,[A,B]}^{J',[A',B']}$.
\end{proof}

We define $\mathfrak{X}^{A,B}$ to be the quotient $[X[A,B]/\!\!\sim]$ by the equivalences generated by the $\GG_m^{n}$-action and equivalences lifted from $\mathfrak{U}^{A,B}$. There are natural immersions of stacks
\begin{align*}
    \mathfrak{X}^{A,B} \longhookrightarrow \mathfrak{X}^{A',B'},
\end{align*}
induced by the immersions $\mathfrak{U}^{A,B} \hookrightarrow \mathfrak{U}^{A',B'}$. Finally, we define $\mathfrak{X} = \lim\limits_{\to} \mathfrak{X}^{A,B}$ to be the direct limit over all unbroken pairs $(A,B,n)$. It is an Artin stack (here we mean as opposed to a Deligne-Mumford stack, i.e.\ it has infinite automorphisms).

\subsection{Stability conditions}\label{2nd stability}

We now discuss stability conditions on the stack $\mathfrak{X}'$. We generalise the two perspectives given in \cite{CT} to this case, namely that coming from the GIT stability conditions arising from the $G$-linearised line bundles we constructed in Section \ref{2nd BUs}, and that which relates to the stability conditions presented by Li and Wu in \cite{LW} and Maulik and Ranganathan in \cite{MR}.

\subsubsection*{GIT stability on the relative Hilbert scheme of points.}
Let $(A,B,n)$ be an unbroken pair and denote by
\[
H^m_{[A,B]} \coloneqq \Hilb^m(X[A,B]/C[A,B])
\]
the relative Hilbert scheme of $m$ points. Recall that in Lemma \ref{ample bundle L}, we gave possible choices for a $G$-linearised ample line bundle $\cL$ on $X[A,B]$. As in \cite{CT}, this will induce a $G$-linearised ample line bundle on $H^m_{[A,B]}$ in the following way. Let
\[
Z^m_{[A,B]}\subset H^m_{[A,B]}\times_{C[A,B]} X[A,B]
\]
be the universal family, with first and second projections $p$ and $q$. The line bundle
\[
\cM_l\coloneqq \det p_*(q^*\cL^{\otimes l}|_{Z^m_{[A,B]}})
\]
is relatively ample when $l\gg0$ and is $G$-linearised, by the same argument as in \cite{GHH}. The choices of line bundle $\cM_l$ on $H^m_{[A,B]}$ arising in this way paired with choices of lifts of the group action to $\cM_l$ are enough to allow us to construct the desired stable locus. The following results follow directly from Theorem 4.4.3 and Lemma 4.4.1 of \cite{CT}.

\begin{theorem}\label{stability theorem}
    Let $(A,B,n)$ be an unbroken pair and let $Z$ be a length $m$ zero-dimensional subscheme in a fibre of $X[A,B]$. Then there exists a GIT stability condition on $H^m_{[A,B]}$ which makes $Z$ stable (as opposed to strictly semistable) if and only if there is at least one point of the support of $Z$ in the union $(\Delta_1^{(k)})^\circ \cup (\Delta_2^{(n+1-k)})^\circ$ for every $k$.
\end{theorem}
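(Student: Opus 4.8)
The plan is to apply the relative Hilbert–Mumford criterion of \cite{HM} to the $G$-linearised line bundle $\cM_l$ on $H^m_{[A,B]}$, exactly as in Theorem 4.4.3 of \cite{CT}, and thereby reduce the whole question to a combinatorial analysis of Hilbert–Mumford weights. Since $G\cong\GG_m^n$ is a torus, a nontrivial one-parameter subgroup $\lambda$ is recorded by an integer vector $(w_1,\dots,w_n)$ in the cocharacter lattice, and $Z$ is stable for a fixed linearisation precisely when the weight $\mu^{\cM_l}(Z,\lambda)$ is strictly positive for every such $\lambda$. Because the linearisations at our disposal depend on the free positive integer parameters $a_i,b_i,c_j,d_j$ of Lemma \ref{ample bundle L}, the assertion becomes: there is a choice of these parameters making $\lambda\mapsto\mu^{\cM_l}(Z,\lambda)$ strictly positive away from the origin if and only if the support condition holds.

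The first key step, generalising Lemma 4.4.1 of \cite{CT}, is to compute $\mu^{\cM_l}(Z,\lambda)$ by taking the limit $\lim_{t\to0}\lambda(t)\cdot Z$ in the Hilbert scheme and expressing the resulting weight as a sum of local contributions indexed by the points of the support of $Z$. The essential bookkeeping comes from Proposition \ref{group proposition}: the torus coordinate $\tau_k$ acts nontrivially on exactly the coordinate $x_0^{(k)}$ of a $\Delta_1^{(k)}$-component and on the corresponding coordinate of a $\Delta_2^{(n+1-k)}$-component, and the two torus-fixed points on each such $\PP^1$ are precisely the intersections with neighbouring components. Consequently a support point lying in $(\Delta_1^{(k)})^\circ$ or $(\Delta_2^{(n+1-k)})^\circ$ is \emph{not} fixed by the subgroup $\lambda_k=(0,\dots,0,1,0,\dots,0)$, so it contributes a term to $\mu^{\cM_l}(Z,\lambda)$ that genuinely depends on $w_k$ and whose sign can be prescribed through the positive parameters of \eqref{lift1}–\eqref{lift2}; a support point on the boundary of such a component, or in a component not seen by $\tau_k$, contributes nothing in the $w_k$-direction. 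This dictionary between the geometry of the support and the weight function is the heart of the argument.

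For the \emph{if} direction, suppose every $(\Delta_1^{(k)})^\circ\cup(\Delta_2^{(n+1-k)})^\circ$ meets the support. Then for each $k$ the function $\mu^{\cM_l}(Z,\cdot)$ contains a term nonconstant in $w_k$ whose sign we may control, so no nonzero $\lambda$ can annihilate every local contribution simultaneously; taking the $a_i,b_i,c_j,d_j$ large enough in the correct directions then forces strict positivity on all nonzero $\lambda$, exactly as in \cite{CT}. For the \emph{only if} direction, suppose that for some $k$ neither $(\Delta_1^{(k)})^\circ$ nor $(\Delta_2^{(n+1-k)})^\circ$ meets the support. Then every support point is fixed by, or insensitive to, the subgroup $\lambda_k$, so one of $\mu^{\cM_l}(Z,\pm\lambda_k)\le0$ holds no matter how the linearisation parameters are chosen; hence $Z$ is at best strictly semistable for every GIT stability condition.

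The step I expect to be the main obstacle is the explicit weight computation and, in particular, its bookkeeping for \emph{mixed-type} components: when a $\Delta_1^{(i)}$- and a $\Delta_2^{(n+1-i)}$-component coincide over the $\pi^*(Y_1\cap Y_2)$ locus, a single bubble is acted on by two torus coordinates, and one must verify that the local contributions still split cleanly along the $w_k$-directions, together with the fact that $\lambda$ also acts on the base and may move $Z$ to a neighbouring fibre. Once the local-contribution formula of Lemma 4.4.1 of \cite{CT} is checked to carry over verbatim to the more general expansions $X[A,B]$—which it does, since the group action and the linearised lifts retain the same shape—the remaining convex-geometric argument for strict positivity is formal and identical to \cite{CT}.
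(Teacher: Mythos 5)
Your proposal is correct and takes essentially the same route as the paper: the paper's proof consists precisely of the observation that the statement follows directly from Theorem 4.4.3 and Lemma 4.4.1 of \cite{CT}, i.e.\ the relative Hilbert--Mumford weight analysis you reconstruct, which carries over because the $G$-action of Proposition \ref{group proposition} and the linearised lifts \eqref{lift1}--\eqref{lift2} of Lemma \ref{ample bundle L} on $X[A,B]$ have the same shape as on $X[n]$. Your key points --- the pairing of $\Delta_1^{(k)}$ with $\Delta_2^{(n+1-k)}$ through the single torus coordinate $\tau_k$, the freedom in the parameters $a_i,b_i,c_j,d_j$ for the \emph{if} direction, and the fixed-point cancellation $\mu^{\cM_l}(Z,\lambda_k)=-\mu^{\cM_l}(Z,\lambda_k^{-1})$ for the \emph{only if} direction --- are exactly the content of the cited computation.
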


\begin{corollary}
    Theorem \ref{stability theorem} still holds if we restrict the possible GIT stability conditions to just those arising from choices of line bundles $\cM_l$ as defined above.
\end{corollary}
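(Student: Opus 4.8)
The plan is to reduce the statement to the construction already carried out in the proof of Theorem~\ref{stability theorem}, observing that the stability condition produced there is of the required form. The forward implication is immediate: every $\cM_l$ together with a chosen lift of the $G$-action is in particular a $G$-linearised ample line bundle on $H^m_{[A,B]}$, so stability with respect to some $\cM_l$ is a GIT stability condition in the sense of Theorem~\ref{stability theorem}, and hence already forces the support condition. It therefore suffices to treat the reverse implication: assuming that the support of $Z$ meets $(\Delta_1^{(k)})^\circ \cup (\Delta_2^{(n+1-k)})^\circ$ for every $k$, I want to exhibit a line bundle $\cM_l$, together with a lift of the $G$-action, realising $[Z]$ as stable.

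First I would recall how Theorem~\ref{stability theorem} is established via the relative Hilbert--Mumford criterion of \cite{HM}. Stability of $[Z]$ with respect to a $G$-linearised ample line bundle amounts to the Hilbert--Mumford weight $\mu(Z,\lambda)$ satisfying the strict inequality characterising stability (in the convention of \cite{HM}) for every nontrivial one-parameter subgroup $\lambda$ of $G$, and the destabilising $\lambda$ correspond to the finitely many integral cocharacters pushing the support of $Z$ towards the $\Delta$-strata. The stabilising line bundle constructed in the proof is exactly the $\cL$ of Lemma~\ref{ample bundle L}, whose linearisation on each $\Delta_1^{(i)}$ and $\Delta_2^{(n+2-j)}$ depends on the free positive integers $a_i,b_i,c_j,d_j$; the essential point is that $\cM_l = \det p_*(q^*\cL^{\otimes l}|_{Z^m_{[A,B]}})$ is built directly from this $\cL$, so the freedom in the linearisation of $\cL$ is inherited by $\cM_l$.

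The key step is to compare the Hilbert--Mumford weights for $\cM_l$ with those for $\cL$. For a length $m$ zero-dimensional $Z$ and $l\gg 0$, the weight of $\lambda$ on the fibre of $\cM_l$ at $[Z]$ equals the weight of $\lambda$ on $\det H^0(Z_0, \cL^{\otimes l}|_{Z_0})$, where $Z_0 = \lim_{t\to 0}\lambda(t)\cdot Z$; writing the local ring of $Z_0$ at each support point as an Artinian factor, this weight is $l$ times the sum of the $\cL|_p$-weights of $\lambda$ over the support of $Z_0$ (counted with multiplicity), plus a correction of order $O(1)$ in $l$ coming from the $\lambda$-weights on the local rings, which is governed by the fixed action of Proposition~\ref{group proposition} and is therefore independent of the parameters $a_i,b_i,c_j,d_j$. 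The support condition guarantees, for each relevant $\lambda$, a support point lying in some open stratum $(\Delta_1^{(k)})^\circ$ or $(\Delta_2^{(n+1-k)})^\circ$, which contributes a strictly signed multiple of the corresponding parameter to the leading term. Choosing the $a_i,b_i,c_j,d_j$ large enough---and this freedom is exactly what Lemma~\ref{ample bundle L} provides---makes the leading term dominate, so $\mu(Z,\lambda)$ has the stable sign for every destabilising $\lambda$ and $[Z]$ is stable with respect to $\cM_l$. This is precisely the stability condition produced in Theorem~4.4.3 and Lemma~4.4.1 of \cite{CT}, so no stability condition outside the family $\{\cM_l\}$ is ever needed.

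The main obstacle is the control of the $O(1)$ correction term: I must ensure that the linear-in-$l$ contribution coming from a single support point in an open $\Delta$-stratum dominates the bounded contributions coming from the rest of $Z$ and from its non-reduced structure, uniformly over the finitely many destabilising cocharacters $\lambda$. Since the parameters $a_i,b_i,c_j,d_j$ may be scaled independently and there are only finitely many $\lambda$ to test, this is a finite positivity bookkeeping rather than a genuine difficulty, and it runs exactly as in the cited results of \cite{CT}.
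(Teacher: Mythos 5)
Your proposal is correct and follows essentially the same route as the paper, which disposes of the corollary by observing (via citation of Theorem~4.4.3 and Lemma~4.4.1 of \cite{CT}) that the stabilising condition produced for Theorem~\ref{stability theorem} is already induced by a bundle $\cM_l = \det p_*(q^*\cL^{\otimes l}|_{Z^m_{[A,B]}})$ built from the $\cL$ of Lemma~\ref{ample bundle L}, so that the forward implication is formal and the reverse implication inherits the freedom in the parameters $a_i,b_i,c_j,d_j$. Your Hilbert--Mumford weight bookkeeping is the argument underlying those cited results (with the small caveat that other support points of $Z_0$ in $\lambda$-fixed strata also contribute linearly in $l$, not boundedly, which is exactly why the sign analysis is run through the tunable linearisation parameters rather than through $l$ alone), so the proposal matches the paper's proof in substance.
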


\subsubsection*{Li-Wu and modified GIT stability.} We extend the notions of stability set up in \cite{CT}, namely LW and SWS stability, first on schemes, then generalise to stacks. In section \ref{choices of stacks}, we will introduce an additional stability condition needed to obtain proper algebraic stacks. Throughout this section we assume that $(A,B,n)$ is an unbroken pair.

\begin{definition}
    Let $Z$ be a length $m$ zero-dimensional subscheme in a fibre of $X[A,B]$. We say that $Z$ is \emph{Li-Wu (LW) stable} if $Z$ is smoothly supported and has finite automorphisms. We denote the LW stable locus by $H^m_{[A,B],\LW}$.
\end{definition}

\begin{remark}
    In the case of Hilbert schemes of points, this happens to correspond to the notion of stability defined in \cite{LW}. If we wanted to generalise these results to Hilbert schemes with non-constant polynomials, however, we would need to take the original definition of Li-Wu stability which can be found in \cite{LW} or \cite{CT}.
\end{remark}

We also extend the definition of SWS stability to this setting in the obvious way as follows.

\begin{definition}
    Let $Z$ be a length $m$ zero-dimensional subscheme in a fibre of $X[A,B]$. We say that $Z$ is \textit{weakly strictly stable} if there exists any $G$-linearised ample line bundle $\cM'$ on $H^m_{[A,B]}$ with respect to which $Z$ is GIT stable. The term \emph{strictly} is used here to emphasize that we exclude strictly semistable points. If $Z$ is also supported in the smooth locus of the fibre, then we say it is \textit{smoothly supported weakly strictly stable} (abbreviated to \emph{SWS stable} as before) and we denote the SWS stable locus by $H^m_{[A,B],\SWS}$.
\end{definition}

We then have the inclusion
\[
H^m_{[A,B],\SWS} \subset H^m_{[A,B],\LW},
\]
since, if points are GIT stable, they must have finite stabilisers. We now extend our definition of SWS stability to the stack $\mathfrak{X}'$.

\medskip
Given any $C$-scheme $S$, an object of $\mathfrak{X}'(S)$ is a pullback family $\xi^* X[A,B]$ for a morphism
\[
\xi \colon S \to C[A,B].
\]

\begin{definition}
    Let $\mathcal{X}\in \mathfrak{X}'(S)$ and let $Z$ be family of length $m$ zero-dimensional subschemes in $\cX$. The pair $(Z,\mathcal{X})$ is said to be \textit{SWS stable} if and only if $\cX \coloneqq \xi^* X[A,B]$ for some morphism $\xi \colon S \to C[A,B]$, where $Z$ is supported in the smooth locus of $\cX$ and there exists some  $G$-linearised ample line bundle on $H^m_{[A,B]}$ which makes $\xi_* Z$ be GIT stable.
\end{definition}

LW stability can be extended to the stack setting similarly. It is important to note that an object $\mathcal{X}\in \mathfrak{X}'(S)$ is an equivalence class under the action of $G$ and the isomorphisms set up in Section \ref{2nd stack} which identify isomorphic fibres. In particular, we highlight that the above definition means that the entire equivalence class given by a pair $(Z,\cX)$ is SWS stable if there is any representative of the equivalence class of $\cX$ for which this pair is SWS stable.

\begin{proposition}\label{SWS stab 2}
    There exists a pair $(Z,\cX)$ for some $\cX\in\mathfrak{X}'(k)$ (where $k$ is an algebraically closed field of characteristic zero as before) which is LW stable but not SWS stable. This means there is no representative of the equivalence class of $\cX$ for which this pair is SWS stable.
\end{proposition}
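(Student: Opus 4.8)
The plan is to exhibit an explicit pair $(Z,\cX)$ in one of the genuinely new expanded degenerations $X[A,B]$ — one that is unavailable in the construction $X[n]$ of \cite{CT} — where the failure of SWS stability is forced by a pure-type direction having no partner component. I would take the unbroken pair $A=\{1,2\}$, $B=\{3\}$ with $n=2$, so that $2\notin B$ and hence the component $\Delta_2^{(2)}=\Delta_2^{(n+1-1)}$ never occurs in $X[A,B]$, and let $\cX$ be the fibre over $(1,0,0)\in C[A,B]$. By the blow-up equations \eqref{BU1 eqns}, in this fibre the first blow-up $\langle x,t_1\rangle$ is inactive (as $t_1\neq 0$), so $\Delta_1^{(1)}$ is not expanded out, while $t_2=t_3=0$ expands out a single mixed bubble $\Delta_1^{(2)}=\Delta_2^{(1)}$ (the case $i=2\in A$, $j=3\in B$ of the mixed-type discussion). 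I would then let $Z$ consist of $m$ distinct reduced points in the smooth locus of $\cX$, at least one of which lies in the interior of this mixed bubble.

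First I would verify that $(Z,\cX)$ is LW stable. Smooth support holds by construction. For finite automorphisms, I would compute the stabilizer $G_0\subset G\cong\GG_m^2$ of the base point $(1,0,0)$: since $t_1\neq 0$ forces $\tau_1=1$ by Proposition \ref{group proposition}, while $t_2=t_3=0$ impose no condition, one has $G_0=\{(1,\tau_2)\}\cong\GG_m$, acting on $\cX$ precisely by scaling the mixed bubble (via $x_0^{(2)}\mapsto\tau_2 x_0^{(2)}$ and $y_1^{(1)}\mapsto\tau_2 y_1^{(1)}$). A support point in the interior of that bubble is moved by every nontrivial $\tau_2$, so $\Stab_{G_0}(Z)$ is finite, and $(Z,\cX)$ therefore has finite automorphisms.

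Next I would show that $(Z,\cX)$ is not SWS stable for this representative, using Theorem \ref{stability theorem}. Since $2\notin B$, the component $\Delta_2^{(2)}$ does not exist anywhere in $X[A,B]$, and in the fibre $\cX$ the component $\Delta_1^{(1)}$ is not expanded out, so $(\Delta_1^{(1)})^\circ=\emptyset$. Hence for $k=1$ the locus $(\Delta_1^{(1)})^\circ\cup(\Delta_2^{(2)})^\circ$ is empty, and no point of the support of $Z$ can lie in it. By Theorem \ref{stability theorem} no GIT stability condition makes $Z$ stable, so $(Z,\cX)$ is not SWS stable in this representative.

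Finally — and this is the step I expect to be the main obstacle — I would upgrade the conclusion from ``this representative'' to ``every representative,'' which is exactly the last sentence of the statement. The object $\cX$ is an equivalence class under the $G$-action and the identifications $\tau_{J,[A,B]}^{J',[A',B']}$, $\rho_{J,[A,B]}^{J',[A',B']}$, together with the embeddings $X[A,B]\hookrightarrow X[A',B']$ of \eqref{AB embed}. Since all of these are $G$-equivariant isomorphisms of expanded families (Proposition \ref{group proposition}(3) and the construction of Section \ref{2nd stack}), they carry the GIT-(semi)stable loci of the associated Hilbert schemes to one another, so GIT-stabilizability is an invariant of the equivalence class; the delicate point to check is that this equivariance genuinely matches the available linearisations $\cM_l$ across representatives. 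Geometrically $\cX$ has a single expanded bubble of mixed type together with a collapsed pure-type-$1$ direction having no type-$2$ partner, and I would argue, via the description in Proposition \ref{isom of fibres induced by isoms of base} and the tropical encoding of Remark \ref{remark unbroken trop}, that in any representative there remains an index $k'$ for which $(\Delta_1^{(k')})^\circ\cup(\Delta_2^{(n'+1-k')})^\circ$ is empty. For the embeddings into larger $X[A',B']$ the extra basis directions are set equal to $1$, hence are nonzero and base-constraining, so the components they would introduce are themselves collapsed and their $\tau$-parameters are rigidified; this only adds further unsatisfiable conditions rather than supplying the missing partner. Thus Theorem \ref{stability theorem} fails in every representative, and no representative of $(Z,\cX)$ is SWS stable.
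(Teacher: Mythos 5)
There is a genuine gap, and it is located exactly where your example diverges from the paper's. The paper's proof takes $\cX$ to be a fibre with a single expanded component of \emph{pure} type 1, with support in its interior but none in $Y_1$; you instead take the \emph{mixed} bubble $\Delta_1^{(2)}=\Delta_2^{(1)}$ in $X[\{1,2\},\{3\}]$. That choice is fatal, because a fibre whose only expanded component is a mixed bubble lies in the substack $\mathfrak{X}$ of \cite{CT}, and (as the remark immediately following the proposition records) LW-stable pairs there are SWS stable up to the equivalences of the stack. Concretely: your fibre over $(1,0,0)$ is equivalent, via the reordering isomorphisms $\tau_{J,[A,B]}^{J',[A',B']}$ and the embedding \eqref{AB embed}, to the fibre of $X[\{1\},\{2\}]$ over $(0,0)$, i.e.\ to the one-bubble fibre of $X[1]$. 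In that representative $n=1$ and the single condition of Theorem \ref{stability theorem} reads: support must meet $(\Delta_1^{(1)})^\circ\cup(\Delta_2^{(1)})^\circ$, and both of these \emph{are} the mixed bubble, whose interior contains a point of $Z$ by your construction. So your pair is SWS stable in this smaller representative, and the proposition's conclusion fails for it. Your closing paragraph asserts that passing between representatives "only adds further unsatisfiable conditions," but the equivalence class also contains \emph{smaller} representatives in which the superfluous collapsed direction $\Delta_1^{(1)}$ simply does not exist; this is precisely the case you did not rule out.

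The error that led you there is a misreading of Theorem \ref{stability theorem} for collapsed components: you claim $(\Delta_1^{(1)})^\circ=\emptyset$ because $\Delta_1^{(1)}$ is not expanded out. In the paper's usage a collapsed $\Delta$-component is \emph{equal to} certain components of the fibre (in your fibre, $\Delta_1^{(1)}=Y_2\cup Y_3$, since the first zero entry is $t_2$), and its interior is the interior of those components — if collapsed components had empty interior, no fibre with any collapsed direction could ever carry an SWS stable subscheme, trivialising the whole theory. The paper's own proof depends on this reading: its obstruction is not emptiness but the fact that for a pure type 1 expanded bubble, \emph{every} representative of the equivalence class retains at least one $\Delta_1$- and one $\Delta_2$-component equal to $Y_1$ (purity forces the last zero index $j'$ to lie in $A'\setminus B'$, so all $\Delta_2$-blow-up products are nonzero and $\Delta_1^{(j')},\dots$ are equal to $Y_1$), while $Z$ was chosen to have no support in $Y_1$; then Theorem \ref{stability theorem} fails at the relevant index in every representative. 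To repair your proof you should replace the mixed bubble by a pure type 1 bubble (e.g.\ the fibre of $X[\{1,2\},\{3\}]$ over $(0,0,1)$, with $Z$ supported in $(\Delta_1^{(1)})^\circ$ away from $Y_1$) and then run your final-paragraph analysis of representatives with the corrected interpretation of interiors; your LW-stability verification via the stabiliser computation is fine and carries over.
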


\begin{proof}
    This can be seen easily by describing an example where this happens. Take $\cX$ to be a fibre of base codimension 2 where exactly one $\Delta$-component is expanded out and it is pure of type 1. Then let $Z$ be a length $m$ zero dimensional subscheme with at least part of its support lying in the interior of this expanded $\Delta_1$ component, but no point of its support lying in $Y_1$. This will be LW stable but it will never be SWS stable because no matter what representative of the equivalence class of $\cX$ we choose, this representative will always have at least one $\Delta_1$- and one $\Delta_2$-component equal to $Y_1$. By Theorem \ref{stability theorem}, the pair cannot be SWS stable.
\end{proof}

\begin{remark}
    If we restrict ourselves to the substack $\mathfrak{X}$, the LW and SWS stability conditions are the same up to the equivalences of the stack. In that context, if there is at least one point of the support of $Z$ in the union $(\Delta_1^{(i)})^\circ \cup (\Delta_2^{(n+1-i)})^\circ$ for every $i$ for which these components are expanded out in $\cX$, then $(Z,\cX)$ is SWS stable. This is because in $\mathfrak{X}$ we have restricted the type of modified fibre we can have, so that examples like the one given in the proof of Proposition \ref{SWS stab 2} are not included. We may easily see that in this restricted setting, given any such pair $(Z,\cX)$, there is an equivalent pair under the isomorphisms of the stack $\mathfrak{X}$ satisfying the condition that at least one point of the support of $Z$ lies in the union $(\Delta_1^{(i)})^\circ \cup (\Delta_2^{(n+1-i)})^\circ$ for every $i$. Such a pair is therefore SWS stable by Theorem \ref{stability theorem}.
\end{remark}

\subsubsection*{Stacks of stable objects.}
Let us denote by $\mathfrak{N}^m_{\SWS}$ and $\mathfrak{N}^m_{\LW}$ the stacks of SWS and LW stable length $m$ zero-dimensional subschemes in $\mathfrak{X}'$. Let $S$ be a $C$-scheme. An object of $\mathfrak{N}^n_{\SWS}(S)$ is defined to be a pair $(Z,\cX)$, where $\cX\in \mathfrak{X}'(S)$ and $Z$ is an $S$-flat SWS stable family in $\cX$. Similarly, an object of $\mathfrak{N}^n_{\LW}(S)$ is a pair $(Z,\cX)$, where $\cX\in \mathfrak{X}'(S)$ and $Z$ is an $S$-flat LW stable family in $\cX$.

\begin{remark}
    Unlike the stacks $\mathfrak{M}^m_{\SWS}$ and $\mathfrak{M}^m_{\LW}$ of \cite{CT}, the stacks $\mathfrak{N}^m_{\SWS}$ and $\mathfrak{N}^m_{\LW}$ are universally closed but not separated, meaning that stable families of length $m$ zero-dimensional subschemes in these stacks may have more than one limit. As a consequence, Li-Wu stability and Maulik-Ranganathan stability do not coincide on these stacks. Indeed, it does not yet make sense here to speak of Maulik-Ranganathan stability on the stack $\mathfrak{X}'$ as we have not defined a notion of tube component and a choice of stable limit among all possible limits for a given a family $(Z,\cX)$. As we will see, in order to construct separated stacks we will need to select one of two options. The first is to identify all choices of representatives for a limit, which will yield a stack which is no longer algebraic. This parallels work of Kennedy-Hunt \cite{K-H}. The second is to make a choice of substack which picks out exactly one representative for each limit and these substacks will be isomorphic to the underlying stacks produced by the methods of Maulik and Ranganathan in \cite{MR} (the word underlying here is used to signify that we do not speak of logarithmic structures).
\end{remark}

\subsection{Universal closure}

We will now show that the stacks $\mathfrak{N}^m_{\SWS}$ and $\mathfrak{N}^m_{\LW}$ are universally closed. Part of this proof follows easily from the fact that $\mathfrak{X}$ is a substack of $\mathfrak{X}'$ and universal closure results of \cite{CT}; we will, however, give a more detailed proof of this statement here, as this will allow us to illustrate the fact that neither stack is separated, and where these different choices of limit representatives for a family of length $m$ zero-dimensional subschemes occur. We start by recalling the following definition from \cite{CT}.

\begin{definition}\label{extension}
    Let $S\coloneqq \Spec R \to C$, where $R$ is some discrete valuation ring and let $\eta$ denote the generic point of $S$. Now, let $(Z,\cX)$ be a pair where $\cX\in \mathfrak{X}(S)$ and $Z$ is an $S$-flat family of length $m$ zero-dimensional subschemes in $\cX$. Let $S'\to S$ be some finite base change and denote the generic and closed points of $S'$ by $\eta'$ and $\eta'_0$ respectively. We say that a pair $(Z'_{\eta_0'},\cX'_{\eta_0'})$ is an \textit{extension} of $(Z_\eta,\cX_\eta)$ if there exists such a base change and $(Z'_{\eta_0'},\cX'_{\eta_0'})$ is the restriction to $\eta'_0$ of some $S'$-flat family $(Z',\cX')$ with $\cX'\in\mathfrak{X}'(S')$ such that $Z_\eta \times_{\eta} \eta' \cong Z'_{\eta'}$ and $\cX_\eta \times_{\eta} \eta' \cong \cX'_{\eta'}$.
\end{definition}

\begin{proposition}\label{univ closed 2}
The stack $\mathfrak{N}^m_{\SWS}$ is universally closed.
\end{proposition}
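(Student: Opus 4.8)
The plan is to establish universal closure via the valuative criterion for stacks. Following Definition~\ref{extension}, I would take a discrete valuation ring $R$ with $S = \Spec R \to C$, generic point $\eta$, and a generic family $(Z_\eta,\cX_\eta)$ of SWS stable length $m$ zero-dimensional subschemes. The goal is to produce, after a suitable finite base change $S'\to S$, an extension $(Z'_{\eta_0'},\cX'_{\eta_0'})$ which is again SWS stable, thereby filling in the limit over the closed point. Since $\mathfrak{X}$ is a substack of $\mathfrak{X}'$, the strategy is to leverage the universal closure results of \cite{CT} for the corresponding statement over $\mathfrak{X}$, but to reprove it here with enough detail to expose where the choice of limit representative is made (this being the source of the later non-separatedness).

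The key steps, in order, are as follows. First I would reduce to the local model by choosing some unbroken pair $(A,B,n)$ and a morphism $\xi\colon S \to C[A,B]$ representing $\cX_\eta$, so that the generic family lives in $\Hilb^m(X[A,B]/C[A,B])$; the generic point maps into the locus $\pi^{-1}(X^\circ)$ where all basis directions are nonzero, so the family is honestly a length $m$ subscheme in a smooth surface. Second, I would take the scheme-theoretic closure $\Bar{Z}$ of $Z_\eta$ inside the relative Hilbert scheme $H^m_{[A,B]}$, giving a limit over the closed point; this limit may fail to be smoothly supported (some support may land in $\Sing(X_0)$). Third, the heart of the argument: after a finite base change (to adjust valuations so that the limit lands in the correct stratum) and possibly passing to a larger expanded degeneration $X[A',B']$ via the embeddings \eqref{AB embed}, one applies the semistable-reduction-type mechanism to push the support off the singular locus into the interior of expanded $\Delta$-components. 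Here I expect to invoke Theorem~\ref{stability theorem}: one chooses the expansion so that the limit has a point of support in $(\Delta_1^{(k)})^\circ \cup (\Delta_2^{(n+1-k)})^\circ$ for every relevant $k$, which is exactly the condition guaranteeing the existence of a GIT stability condition making the limit stable, hence SWS stable.

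The main obstacle, which I would flag explicitly, is the third step: showing that a finite base change together with the enlargement of the expansion actually suffices to realise a smoothly supported, GIT-stable limit, rather than merely some limit in the closure. This requires a careful tropical or valuation-theoretic bookkeeping of how the support of the family approaches the singular locus, i.e.\ tracking the orders of vanishing of the defining equations in the \'etale local coordinates $x,y,z$ and translating these into the correct subdivision of $\trop(X_0)$ (the correct choice of $A',B'$). The content is that the tropicalisation of the generic family dictates which $\Delta$-components must be expanded out, and the unbroken condition (Definition~\ref{unbroken}) ensures this data is realised by an admissible expanded degeneration in $\mathfrak{X}'$.

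Finally, I would remark that the construction of the limit is \emph{not} canonical: at the stage where one enlarges the expansion or chooses how to distribute multiplicities among bubbles, there can be several non-equivalent SWS stable extensions of the same $(Z_\eta,\cX_\eta)$. This is precisely the failure of separatedness anticipated in the remark preceding the proposition, and exhibiting this multiplicity of limits is what the detailed proof is meant to make transparent, in contrast to the more economical argument available by restriction from $\mathfrak{X}$.
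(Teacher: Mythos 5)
Your proposal follows essentially the same route as the paper's proof: after a finite base change chosen so that the tropical data $\trop(Z)\cap\trop(X_0)$ becomes integral, one constructs the morphism $\xi\colon S'\to C[A,B]$ from the valuations of the basis directions, reads off the vanishing orders $(e_{i,1},e_{i,2},e_{i,3})$ of each irreducible component of $Z'$ in the \'etale local coordinates $x,y,z$, lets these dictate which indices must lie in $A$ and $B$ so that the limit is smoothly supported, and then invokes Theorem~\ref{stability theorem} to conclude SWS stability; your closing remark about non-canonicity matches the paper's observation that components limiting into the $\pi^*(Y_1\cap Y_2)^\circ$ locus admit three admissible choices (pure type $1$, pure type $2$, or mixed), which is precisely the source of non-separatedness. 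The one genuine omission is your blanket assumption that the generic point maps into $\pi^{-1}(X^\circ)$ where all basis directions are nonzero: objects of $\mathfrak{N}^m_{\SWS}(\eta)$ also include pairs whose generic fibre $\cX_\eta$ is itself a modified special fibre, and universal closure requires extending these as well --- the paper's proof makes the same restriction explicitly and defers that remaining case to Corollary~\ref{corollary limits of special elements}, whereas your argument passes over it silently.
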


\begin{proof}
Let $S\coloneqq \Spec R \to C$, where $R$ is some discrete valuation ring with uniformising parameter $w$ and quotient field $k$. We denote by $\eta$ and $\eta_0$ the generic and closed points of $S$ respectively. Let $(Z, \cX)$ be an $S$-flat family of length $m$ zero-dimensional subschemes such that $\cX\in \mathfrak{X}'(S)$ and $(Z_\eta, \cX_\eta)\in \mathfrak{N}^m_{\SWS}(\eta)$. As in the proof of Proposition 6.1.3 of \cite{CT}, we show that there exists a finite base change $S'\coloneqq \Spec R'\to S$, for some discrete valuation ring $R'$ and a pair $(Z', \cX')\in \mathfrak{N}^m_{\SWS}(S')$ satisfying the following condition. We denote by $\eta'$ and $\eta_0'$ the generic and closed points of $S'$ respectively. Then $S'$ and $(Z', \cX')$ are chosen such that we have an equivalence $\cX'_{\eta'} \cong \cX_\eta \times_\eta \eta'$ which induces an equivalence $Z'_{\eta'} \cong Z_\eta \times_\eta \eta'$.

\medskip
We start by repeating the following steps of Proposition 6.1.3 of \cite{CT}:
\begin{itemize}
    \item We choose the appropriate base change $S'\coloneqq \Spec R'\to S$ for some discrete valuation ring $R'$, which corresponds to placing the triangle $\trop(X_0)$ at a suitable height within the cone $\trop(X)$ so that the intersection vertices $\trop(Z)\cap \trop(X_0)$ are integral (see proof of Proposition 6.1.3 of \cite{CT} for more details).
    \item inductively construct an element
        \begin{equation}\label{base expression}
        (t_1,\dots, t_{n+1}) = (f_1u^{g_1},\dots, f_{n+1}u^{g_{n+1}}) \in \AA^{n+1},
        \end{equation}
        where $u$ is the uniformising parameter of $R'$ and $f_i\in (R')^\times$, i.e.\ they are units in $R'$.
        This, paired with a choice of unbroken pair $(A,B)$ for $n+1$, will determine a morphism $\xi\colon S'\to C[A,B]$ such that the pullback $\xi^*X[A,B]$ defines the required family $\cX'$.
\end{itemize}

The equation \eqref{base expression} does not on its own determine an element of $\mathfrak{X}'$ (unlike $\mathfrak{X}$). In order to construct a stable extension for $(Z,\cX)$, we still need to make a suitable choice of sets $A$ and $B$. As Proposition 6.1.3 of \cite{CT} shows, the maximal choices $A=\{1,\dots, n\}$ and $B=\{2,\dots, n+1\}$ would work, but these are not the only possible choices, as in this construction we are not forced to blow up both $x$ and $y$ along every basis directions. We will see that for this reason the choice of stable extension is not necessarily unique.

 \medskip
 Given different families of subschemes $Z$, we will describe the possible choices of $A$ and $B$ which give rise to stable extensions $(Z'_{\eta_0'},\cX'_{\eta_0'})$. Recall, from the proof of Proposition 6.1.3 in \cite{CT}, that the subscheme $Z'$ is a union of irreducible components $Z'_i$ and that $u$ was chosen such that each $Z'_i$ can be written locally in terms of its $x,y$ and $z$ coordinates as
\begin{equation}\label{xyz expression}
    \{ (c_{i,1}u^{e_{i,1}}, c_{i,2}u^{e_{i,2}}, c_{i,3}u^{e_{i,3}})\},
\end{equation}
for some $e_{i,j}\in \ZZ$ and $c_{i,j}\in (R')^\times$. As $Z'$ is a flat family given by the above expression, it must satisfy the equations
\begin{align}
        x &= c_{i,1}u^{e_{i,1}}, \nonumber \\
        y &= c_{i,2}u^{e_{i,2}}, \label{subscheme equations} \\
        z &= c_{i,3}u^{e_{i,3}} \nonumber
\end{align}
also over the closed point. Part of our stability conditions is to require subschemes to be smoothly supported, so, if more than one element of the set $\{ e_{i,1},e_{i,2},e_{i,3} \}$ is nonzero, then $Z'_{\eta'_0}$ must be supported in a component blown up along the vanishing of both sides of the relevant equations \eqref{subscheme equations}. This imposes some restrictions on the $A$ and $B$ we can choose given a certain $Z$, but as we will see there may still be choices to make.

\medskip
First, we discuss limits where the choice is unique. If $e_{i,1}$ and $e_{i,3}$ are nonzero but $e_{i,2}=0$, then the support of $(Z'_i)_{\eta'_0}$ lies in the $\pi^*(Y_1\cap Y_3)^\circ$ locus. In our construction, there is only one way of expanding a $\Delta$-component in this locus which will contain the support of $(Z'_i)_{\eta'_0}$ in its interior, namely that which in the localisation is given by blowing up the ideal $\langle x,u^{e_{i,1}} \rangle$. There is exactly one $j$ such that $t_1\cdots t_j = cu^{e_{i,1}}$, for some $c\in (R')^\times$, where $t_k$ are as in the expression \eqref{base expression}. It follows that $j\in A$.

\medskip
Similarly, if  $e_{i,2}$ and $e_{i,3}$ are nonzero but $e_{i,1}=0$, then the support of $(Z'_i)_{\eta'_0}$ lies in the $\pi^*(Y_2\cap Y_3)^\circ$ locus. Again, there is only one way of expanding a $\Delta$-component in this locus such that the flat limit of $(Z'_i)_{\eta'}$ is contained in the interior of this $\Delta$-component. In the localisation, this is given by the blow-up of the ideal $\langle y,u^{e_{i,2}} \rangle$. Now, as in the previous example, there is exactly one $j$ such that $t_{j}\cdots t_{n+1} = cu^{e_{i,2}}$, for some $c\in (R')^\times$, where $t_k$ are as in the expression \eqref{base expression} (the values of $c$ and $j$ may, of course, differ from those in the first example). It follows that $j\in B$.

\medskip
Finally, if $e_{i,1}$, $e_{i,2}$ and $e_{i,3}$ are all nonzero then the support of $(Z'_i)_{\eta'_0}$ lies in the $\pi^*(Y_1\cap Y_2\cap Y_3)$ locus which is pulled back from a codimension 2 locus in $X_0$. This implies that in order for $(Z'_i)_{\eta'_0}$ to be smoothly supported, it must be contained in a $\PP^1\times \PP^1$ bubble, given by blowing up both ideals $\langle x,u^{e_{i,1}} \rangle$ and $\langle y,u^{e_{i,2}} \rangle$. If $j$ is such that $t_1\cdots t_j = cu^{e_{i,1}}$ and $k$ is such that $t_{k}\cdots t_{n+1} = cu^{e_{i,2}}$, then we must include both $j\in A$ and $k\in B$.

\medskip
Now, on the other hand, if $e_{i,1}$ and $e_{i,2}$ are nonzero but $e_{i,3}=0$, then the support of $(Z'_i)_{\eta'_0}$ lies in the $\pi^*(Y_1\cap Y_2)^\circ$ locus and there are several possible ways to expand out a component in this locus which would contain $(Z'_i)_{\eta'_0}$ in its interior. Indeed, $(Z'_i)_{\eta'_0}$ may lie in a $\Delta_1$- or $\Delta_2$-component of pure type or in a component of mixed type $\Delta_1= \Delta_2$. In other words, if $j$ is such that $t_1\cdots t_j = cu^{e_{i,1}}$, for some $c\in (R')^\times$ and therefore $t_{j+1}\cdots t_{n+1} = du^{e_{i,2}}$, for some $d\in (R')^\times$, then we may pick any of the following three options
\begin{align*}
    &j\in A \quad \textrm{and} \quad j+1\notin B, \\
    &j\notin A \quad \textrm{and} \quad j+1\in B, \\
    &j\in A \quad \textrm{and} \quad j+1\in B.
\end{align*}
Of course, whether or not $j$ and $j+1$ are contained in $A$ and $B$ will depend also on the other $Z_i$ and is constrained by the need for $(A,B,n)$ to be unbroken, so for certain $Z$, some of these choices are removed.

\medskip 
By making compatible choices as above, we may choose an extension $(Z'_{\eta_0},\cX'_{\eta_0})$ such that at least one point of the support of $Z'_{\eta_0}$ is in the union $(\Delta_1^{(i)})^\circ \cup (\Delta_2^{(n+1-i)})^\circ$ for every $i$ for which either of these components exists in $\cX'_{\eta_0}$. By Theorem \ref{stability theorem}, this limit is SWS stable.

\medskip
 This shows that if $(Z_\eta, \cX_\eta)$ is pulled back from a fibre above a point $(t_1,\dots, t_{n+1})$ in some $C[A,B]$ whose entries are all invertible, then $(Z_\eta, \cX_\eta)$ has an SWS stable extension. See Corollary \ref{corollary limits of special elements} for a proof that there exists an extension if $\cX_\eta$ is a modified special fibre.
\end{proof}

\begin{corollary}\label{univ closed 3}
    The stack $\mathfrak{N}^m_{\LW}$ is universally closed.
\end{corollary}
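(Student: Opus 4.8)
The plan is to deduce the universal closure of $\mathfrak{N}^m_{\LW}$ directly from the proof of Proposition \ref{univ closed 2}, using the inclusion $H^m_{[A,B],\SWS}\subset H^m_{[A,B],\LW}$ established above. The key observation is that universal closure is verified via the valuative criterion: given $S=\Spec R$ for a discrete valuation ring $R$ mapping to $C$, with generic point $\eta$, and a pair $(Z_\eta,\cX_\eta)\in\mathfrak{N}^m_{\LW}(\eta)$, we must produce a finite base change $S'\to S$ and an LW stable extension $(Z'_{\eta_0'},\cX'_{\eta_0'})$ over the closed point $\eta_0'$.

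First I would note that every SWS stable object is automatically LW stable, so $\mathfrak{N}^m_{\SWS}$ is a substack of $\mathfrak{N}^m_{\LW}$. Given a generic pair $(Z_\eta,\cX_\eta)$ that is LW stable, the argument of Proposition \ref{univ closed 2} applies verbatim: the choice of base change $S'\to S$ placing $\trop(X_0)$ at an integral height, the inductive construction of the element \eqref{base expression} in $\AA^{n+1}$, and the analysis of the coordinate expressions \eqref{xyz expression} for each component $Z_i'$ all go through unchanged, since they depend only on the geometry of the family $Z$ and not on which stability condition we impose. In particular, the smoothly supported condition, which is part of LW stability, still forces the limit of each $(Z_i')_{\eta'}$ into the interior of a suitable expanded $\Delta$-component, determining the necessary membership conditions on $A$ and $B$.

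The only difference is in the final step, and this makes the $\LW$ case \emph{easier} rather than harder. In the SWS proof one must make compatible choices of $A$ and $B$ so that at least one point of the support lands in $(\Delta_1^{(i)})^\circ\cup(\Delta_2^{(n+1-i)})^\circ$ for every relevant $i$, in order to invoke Theorem \ref{stability theorem} and certify GIT stability. For LW stability we need only that the extension be smoothly supported with finite automorphisms; any of the admissible choices of unbroken pair $(A,B)$ constructed in the proof above yields such an extension, since smooth support is guaranteed by construction and finiteness of automorphisms holds for a smoothly supported length $m$ subscheme. Thus the required LW stable extension exists, and in fact the SWS stable extension produced by Proposition \ref{univ closed 2} is itself an LW stable extension via the inclusion $H^m_{[A,B],\SWS}\subset H^m_{[A,B],\LW}$.

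I expect no genuine obstacle here: the substance of the argument is already contained in Proposition \ref{univ closed 2}, and the corollary amounts to observing that relaxing from SWS to LW stability only enlarges the class of admissible limits. The one point deserving care is that the valuative criterion for universal closure does not require uniqueness of the limit; since $\mathfrak{N}^m_{\LW}$ is explicitly \emph{not} separated, multiple non-equivalent LW stable extensions may exist, but existence of at least one suffices, and this is exactly what the proof of Proposition \ref{univ closed 2} delivers. As in that proof, the case where $\cX_\eta$ is itself a modified special fibre is handled by Corollary \ref{corollary limits of special elements}.
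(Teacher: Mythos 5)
Your proposal is correct and takes essentially the same route as the paper, whose entire proof is the observation that SWS stable pairs are LW stable, so the SWS stable extension produced in Proposition \ref{univ closed 2} (together with Corollary \ref{corollary limits of special elements} when $\cX_\eta$ is a modified special fibre) already serves as the required LW stable limit, existence being all the valuative criterion for universal closure demands. One small caution: your aside that ``finiteness of automorphisms holds for a smoothly supported length $m$ subscheme'' for \emph{any} admissible choice of $(A,B)$ overstates matters---smooth support alone does not guarantee finite automorphisms when an expanded $\Delta$-component carries no point of the support, which is exactly why LW stability imposes it as a separate condition---but your argument does not rest on this, since the SWS stable extension is GIT stable and hence has finite stabilisers.
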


\begin{proof}
    As SWS stable pairs must be LW stable, this follows immediately.
\end{proof}

\begin{remark}
    The proof of Proposition \ref{univ closed 2} shows how the stacks $\mathfrak{N}^m_{\SWS}$ and $\mathfrak{N}^m_{\LW}$ fail to be separated by highlighting several possible choices of limit representatives. We should note also that it is possible for such a pair $(Z'_{\eta_0},\cX'_{\eta_0})$ to have finite automorphisms while having expanded $\Delta$-components in $\cX'_{\eta_0}$ which contain no point of the support of $Z'_{\eta_0}$. This may happen for example if, in the expression \eqref{xyz expression}, both $e_{i,1}$ and $e_{i,3}$ are nonzero but $e_{i,2}=0$. Then, in order for $Z'_{\eta_0}$ to be smoothly supported in $\cX'_{\eta_0}$, we must let $j\in A$ (using the notation of the proof) so as to expand out the component $\Delta_1^{(j)}$; but we may also let $j+1\in B$ so as to have the component $\Delta_2^{(n+1-j)}$ expanded out in $\cX'_{\eta_0}$, even if the latter contains no point of the support. As the same $\GG_m$ acts on $\Delta_1^{(j)}$ and $\Delta_2^{(n+1-j)}$, this will not add any automorphisms. In fact, this is exactly the type of blow-up we made in \cite{CT}.
\end{remark}

\section{Constructing separated stacks.}\label{separated stack section}
We are now in a position to construct separated stacks. As mentioned earlier, there are two ways of doing this. Firstly, one can avoid making any choices and instead identify all possible limits of a given family in the stacks  $\mathfrak{N}^m_{\SWS}$ and $\mathfrak{N}^m_{\LW}$. We denote these by $\Bar{\mathfrak{N}^m_{\SWS}}$ and $\Bar{\mathfrak{N}^m_{\LW}}$ respectively. This procedure is in a sense more general, as it avoids making choices. As we will see, this comes at the cost of these stacks no longer being algebraic. The second way to obtain separated stacks is to make a systematic choice of representative for a given limit. This will be done by adding an additional stability condition.

\subsection{Non-algebraic proper stacks}\label{identifying limits}

\subsubsection*{Non-unique associated pairs.}
We recall the notion of \emph{associated pairs} from \cite{CT}.

\begin{definition}
    Let $(Z, \cX)\in \mathfrak{N}^m_{\SWS}(k)$ (or $\mathfrak{N}^m_{\LW}(k)$), where $k$ is an algebraically closed field as before. We will say that a pair $(Z, \cX)$ is an \textit{associated pair} for a configuration of vertices in $\trop(X_0)$ (or rays through these vertices in $\trop(X)$) if there is a 1-to-1 correspondence between these vertices and the non-empty bubbles in $(Z, \cX)$ given as follows: to each such vertex $(e_{i,1}, e_{i,2}, e_{i,3})\in \trop(X_0)$ corresponds a non-empty bubble blown-up along vanishing of both sides of the equations \eqref{subscheme equations}.
\end{definition}

The problem of non-separatedness comes from there being more than one associated pair in $\mathfrak{N}^m_{\SWS}$ and $\mathfrak{N}^m_{\LW}$ for a given configuration of vertices in $\trop(X_0)$. For example, in Figure \ref{assoc pairs}, we see that there are three possible stable fibre-subscheme pairs associated to the tropical picture on the left. In other words, there are several possible blow-ups of $X_0$ which create a bubble corresponding to the vertex $\trop(Z)$ in the picture on the left. Moreover, there are exactly three such blow-ups for which this bubble being the only component containing a point of the support of $Z$ creates a stable pair. The red point in each of the three geometric pictures on the right corresponds to the support of the subscheme $Z$.

\begin{figure} 
    \begin{center}  

    \begin{tikzpicture}[scale=0.7]
        \draw   

        (-1.732, -1) -- (0,2)       
        (-1.732, -1) -- (1.732, -1)
        
        (1.732, -1) -- (0,2)
        ;
        \draw (2.4,0.5) node[anchor=center, color = red]{$\trop(Z)$};
        \filldraw[red] (0.866,0.5) circle (2pt) ;
        \filldraw[black] (0,2) circle (2pt) ;
        \filldraw[black] (-1.732, -1) circle (2pt) ;
        \filldraw[black] (1.732, -1) circle (2pt) ;
        \draw (0,2.3) node[anchor=center]{$Y_1$};
        \draw (-1.832, -1.3) node[anchor=center]{$Y_3$};
        \draw (1.832, -1.3) node[anchor=center]{$Y_2$};
    \end{tikzpicture}
    \hspace{0.3cm}    
    \begin{tikzpicture}[scale=0.7]
        \draw   (-0.866,0.5) -- (0,0)
        (0,0) -- (0,2)       
        (-0.866,0.5) -- (-0.866,2)
        (-1.732, -1) -- (0,0)
        (-2.165,-0.249) -- (-0.866,0.5)
        
        (1.732, -1) -- (0,0);
        \filldraw[red] (-0.433,1) circle (2pt) ;
        \draw (-1.7,1) node[anchor=center]{$Y_1$};
        \draw (1,1) node[anchor=center]{$Y_2$};
        \draw (0,-0.8) node[anchor=center]{$Y_3$};        
    \end{tikzpicture}
    \hspace{0.3cm}
     \begin{tikzpicture}[scale=0.7]
        \draw   (0.866,0.5) -- (0,0)
        (0,0) -- (0,2)       
        (0.866,0.5) -- (0.866,2)
        (-1.732, -1) -- (0,0)
        (2.165,-0.249) -- (0.866,0.5)
        
        (1.732, -1) -- (0,0);
        \filldraw[red] (0.433,1) circle (2pt) ;
        \draw (-1,1) node[anchor=center]{$Y_1$};
        \draw (1.7,1) node[anchor=center]{$Y_2$};
        \draw (0,-0.8) node[anchor=center]{$Y_3$};
        
    \end{tikzpicture}
    \hspace{0.3cm}
    \begin{tikzpicture}[scale=0.7]
        \draw   (0.866,0.5) -- (0,0) 
        (-0.866,0.5) -- (0,0) 
        (0.866,0.5) -- (0.866,2)
        (-0.866,0.5) -- (-0.866,2)
        (-1.732, -1) -- (0,0)
        (2.165,-0.249) -- (0.866,0.5)
        (-2.165,-0.249) -- (-0.866,0.5)
        (1.732, -1) -- (0,0);
        \filldraw[red] (0,1) circle (2pt) ;
        \draw (-1.7,1) node[anchor=center]{$Y_1$};
        \draw (1.7,1) node[anchor=center]{$Y_2$};
        \draw (0,-0.8) node[anchor=center]{$Y_3$};
        
    \end{tikzpicture}
    
    \end{center}
    \caption{Three different stable associated pairs for the same tropical picture.}
    \label{assoc pairs}
\end{figure}
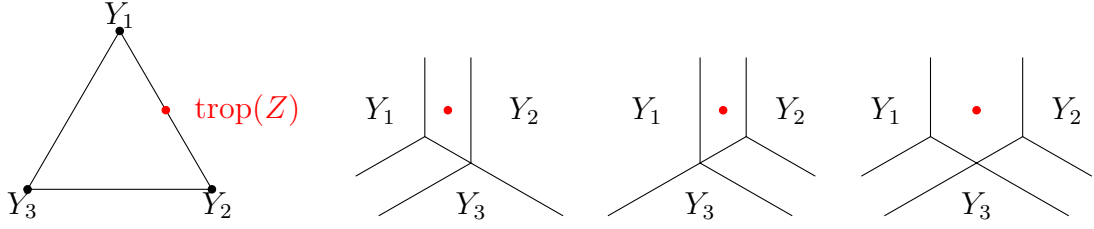

\subsubsection*{The stacks $\Bar{\mathfrak{N}^m_{\SWS}}$ and $\Bar{\mathfrak{N}^m_{\LW}}$.}
Let $S\coloneqq\Spec R\to C$ for some valuation ring $R$ as before and let $\eta$ and $\eta_{0}$ denote the generic and closed points of $S$.
We construct the separated stacks $\Bar{\mathfrak{N}^m_{\SWS}}$ and $\Bar{\mathfrak{N}^m_{\LW}}$ from the stacks $\mathfrak{N}^m_{\SWS}$ and $\mathfrak{N}^m_{\LW}$ by introducing additional equivalences on these stacks. It is sufficient to define these equivalences over the scheme $S$ as we already have a well-defined notion of equivalence up to base change on these stacks.

\medskip
Let $(Z_\eta, \cX_\eta) \in \mathfrak{N}^m_{\SWS}(\eta)$ (or $\mathfrak{N}^m_{\LW}(\eta)$), where all basis directions are invertible at the point $\eta$. Then $(Z_\eta, \cX_\eta)$ will also be an element of  $\Bar{\mathfrak{N}^m_{\SWS}}(\eta)$ (or $\Bar{\mathfrak{N}^m_{\LW}}(\eta)$). We then define to be equivalent in  $\Bar{\mathfrak{N}^m_{\SWS}}$ (or $\Bar{\mathfrak{N}^m_{\LW}}$) all stable (for the respective stability conditions) extensions of this pair. In other words, any two pairs in $\mathfrak{N}^m_{\SWS}(k)$ (or $\mathfrak{N}^m_{\LW}(k)$) which are associated pairs for the same configuration of vertices in $\trop(X_0)$ (or rays in $\trop(X)$) must belong to the same equivalence class in $\Bar{\mathfrak{N}^m_{\SWS}}(k)$ (or $\Bar{\mathfrak{N}^m_{\LW}}(k)$).

\medskip
The only choices here come from the construction of the stack $\mathfrak{X}'$, but all subsequent choices are avoided. This, however, implies that the objects of the stack are equivalence classes of pairs $(Z,\cX)$, where for two representatives $(Z,\cX)$ and $(Z',\cX')$ of a given equivalence class, there may exist no isomorphism of schemes $\cX \cong \cX'$. Defining this more canonical stack therefore comes at the cost of our stack being no longer algebraic. This parallels the canonical choice of underlying stack in Kennedy-Hunt's construction of a logarithmic Quot scheme \cite{K-H}.

\begin{theorem}
The stacks $\Bar{\mathfrak{N}^m_{\SWS}}$ and $\Bar{\mathfrak{N}^m_{\LW}}$ have finite automorphisms and are proper.
\end{theorem}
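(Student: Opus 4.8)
Writing $\Bar{\mathfrak{N}^m}$ for either of the two stacks, the plan is to verify directly the conditions that, for these non-algebraic stacks, amount to properness together with finiteness of automorphisms: universal closure, finiteness of automorphism groups, and separatedness. Universal closure is immediate from the construction as a quotient. Indeed, an object of $\Bar{\mathfrak{N}^m}(\Spec K)$ lifts to a family in $\mathfrak{N}^m_{\SWS}(\Spec K)$ (respectively $\mathfrak{N}^m_{\LW}(\Spec K)$), which by Proposition \ref{univ closed 2} (respectively Corollary \ref{univ closed 3}) admits an extension over the closed point after a finite base change; its image in the quotient supplies the required extension. Finite type is likewise inherited from the underlying relative Hilbert schemes, since for fixed $m$ only finitely many expansions are relevant. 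It thus remains to treat finiteness of automorphisms and separatedness.

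For finiteness of automorphisms, I would use that every object of $\Bar{\mathfrak{N}^m}$ over a field is, by the proof of Proposition \ref{univ closed 2}, represented by a stable associated pair $(Z,\cX)$, and that its equivalence class is obtained by identifying the finitely many associated pairs attached to a fixed configuration of vertices in $\trop(X_0)$, as in Figure \ref{assoc pairs}. Each such representative has finite automorphisms: by definition in the LW case, and because GIT stable points have finite stabilisers in the SWS case. The identifications are canonical, being induced by the identity on the common generic fibre from which these limits arise, so an automorphism of the equivalence class is determined by its action on any single representative. Hence the automorphism group of the class embeds into a finite group and is itself finite.

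The heart of the argument is separatedness, which I would establish through the valuative criterion. Let $R$ be a discrete valuation ring with fraction field $K$, and suppose two objects of $\Bar{\mathfrak{N}^m}(\Spec R)$ agree over $\Spec K$. When the common generic fibre has all basis directions invertible, both special fibres are stable extensions of the same generic pair and are therefore identified by the very equivalences defining $\Bar{\mathfrak{N}^m}$; the limit is thus unique, and uniqueness of the identifying isomorphism follows from the finiteness of automorphisms established above. The remaining case, in which the generic fibre is itself a modified special fibre, is more delicate: here I would invoke the tropical criterion for smoothings of Section \ref{smoothings section} to exhibit any two candidate special extensions as limits within a common smoothing, so that transitivity of the equivalence relation again forces the two limits to coincide. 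Corollary \ref{corollary limits of special elements} guarantees that the relevant extensions exist, which closes the valuative criterion.

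The main obstacle I anticipate is precisely this special-to-special case of separatedness. Whereas degenerations of generic fibres are handled tautologically by the definition of the equivalence, one must verify that no spurious limit arises when degenerating one modified special fibre to another, and that the equivalence relation genuinely identifies all of them; this is exactly what the tropical smoothing analysis is designed to control, and it is the technical crux of the proof. A secondary point requiring care is to confirm that finiteness of automorphisms persists through these further identifications, since it is the gluing of geometrically non-isomorphic representatives --- the feature that already costs us algebraicity --- that could a priori enlarge automorphism groups.
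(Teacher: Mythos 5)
Your outer decomposition (universal closure via Proposition \ref{univ closed 2} and Corollary \ref{univ closed 3}, then finite automorphisms, then separatedness via the valuative criterion) matches the paper, and the first two parts are essentially the paper's argument. One caveat on automorphisms: the paper's justification is simpler and cleaner than yours --- the equivalences added in forming $\Bar{\mathfrak{N}^m_{\SWS}}$ and $\Bar{\mathfrak{N}^m_{\LW}}$ are \emph{not isomorphisms} of schemes, so they add no automorphisms whatsoever, and finiteness is inherited directly from $\mathfrak{N}^m_{\SWS}$ and $\mathfrak{N}^m_{\LW}$. Your phrase ``the identifications are canonical, being induced by the identity on the common generic fibre, so an automorphism of the class is determined by its action on any single representative'' presupposes that an automorphism of an equivalence class acts on every representative; since there are in general no maps at all between the representatives (this is exactly why the stacks fail to be algebraic), that action does not literally exist, and your argument should be replaced by the paper's observation.

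The genuine gap is in the separatedness step, precisely where you flag it. Your case of invertible basis directions is fine (it is tautological from the definition of the equivalence), but for the special-to-special case you only gesture at the smoothing criterion of Section \ref{smoothings section} and ``a common smoothing plus transitivity'', without explaining how this pins down the equivalence class of the limit --- and indeed the paper uses neither a case split nor the smoothing machinery. Its argument is a direct computation: choose representatives $(Z,\cX)$ and $(Z',\cX')$ and, after base change as in Proposition \ref{univ closed 2}, write the $i$-th irreducible component of $Z$ in local coordinates as $\{ (c_{i,1}u^{e_{i,1}}, c_{i,2}u^{e_{i,2}}, c_{i,3}u^{e_{i,3}})\}$, and similarly for $Z'$ with data $(d_{i,j},f_{i,j})$. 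Since the equivalences of the stack act trivially on $x,y,z$, the isomorphism of generic fibres forces the valuation vectors to agree, $(e_{i,1},e_{i,2},e_{i,3})=(f_{i,1},f_{i,2},f_{i,3})$; these vectors are exactly the tropicalisations of $Z$ and $Z'$ and are constant across the equivalence classes. Flatness then forces each $Z_i$ and $Z_i'$ to satisfy the same equations over the closed point, so whenever more than one exponent is nonzero the closed-fibre support must lie in the bubble blown up along $\langle x, cu^{e_{i,1}}\rangle$ and $\langle y, c'u^{e_{i,2}}\rangle$. Hence the nonempty bubbles of both limits correspond to the same configuration of vertices in $\trop(X_0)$, i.e.\ both limits are associated pairs for the same configuration, and these are identified by the very definition of $\Bar{\mathfrak{N}^m_{\SWS}}$ (resp.\ $\Bar{\mathfrak{N}^m_{\LW}}$). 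The missing idea in your sketch is that the tropical type of the limit is an invariant computable from the generic fibre, which makes the special-to-special case no harder than the generic one; the smoothing analysis of Section \ref{smoothings section} is instead the tool the paper uses for tropical compatibility of the $(\alpha,\beta)$-stable substacks, not for this theorem.
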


\begin{proof}
    They have finite automorphisms because the stacks $\mathfrak{N}^m_{\SWS}$ and $\mathfrak{N}^m_{\LW}$ have finite automorphisms by the same argument as Lemma 6.1.10 of \cite{CT}. Passing to the equivalence classes of associated pairs does not add automorphisms, since the equivalences being added are not isomorphisms. The universal closure follows directly from the universal closure of $\mathfrak{N}^m_{\SWS}$ and $\mathfrak{N}^m_{\LW}$ in Proposition \ref{univ closed 2} and Corollary \ref{univ closed 3}, and from Corollary \ref{corollary limits of special elements}. It remains to prove that the stacks are separated.
    
    \medskip
    Let $S \coloneqq \Spec R\to C$, where $R$ is a discrete valuation ring with uniformising parameter $u$. Let $\eta$ denote the generic point of $S$ and $\eta_0$ its closed point. Now, assume that there are two pairs $[(Z,\cX)], [(Z',\cX')]\in \Bar{\mathfrak{N}^m_{\SWS}}(S)$ (or $\Bar{\mathfrak{N}^m_{\LW}}(S)$) such that $[(Z_\eta,\cX_\eta)] \cong [(Z'_\eta,\cX'_\eta)]$. We will show that it must follow that $[(Z_{\eta_0},\cX_{\eta_0})] \cong [(Z'_{\eta_0},\cX'_{\eta_0})]$.
    
    \medskip
    We consider the representatives $(Z,\cX)$ and $(Z',\cX')$ of the above equivalence classes. As in the the proof of Proposition \ref{univ closed 2}, we may assume that $S$ is chosen so that the $i$-th irreducible component of $Z$ is given in terms of its local coordinates $x,y$ and $z$ by
    \begin{equation}
        \{ (c_{i,1}u^{e_{i,1}}, c_{i,2}u^{e_{i,2}}, c_{i,3}u^{e_{i,3}})\},
    \end{equation}
    and the $i$-th irreducible component of $Z'$ is given in terms of its local coordinates $x,y$ and $z$ by
    \begin{equation}
        \{ (d_{i,1}u^{f_{i,1}}, d_{i,2}u^{f_{i,2}}, d_{i,3}u^{f_{i,3}})\}.
    \end{equation}
    Since the equivalences of the stack act trivially on $x,y$ and $z$ and we know that $(Z_\eta,\cX_\eta) \cong (Z'_\eta,\cX'_\eta)$, it must therefore follow that $Z$ and $Z'$ have the same number of irreducible components. Moreover, if these components are labelled in a compatible way, then $c_{i,1} = d_{i,1}$ and $e_{i,1} = f_{i,1}$ for all $i$. This is independent of the choices of representative $Z$ and $Z'$, as the valuation vectors $(e_{i,1},e_{i,2},e_{i,3})$ and $(f_{i,1},f_{i,2},f_{i,3})$ are precisely the data of the tropicalisation of $Z$ and $Z'$ and this is constant across the equivalence classes.

    \medskip
    Now, as before, the fact that $Z$ and $Z'$ must be $S$-flat families in $\cX$ and $\cX'$ respectively means that each $Z_i$ and $Z_i'$ component must satisfy the equations
    \begin{align*}
         x &= c_{i,1}u^{e_{i,1}},\\
        y &= c_{i,2}u^{e_{i,2}},\\
        z &= c_{i,3}u^{e_{i,3}},
    \end{align*}
    also above the closed point. If more than one element of the set $\{ e_{i,1}, e_{i,2}, e_{i,3} \}$ is nonzero, then $Z_i$ and $Z_i'$ must be supported in a component blown up along $\langle x, cu^{e_{i,1}} \rangle$ and $\langle y, c'u^{e_{i,2}} \rangle$ over the closed point $\eta_0$, for some $c,c'\in R^\times$. But this tells us exactly that the nonempty bubbles of the pairs $(Z_{\eta_0},\cX_{\eta_0})$ and $(Z'_{\eta_0},\cX'_{\eta_0})$ must correspond to the same points in the tropicalisation and therefore these two pairs must belong to the same equivalence class.
\end{proof}

\begin{remark}
    As mentioned above, the stacks $\Bar{\mathfrak{N}^m_{\SWS}}$ and $\Bar{\mathfrak{N}^m_{\LW}}$ mirror Kennedy-Hunt's underlying stack construction of a logarithmic Quot scheme. We should note, however, that restricting the construction of \cite{K-H} to the case of Hilbert schemes of points, though similar, would not yield either of the stacks $\Bar{\mathfrak{N}^m_{\SWS}}$ and $\Bar{\mathfrak{N}^m_{\LW}}$. This is because some choices were made in the way we constructed $X[A,B]$, and $\mathfrak{X}'$ does not contain every possible expansion of $X$. Kennedy-Hunt's construction precludes any such choices. Let us denote for now by $\mathfrak{Y}\to C$ the underlying stack of the restriction of the logarithmic Quot scheme construction to the case of Hilbert schemes of points. For any algebraically closed field $k$, an object of $\mathfrak{Y}(k)$ is an equivalence class. While there is a bijection between the sets of equivalence classes $|\mathfrak{Y}(k)|$ and $|\Bar{\mathfrak{N}^m_{\SWS}}(k)|$ or $|\Bar{\mathfrak{N}^m_{\LW}}(k)|$ of the respective stacks, some representatives of the equivalence class defining an object of $\mathfrak{Y}(k)$ do not exist in the equivalence class defining the corresponding object in $\Bar{\mathfrak{N}^m_{\SWS}}(k)$ or $\Bar{\mathfrak{N}^m_{\LW}}(k)$.
\end{remark}

Finally, we would like to comment on the fact that we do not assert an isomorphism between the stacks $\Bar{\mathfrak{N}^m_{\SWS}}$ and $\Bar{\mathfrak{N}^m_{\LW}}$ here. This is because the results we used to establish equivalence of stacks require the stacks to be algebraic. We do, in fact, expect these stacks to be isomorphic but some further work is needed to show this.

\subsection{Choices of proper algebraic stacks.}\label{choices of stacks}

As mentioned before, the second way we have of making separated stacks is to make a systematic choice among all pairs $(Z,\cX) \in \mathfrak{N}^m_{\SWS}(k)$ (or $\mathfrak{N}^m_{\LW}(k)$) associated to the same configuration of vertices in $\trop(X_0)$. For example, in Figure \ref{assoc pairs}, we would need to add a stability condition which allows exactly one of the pairs represented by the three geometric pictures on the right to be stable. These choices effectively cut out proper substacks of $\mathfrak{N}^m_{\SWS}$ and $\mathfrak{N}^m_{\LW}$.

\subsubsection*{Additional stability condition for uniqueness of limits.}
In order to make these choices of limit representatives, we may declare that certain fibres $\cX\in \mathfrak{X}'$ contain no stable subschemes and we may also decide that certain bubbles in given fibres contain no points of the support. The Maulik-Ranganathan stability condition is close to this description. The starting point for their construction is to look at the image of a given subscheme $Z$ under the tropicalisation map, given by a collection of vertices in the tropicalisation of $X_0$, and then construct an expansion around the bubbles introduced by these new vertices, corresponding to our notion of associated pairs. Implicitly, choices are made at this stage to exclude certain superfluous fibres. We must therefore emulate this with our stability condition. Moreover, in the modified special fibres they do construct, it is necessary to allow certain configurations of points and not others, hence labelling certain bubbles as tube components and introducing the concept of Donaldson-Thomas stability with respect to these labellings. We describe suitable additions to the stability conditions previously discussed and extend Maulik-Ranganathan stability to our situation with the following definitions.

\begin{definition}
    Let $\cX\in\mathfrak{X}'(k)$ and let $Z$ be a length $m$ zero-dimensional subscheme in $\cX$. A \emph{tube labelling} of $\cX$ is a choice of a collection of bubbles in $\cX$ which we call \emph{tube components}. Given such a labelling, the pair $(Z,\cX)$ is \textit{Donaldson-Thomas stable} (abbreviated DT stable) if it satisfies the following condition: a bubble in $\cX$ contains no point of the support of $Z$ if and only if it is a tube component.
\end{definition}

\begin{definition}\label{MR stab}
    Let $\alpha$ denote a collection of equivalence classes of fibres in $\mathfrak{X}'(k)$ and let $\beta$ denote a collection of tube labellings on the equivalence classes of fibres remaining after the fibres $\alpha$ have been removed. Let $\cX\in\mathfrak{X}'(k)$. We say that the pair $(Z,\cX)$, where $Z$ is a length $m$ zero-dimensional subscheme in $\cX$, is $(\alpha,\beta)$-\textit{stable} if $\cX$ is not one of the fibres in $\alpha$ and the pair $(Z,\cX)$ is DT stable.
\end{definition}

We will only want to consider $(\alpha,\beta)$-stability as a restriction of SWS or LW stability. We denote by $\mathfrak{N}^m_{\SWS,(\alpha,\beta)}$ and $\mathfrak{N}^m_{\LW,(\alpha,\beta)}$ the stacks $\mathfrak{N}^m_{\SWS}$ and $\mathfrak{N}^m_{\LW}$ restricted to their $(\alpha,\beta)$-stable loci. As we will see with the next two results, an appropriate choice of $(\alpha,\beta)$-stability condition will give us unique limits and allow us to build proper stacks. First, let us define some useful terminology.

\begin{definition}
    We say that a pair $(\alpha,\beta)$ is an \textit{almost proper SWS stability condition} if it is chosen so that for each configuration of points in $\trop(X_0)$ there exists a unique associated pair $(Z,\cX)\in \mathfrak{N}^m_{\SWS,(\alpha,\beta)}(k)$, where $\cX$ is pulled back from a modified special fibre. The corresponding definition of \textit{almost proper LW stability condition} can be made in a similar way.
\end{definition}

\begin{proposition}\label{generic limit}
    Let $S\coloneqq \Spec R$ for some discrete valuation ring $R$ and let $\xi \colon S\to C[A,B]$ be a morphism that maps the generic point $\eta$ of $S$ to a vector in $C[A,B]$ whose entries are all invertible, for some unbroken $(A,B,n)$.
    Now, assume that $(\alpha,\beta)$ is an almost proper SWS or LW stability condition and let $\cX\coloneqq \xi^*X[A,B]$. If $(Z_\eta\cX_\eta)\in \mathfrak{N}^m_{\SWS,(\alpha,\beta)}(\eta)$ (or $\mathfrak{N}^m_{\LW,(\alpha,\beta)}(\eta)$), then $(Z_\eta,\cX_\eta)$ has a unique extension in $\mathfrak{N}^m_{\SWS,(\alpha,\beta)}$ (or $\mathfrak{N}^m_{\LW,(\alpha,\beta)}$).
\end{proposition}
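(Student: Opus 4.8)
The plan is to reduce uniqueness of the extension to the uniqueness of the associated pair built into the almost proper hypothesis, using that the tropicalisation of the generic fibre is an intrinsic invariant. First I would fix the tropical data. Following the opening steps of the proof of Proposition \ref{univ closed 2}, after a finite base change $S'\to S$ I may assume that the vertices $\trop(Z_\eta)\cap\trop(X_0)$ are integral, so that $(Z_\eta,\cX_\eta)$ determines a well-defined configuration $\Sigma_Z$ of vertices in $\trop(X_0)$. The essential point, already used in the proof that $\Bar{\mathfrak{N}^m_{\SWS}}$ is separated, is that the valuation vectors $(e_{i,1},e_{i,2},e_{i,3})$ of the components of $Z$ are precisely the tropicalisation of $Z$ and are constant across the equivalences of $\mathfrak{X}'$ and under further base change. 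Thus $\Sigma_Z$ depends only on $(Z_\eta,\cX_\eta)$ and on none of the subsequent choices.

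For existence I would invoke the almost proper hypothesis to produce the unique $(\alpha,\beta)$-stable associated pair $(Z_0,\cX_0)$ for $\Sigma_Z$ with $\cX_0$ a modified special fibre, and then realise it as an extension. Concretely, as in the construction of the base expression \eqref{base expression} in the proof of Proposition \ref{univ closed 2}, the configuration $\Sigma_Z$ together with the choice of sets $(A',B')$ producing $\cX_0$ determines a morphism $\xi'\colon S'\to C[A',B']$ sending $\eta'$ to a point with all entries invertible and $\eta_0'$ over $0\in C$; the components $Z_i'$, cut out by the equations \eqref{subscheme equations}, then form an $S'$-flat family whose closed fibre is $(Z_0,\cX_0)$ and whose generic fibre recovers $(Z_\eta,\cX_\eta)$ after base change. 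Since at $\eta'$ all basis directions are invertible, $\cX_{\eta'}$ carries no bubbles and is hence automatically DT stable and outside $\alpha$ (as $(Z_\eta,\cX_\eta)\in\mathfrak{N}^m_{\SWS,(\alpha,\beta)}(\eta)$ by assumption), so the whole family lies in $\mathfrak{N}^m_{\SWS,(\alpha,\beta)}$. This gives the required extension; the same argument with Corollary \ref{univ closed 3} in place of Proposition \ref{univ closed 2} covers the LW case.

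For uniqueness I would take two $(\alpha,\beta)$-stable extensions of $(Z_\eta,\cX_\eta)$ and compare them over a common finite base change. Each is, over its closed point $\eta_0'$, a pair whose nonempty bubbles correspond via the equations \eqref{subscheme equations} to the points of $\Sigma_Z$; that is, each closed fibre is an associated pair for the configuration $\Sigma_Z$ fixed in the first step, and its underlying $\cX_{\eta_0'}$ is a modified special fibre since $\eta_0'$ maps over $0\in C$. As both closed fibres are moreover $(\alpha,\beta)$-stable, the almost proper hypothesis forces them to coincide as equivalence classes in $\mathfrak{N}^m_{\SWS,(\alpha,\beta)}(k)$, which is exactly uniqueness of the extension in the sense of Definition \ref{extension}.

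The main obstacle I expect is bookkeeping the interface between \emph{extension} (Definition \ref{extension}, defined only up to finite base change and the stack equivalences) and \emph{associated pair} for a fixed tropical configuration: one must check that the configuration $\Sigma_Z$ read off from any $(\alpha,\beta)$-stable extension is independent of the chosen base change and representative, and conversely that realising a prescribed associated pair as an honest $S'$-flat extension respects these equivalences. Once this identification between extensions of $(Z_\eta,\cX_\eta)$ and $(\alpha,\beta)$-stable associated pairs for $\Sigma_Z$ is made precise, the proposition becomes a direct translation of the uniqueness already guaranteed by the almost proper hypothesis.
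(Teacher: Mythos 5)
Your proposal is correct and follows essentially the same route as the paper's proof: both reduce the statement to the proof of Proposition \ref{univ closed 2}, identify the closed fibre of any stable extension with an associated pair for the tropical configuration $\trop(Z_\eta)$ read off from the valuation vectors $(e_{i,1},e_{i,2},e_{i,3})$, and then invoke the uniqueness of the $(\alpha,\beta)$-stable associated pair guaranteed by the almost proper hypothesis. Your additional bookkeeping (well-definedness of $\Sigma_Z$ under base change and stack equivalences, and stability of the generic fibre) makes explicit what the paper leaves implicit, but introduces no new ideas beyond its argument.
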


\begin{proof}
    This follows directly from the proof of Proposition \ref{univ closed 2}. Indeed, given such a pair $(Z_\eta,\cX_\eta)$, we recall that, up to some finite base change, each irreducible component of $Z_\eta$ can be written locally in terms of its $x,y$ and $z$ coordinates as
    \begin{equation}
        \{ (c_{i,1}u^{e_{i,1}}, c_{i,2}u^{e_{i,2}}, c_{i,3}u^{e_{i,3}})\}, \nonumber
    \end{equation}
    for some $e_{i,j}\in \ZZ$ and $c_{i,j}\in R^\times$. Moreover, we showed that if more than one element of the set $\{ e_{i,1},e_{i,2},e_{i,3} \}$ is nonzero, the closure of $Z_\eta$ in any stable extension of $(Z_\eta,\cX_\eta)$ must be supported in a component blown up along the vanishing of both sides of the relevant equations \eqref{subscheme equations}. Now, we note that each of these triples $(e_{i,1},e_{i,2},e_{i,3})$ corresponds to a point in $\trop(X_0)$. By the proof of Proposition \ref{univ closed 2}, any associated pair for $\trop(Z_\eta)$ gives an extension of $(Z_\eta,\cX_\eta)$. Since $(\alpha,\beta)$ is an almost proper SWS stability condition, we know that there exists a unique $(\alpha,\beta)$-stable associated pair providing such an extension.
\end{proof}

We have therefore proven that, if $(\alpha,\beta)$ is an almost proper SWS (or LW) stability condition, families whose generic fibre is pulled back from a fibre in $X[A,B]$ over a point $(t_1,\dots,t_{n+1})$ whose entries are all nonzero have a unique $(\alpha,\beta)$-stable limit.

\subsubsection*{Compatibility condition for properness.}
In order to ensure that the stacks $\mathfrak{N}^m_{\SWS,(\alpha,\beta)}$ and $\mathfrak{N}^m_{\LW,(\alpha,\beta)}$ are proper, we need one more compatibility condition to hold. The following example illustrates how the stacks may fail to be proper if we do not add this extra condition.

\medskip
\emph{Example of incompatible limits.} Let $(\alpha,\beta)$ be an almost proper LW stability condition. Let the equivalence class in $\mathfrak{N}^m_{\LW,(\alpha,\beta)}$ associated to $\trop(X_0)$ with one vertex added to the $Y_1\cap Y_2$ edge be given by expanding one $\Delta_1$-component of pure type. Now, we decide that the equivalence class associated to $\trop(X_0)$ with two vertices added to the $Y_1\cap Y_2$ edge will be given by expanding two $\Delta_2$-components of pure type. These determine the unique associated pairs for these tropical configurations. See Figure \ref{incompatible limit}. Now let us take a length 2 zero-dimensional subscheme in the first fibre whose support consists of a point in $\Delta_1^\circ \cap \pi^*(Y_1\cap Y_2)^\circ$ and a point in $Y_2^\circ$. This is LW and $(\alpha,\beta)$-stable for our choice of $(\alpha,\beta)$. We may now study the limit of this subscheme as the point in $Y_2^\circ$ tends towards $\pi^*(Y_1\cap Y_2)$, i.e.\ as its $x$ coordinate tends to zero. If the stack $\mathfrak{N}^m_{\LW,(\alpha,\beta)}$ were proper it would contain this limit, but it does not. Indeed, our second fibre has the same associated tropical configuration as this limit and by the almost proper condition it must be the unique fibre in $\mathfrak{N}^m_{\LW,(\alpha,\beta)}$ up to equivalence which does, but it is clearly not a limit for this pair. This breaks universal closure.

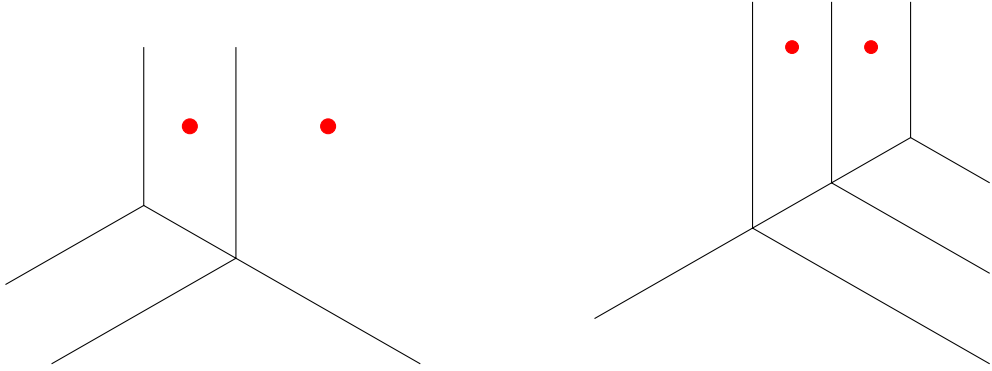
\begin{figure} 
    \begin{center}   
    \begin{tikzpicture}[scale=1.4]
        \draw   (-0.866,0.5) -- (0,0)
        (0,0) -- (0,2)       
        (-0.866,0.5) -- (-0.866,2)
        (-1.732, -1) -- (0,0)
        (-2.165,-0.249) -- (-0.866,0.5)
        
        (1.732, -1) -- (0,0);
        \filldraw[red] (-0.433,1.25) circle (2pt) ;
        \filldraw[red] (0.866,1.25) circle (2pt) ;
    \end{tikzpicture}
    \hspace{2cm}
    \begin{tikzpicture}[scale=1.2]
\draw   

        (0,-1) -- (0.866,-0.5)
        (0,-1) -- (0,1.5)
        
        (0.866,-0.5) -- (1.732,0)
        
        (1.732,0) -- (1.732,1.5)
        
        (2.599, -1.5) -- (0.866,-0.5)
        (0.866,-0.5) -- (0.866, 1.5)
        (2.599, -2.5) -- (0,-1)
        (-1.732,-2) -- (0,-1)
        (2.599, -0.5) -- (1.732,0);
        \filldraw[red] (0.433,1) circle (2pt) ;
        \filldraw[red] (1.299,1) circle (2pt) ;
\end{tikzpicture}
    
    \end{center}
    \caption{The picture on the right is not a limit of the one on the left.}
    \label{incompatible limit}
\end{figure}

\medskip
We recall the following definitions from \cite{CT}.

\begin{definition}\label{tropical equivalence}
    Let S be a $C$-scheme and $\cX\in \mathfrak{X}(S)$. The equivalence class $[\trop(\cX)]$ is defined as follows. For any $C$-scheme $S'$ and any $\cX'\in \mathfrak{X}(S')$, we say $\trop(\cX')$ belongs to the equivalence class $[\trop(\cX)]$ if and only if $\cX$ and $\cX'$ are equivalent in $\mathfrak{X}$.
\end{definition}

By convention, if $\trop(\cX)$ is not well-defined (for example, if we have only specified a family in $X[A,B]$ from which it is pulled back but not a morphism $S\to C[A,B]$), we may take $\trop(\cX)$ to be any element of the equivalence class $[\trop(\cX)]$.

\medskip
For an unbroken pair $(A,B,n)$ and a set $I\subseteq [n+1]$, let $X[A,B]_I$ denote the restriction of $X[A,B]$ to the locus where $t_i=0$ for all $i\in I$. 

\begin{definition}
    Let $(Z_\eta,\cX_\eta)\in \mathfrak{N}^m_{\SWS,(\alpha,\beta)}(\eta)$ (or $\mathfrak{N}^m_{\LW,(\alpha,\beta)}(\eta)$) be any pair over the generic point of some $S\coloneqq \Spec R$, for some discrete valuation ring $R$ as before. Moreover, let $\cX_\eta$ be a restriction of $\cX\coloneqq \xi^* X[A,B]_I$ for some nonempty set $I$, i.e.\ $\cX_\eta$ is pulled back from some modified special fibre.  If, for any associated pair $(Z'_{\eta'_0}, \cX'_{\eta'_0})$ of $\trop(Z_\eta)$ in $\mathfrak{N}^m_{\SWS}$ (or $\mathfrak{N}^m_{\LW}$), the tropicalisation $\trop(\cX'_{\eta'_0})$ is a subdivision of a representative of the equivalence class $[\trop(\cX_\eta)]$ (as in Definition \ref{tropical equivalence}), then we say that $\mathfrak{N}^m_{\SWS,(\alpha,\beta)}$ (or $\mathfrak{N}^m_{\LW,(\alpha,\beta)}$) is \emph{tropically compatible}.
\end{definition}

Now, we may define the appropriate compatibility condition for $(\alpha,\beta)$-stability.

\begin{definition}\label{proper SWS stab}
    Let $(\alpha,\beta)$ define an almost proper SWS stability condition. If the corresponding stack $\mathfrak{N}^m_{\SWS,(\alpha,\beta)}$ is tropically compatible, we say that $(\alpha,\beta)$ defines a \textit{proper SWS stability condition} (abbreviated PSWS stability). A similar definition can be made for LW stability.
    If $(\alpha,\beta)$ is a proper LW stability condition and the pair $(Z,\cX)$ is LW and $(\alpha,\beta)$-stable, then we say it is \textit{Maulik-Ranganathan stable} (abbreviated MR stable) for the given choice $(\alpha,\beta)$.
\end{definition}

\begin{proposition}\label{proper stab proposition}
    Let $(\alpha,\beta)$ be some choice of stability condition. The corresponding stack $\mathfrak{N}^m_{\SWS,(\alpha,\beta)}$ is proper if and only if $(\alpha,\beta)$ is a PSWS stability condition.
\end{proposition}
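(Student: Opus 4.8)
The plan is to establish properness through the valuative criteria for separatedness and universal closure, finite type being inherited from $\mathfrak{N}^m_{\SWS}$. The bridge between the two sides of the equivalence is the principle, extracted from the proof of Proposition~\ref{univ closed 2}, that the closed-point limit of any family $(Z_\eta,\cX_\eta)$ is forced to be an \emph{associated pair} for the configuration $\trop(Z_\eta)$, and conversely that each associated pair realizing a tropicalisation which subdivides $[\trop(\cX_\eta)]$ supplies a genuine extension. Both directions then reduce to matching the associated pairs selected by $(\alpha,\beta)$-stability against the actual tropical limits.

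To show that a PSWS condition gives a proper stack, I would treat separatedness first, using only the almost proper hypothesis: two families agreeing over $\eta$ share the same valuation data $(e_{i,1},e_{i,2},e_{i,3})$, so their closed-point limits are both $(\alpha,\beta)$-stable associated pairs for the same configuration in $\trop(X_0)$, and almost properness forces these to coincide. For universal closure I would split into cases. If every basis direction is invertible at $\eta$, Proposition~\ref{generic limit} already yields the unique extension. If instead $\cX_\eta$ is pulled back from a modified special fibre, almost properness again selects the unique $(\alpha,\beta)$-stable associated pair for $\trop(Z_\eta)$, and tropical compatibility guarantees that its tropicalisation subdivides a representative of $[\trop(\cX_\eta)]$; this subdivision is exactly what is needed to exhibit a degeneration of $\cX_\eta$ onto that fibre, so the associated pair is a bona fide stable limit.

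For the converse, assuming $\mathfrak{N}^m_{\SWS,(\alpha,\beta)}$ is proper, I would recover the two clauses of Definition~\ref{proper SWS stab} separately. Almost properness: given a configuration of points in $\trop(X_0)$, build a generic family realizing it; universal closure produces a stable limit, an associated pair over a modified special fibre, giving existence, while separatedness, combined with the fact that every associated pair for this configuration is an extension of the same generic family (Proposition~\ref{univ closed 2}), forces uniqueness. Tropical compatibility: here I would argue contrapositively, mirroring the Example of incompatible limits above. If compatibility failed, some special family would have its unique $(\alpha,\beta)$-stable associated pair fail to subdivide $[\trop(\cX_\eta)]$; geometrically there is then no smoothing from $\cX_\eta$ onto that fibre, so the family admits no $(\alpha,\beta)$-stable extension, contradicting universal closure.

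The step I expect to be the main obstacle is the treatment of special families, where I must convert the tropical subdivision condition into the geometric statement that a smoothing from $\cX_\eta$ to the candidate limit exists, and confirm that it is precisely the unique $(\alpha,\beta)$-stable associated pair, rather than some competing realisation of the same vertices as bubbles of the wrong type, that governs the limit. Making this rigorous relies on the dictionary of Section~\ref{2nd BUs} between subdivisions of $\trop(X_0)$ and admissible blow-ups, together with the tropical smoothing criterion of Section~\ref{smoothings section}; the example already isolates the obstruction, namely that incompatible choices of expanded fibre for nested tropical configurations destroy the existence of limits.
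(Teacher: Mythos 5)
Your proof is correct and is essentially the paper's argument made explicit: the paper disposes of this proposition in two lines by citing Proposition 6.1.8 of \cite{CT} and observing that the two conditions needed in that proof are precisely your two ingredients, namely existence and uniqueness of the associated pair (supplied by almost properness, which you deploy through Proposition \ref{generic limit} and the valuation data of Proposition \ref{univ closed 2}) and the tropically compatible property. Your valuative-criterion reconstruction usefully fills in what the citation leaves implicit --- in particular the necessity direction, which you recover from universal closure and separatedness exactly along the lines of the paper's Example of incompatible limits, using that every associated pair for a given configuration extends the same generic family --- and the obstacle you flag, converting the subdivision condition on $\trop(\cX_\eta)$ into an actual smoothing onto the candidate limit fibre, is precisely the content the paper delegates to \cite{CT} and to the smoothing criterion of Proposition \ref{smoothings proposition}, so your reliance on Section \ref{smoothings section} there is the right move rather than a gap.
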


\begin{proof}
This follows from Proposition 6.1.8 of \cite{CT}, since we saw in the proof of that proposition that there are two conditions needed to show the moduli stack is proper:
\begin{itemize}
    \item existence and uniqueness of the associated pair, which is given two us by the almost proper SWS stability condition $(\alpha,\beta)$, and
    \item the tropically compatible property, which is given to us by the fact that $(\alpha,\beta)$ is moreover a proper SWS stability condition.
\end{itemize}
\end{proof}

\begin{corollary}\label{proper stab corollary}
The stack $\mathfrak{N}^m_{\LW,(\alpha,\beta)}$ is proper if and only if $(\alpha,\beta)$ is a proper LW stability condition.    
\end{corollary}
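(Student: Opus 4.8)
The plan is to reprise the proof of Proposition \ref{proper stab proposition} almost verbatim, substituting LW stability for SWS stability throughout. The essential point is that every ingredient used in the SWS argument has an LW counterpart already available in the preceding material: Proposition \ref{generic limit} is stated simultaneously for SWS and LW stability conditions, so families whose generic fibre is pulled back from a point with all entries invertible admit a unique $(\alpha,\beta)$-stable extension in $\mathfrak{N}^m_{\LW,(\alpha,\beta)}$ precisely when $(\alpha,\beta)$ is an almost proper LW stability condition; and Corollary \ref{univ closed 3} supplies universal closure of the ambient stack $\mathfrak{N}^m_{\LW}$.

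For the implication that properness forces $(\alpha,\beta)$ to be a proper LW stability condition, I would argue by contrapositive and show that the failure of either defining property obstructs properness. If $(\alpha,\beta)$ is not almost proper, then for some configuration of vertices in $\trop(X_0)$ either no $(\alpha,\beta)$-stable associated pair exists, breaking universal closure, or several do, breaking separatedness; this is exactly the non-uniqueness of associated pairs depicted in Figure \ref{assoc pairs}. If $(\alpha,\beta)$ is almost proper but not tropically compatible, then the incompatible-limits phenomenon described in the example preceding Definition \ref{tropical equivalence} (see Figure \ref{incompatible limit}) exhibits a one-parameter family whose flat limit is forced to lie outside the stack, again violating universal closure.

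For the converse, assuming $(\alpha,\beta)$ is a proper LW stability condition, I would establish properness through the valuative criterion by invoking Proposition 6.1.8 of \cite{CT}, which isolates the two conditions needed for the conclusion: existence and uniqueness of the associated pair, guaranteed here by the almost proper LW hypothesis, and tropical compatibility, guaranteed by the proper LW hypothesis. Universal closure for generic fibres is Proposition \ref{generic limit}, while the remaining case, where $\cX_\eta$ is itself a modified special fibre, is handled by Corollary \ref{corollary limits of special elements} exactly as in the SWS setting; separatedness then follows since uniqueness of the associated pair forces any two stable limits of a common generic family to coincide.

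The main obstacle I anticipate is not conceptual but one of bookkeeping: one must confirm that the SWS-to-LW substitution is harmless at each step, and in particular that the auxiliary results invoked, namely Corollary \ref{corollary limits of special elements} and the separatedness argument for modified special fibres, were established without recourse to any feature special to SWS stability, such as the GIT-theoretic characterisation of Theorem \ref{stability theorem}. Since the additional $(\alpha,\beta)$-stability is imposed in the same way on top of either base condition, and the tropical data controlling the limits is insensitive to the choice between LW and SWS, this verification should present no genuine difficulty.
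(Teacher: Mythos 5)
Your proposal is correct and matches the paper's approach: the paper proves this corollary simply by observing that the proof of Proposition \ref{proper stab proposition} (which reduces to the two conditions isolated in Proposition 6.1.8 of \cite{CT}, namely existence and uniqueness of the associated pair and tropical compatibility) carries over verbatim with LW stability in place of SWS stability. Your additional bookkeeping — checking that Proposition \ref{generic limit} and Corollary \ref{corollary limits of special elements} are already stated for LW stability and make no SWS-specific use of Theorem \ref{stability theorem} — is exactly the verification the paper leaves implicit, and your contrapositive treatment of the ``only if'' direction mirrors the paper's Figure \ref{assoc pairs} and Figure \ref{incompatible limit} discussions.
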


\begin{proof}
    This follows immediately from the proof of Proposition \ref{proper stab proposition}.
\end{proof}

\emph{Notation.} In the following we shall denote by $\mathfrak{N}^m_{\PSWS,(\alpha,\beta)}$ a stack $\mathfrak{N}^m_{\SWS,(\alpha,\beta)}$ where $(\alpha,\beta)$ is a proper SWS stability condition, and by $\mathfrak{N}^m_{\MR,(\alpha,\beta)}$ a stack $\mathfrak{N}^m_{\LW,(\alpha,\beta)}$ where $(\alpha,\beta)$ is a proper LW stability condition.

\medskip
The stacks $\mathfrak{M}^m_{\SWS}$ and $\mathfrak{M}^m_{\LW}$ from \cite{CT} are examples of stacks $\mathfrak{N}^m_{\PSWS,(\alpha,\beta)}$ and $\mathfrak{N}^m_{\MR,(\alpha,\beta)}$. Indeed, the choice $(\alpha,\beta)$ which defines these stacks clearly satisfies the conditions of a proper SWS (or LW) stability condition.

\begin{corollary}\label{corollary limits of special elements}
    Let $S$ and $\eta$ as before and let $(Z_\eta,\cX_\eta)$ be an element of $\mathfrak{N}^m_{\SWS}(\eta)$, $\mathfrak{N}^m_{\LW}(\eta)$, $\Bar{\mathfrak{N}^m_{\SWS}}(\eta)$ or $\Bar{\mathfrak{N}^m_{\LW}}(\eta)$ such that $\cX_\eta$ is a modified special fibre. Then $(Z_\eta,\cX_\eta)$ has a stable extension with respect to the relevant stability condition in the relevant stack.
\end{corollary}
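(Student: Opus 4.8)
The plan is to reduce the statement to the situation already treated in the proof of Proposition \ref{univ closed 2}, the only genuinely new feature being that the generic fibre $\cX_\eta$ is itself a modified special fibre rather than a fibre on which all basis directions are invertible. The geometric point is that, étale locally, the double curves and triple points of $\cX_\eta$ look exactly like the loci $xy=0$ and $xyz=0$ of the original special fibre $X_0$, so that the problem of pushing the support of a degenerating family off these strata is formally identical to the one solved in Proposition \ref{univ closed 2}.

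First I would realise $(Z_\eta,\cX_\eta)$ as the generic fibre of a family over a suitably enlarged base. Since $\cX_\eta$ is pulled back from some $X[A,B]_I$ via a morphism $\xi\colon \eta\to C[A,B]$ sending $\eta$ to a point with $t_i=0$ for $i\in I$ and $t_i$ a unit otherwise, I would use the inclusion \eqref{AB embed} to embed $C[A,B]$ into a larger base $C[A'',B'']$ for an unbroken pair $(A'',B'',n'')$. After a finite base change $S'=\Spec R'\to S$ chosen, as in Proposition \ref{univ closed 2}, so that the intersection vertices $\trop(Z)\cap\trop(X_0)$ become integral, I would define $\xi'\colon S'\to C[A'',B'']$ by a base expression of the form \eqref{base expression}: the directions in $I$ are kept identically zero, the original unit directions remain units, and each inserted direction is set to $f_k u^{g_k}$ with $g_k>0$. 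Over $\eta'$ all inserted directions are nonzero, so after rescaling by the torus the point lies in the image of \eqref{AB embed} and $\iota^* X[A'',B'']\cong X[A,B]$ recovers $\cX_\eta$, as required by Definition \ref{extension}, whereas over $\eta_0'$ the inserted directions vanish and produce a further modification.

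Next I would identify the closed fibre and arrange smooth support. Writing each irreducible component of the pulled-back family $Z'$ in the local form \eqref{xyz expression}, the flatness equations \eqref{subscheme equations} must hold over $\eta_0'$ as well; exactly as in Proposition \ref{univ closed 2}, this forces the limit of each component into a bubble blown up along both sides of the relevant equations whenever more than one of the exponents $e_{i,1},e_{i,2},e_{i,3}$ is nonzero. The casework of Proposition \ref{univ closed 2} then applies verbatim, now at the singular strata of $\cX_\eta$: I would choose the exponents $g_k$ together with the sets $A'',B''$ so that every component of $Z'_{\eta_0'}$ whose support meets the singular locus of $\cX_\eta$ is absorbed into the interior of a newly expanded bubble. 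Taking, for instance, the maximal admissible refinement guarantees that at least one point of the support lies in $(\Delta_1^{(i)})^\circ\cup(\Delta_2^{(n''+1-i)})^\circ$ for every $i$ for which these components are expanded, whence $(Z'_{\eta_0'},\cX'_{\eta_0'})$ is SWS stable by Theorem \ref{stability theorem}, and a fortiori LW stable.

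The main obstacle I anticipate is combinatorial rather than geometric: I must verify that the refinement needed for smooth support can be realised within a \emph{single} unbroken pair $(A'',B'')$ that also restricts to the expansion already present in $\cX_\eta$. By Remark \ref{remark unbroken trop} the unbroken condition constrains the admissible order of type-$1$ and type-$2$ edges along the $Y_1\cap Y_2$ edge of $\trop(X_0)$, and the new bubbles are inserted at the positions dictated by $\trop(Z)$. I would resolve this by exploiting the freedom already highlighted in Proposition \ref{univ closed 2} — that a component with $e_{i,1},e_{i,2}$ nonzero and $e_{i,3}=0$ may be placed in a pure $\Delta_1$-, a pure $\Delta_2$-, or a mixed bubble — to assign types to the new vertices so that the configuration stays unbroken while refining the subdivision present in $\cX_\eta$. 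Once such a compatible choice is fixed, $(Z',\cX')$ is an extension in the sense of Definition \ref{extension}, which proves the claim for $\mathfrak{N}^m_{\SWS}$ and $\mathfrak{N}^m_{\LW}$. Finally, for $\Bar{\mathfrak{N}^m_{\SWS}}$ and $\Bar{\mathfrak{N}^m_{\LW}}$ the conclusion is immediate: choosing any representative of the given equivalence class over $\eta$, the extension just constructed represents an equivalence class in the quotient stack, so its existence transfers directly.
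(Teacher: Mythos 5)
Your strategy is a constructive one in the spirit of Proposition \ref{univ closed 2}: embed into a larger unbroken pair via \eqref{AB embed}, insert new basis directions of positive valuation that are units over $\eta'$, and check the unbroken combinatorics of the refinement. The paper's own proof of Corollary \ref{corollary limits of special elements} is instead short and abstract: it observes that the non-separated stacks $\mathfrak{N}^m_{\SWS}$ and $\mathfrak{N}^m_{\LW}$ by construction contain \emph{all} limits, in particular every associated pair for $\trop(Z_\eta)$ that is compatible with $\cX_\eta$ and stable, and that in $\Bar{\mathfrak{N}^m_{\SWS}}$ and $\Bar{\mathfrak{N}^m_{\LW}}$ all associated pairs form one equivalence class, which therefore contains a representative compatible with $\cX_\eta$. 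Your final paragraph for the bar stacks agrees with the paper's second paragraph, and your base-change and insertion steps are a legitimate unwinding of what the paper leaves implicit.

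There is, however, a genuine gap at the step where you assert that the casework of Proposition \ref{univ closed 2} ``applies verbatim''. That casework rests on the expression \eqref{xyz expression}, in which every coordinate of every component of $Z'$ is a \emph{unit} times a power of $u$; this is automatic when the generic fibre lies in $X^\circ$, but fails in exactly the situation of the corollary. A component of $Z_\eta$ smoothly supported in a modified special fibre has some étale-local coordinates identically zero (e.g.\ $y\equiv 0$ for a point of $Y_2^\circ$, and two ambient coordinates vanish identically for a point in the interior of a bubble over $\pi^*(Y_1\cap Y_2)$), and its degeneration data is carried by the bubble coordinates $x_i^{(k)},y_i^{(k)}$ rather than by valuations of $x,y,z$ with unit coefficients. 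For such components the available degenerations are constrained by the equations of $\cX_\eta$: this is precisely the content of Remark \ref{don't need strong compatibility} and Proposition \ref{smoothings proposition}, which show that a subscheme supported in a bubble $B_0$ is either fixed or tends to the interior of one of the specific bubbles $B_i$ produced by the edge-sliding construction, and that certain naive limits (a point of $Y_2^\circ$ tending to the $Y_2\cap Y_3$ locus, as in Figure \ref{incompatible limit}) are not realised by any flat family. Concretely, if you run your recipe on a component with $y\equiv 0$, no choice of inserted $\Delta_2$-directions can place its limit in a bubble interior, since that limit is pinned to the boundary value $(y_0\!:\!y_1)=(0\!:\!1)$ of every $\Delta_2$-bubble; so the blanket claim that every degenerating component ``is absorbed into the interior of a newly expanded bubble'' is unjustified as stated. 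Your unbroken-condition paragraph handles the combinatorics of the refinement, but not the prior question of \emph{which} degenerations actually occur from $(Z_\eta,\cX_\eta)$; the paper's clause ``associated pairs compatible with $\cX_\eta$'', backed by the smoothing analysis of Section \ref{smoothings section}, is what fills exactly this hole, and your proof needs that analysis in place of the verbatim transfer.
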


\begin{proof}

    In the non-separated stacks $\mathfrak{N}^m_{\SWS}$ and $\mathfrak{N}^m_{\LW}$, we allow for all limits. In particular, each stack will contain all the associated pairs for $\trop(Z_\eta)$ which are compatible with $\cX_\eta$ and stable for their respective stability conditions.

    \medskip In the case of $\Bar{\mathfrak{N}^m_{\SWS}}(\eta)$ and $\Bar{\mathfrak{N}^m_{\LW}}(\eta)$, we do not make any choices, so for any such $(Z_\eta,\cX_\eta)$ all possible associated pairs of $\trop(Z_\eta)$ form an equivalence class in $\Bar{\mathfrak{N}^m_{\SWS}}(\eta)$ or $\Bar{\mathfrak{N}^m_{\LW}}(\eta)$. In particular, there will be a representative of this equivalence class which is compatible with $\cX_\eta$ in the sense of Definition \ref{proper SWS stab}.
\end{proof}

\begin{theorem}\label{theorem choice proper DM}
    The stacks $\mathfrak{N}^m_{\PSWS,(\alpha,\beta)}$ and $\mathfrak{N}^m_{\MR,(\alpha,\beta)}$ are Deligne-Mumford and proper over $C$.
\end{theorem}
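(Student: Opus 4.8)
The plan is to treat properness and the Deligne--Mumford property separately, since the bulk of the properness statement has already been assembled. By the notational convention fixed just before the theorem, $\mathfrak{N}^m_{\PSWS,(\alpha,\beta)}$ is exactly $\mathfrak{N}^m_{\SWS,(\alpha,\beta)}$ for a choice of $(\alpha,\beta)$ that is a proper SWS stability condition, and $\mathfrak{N}^m_{\MR,(\alpha,\beta)}$ is $\mathfrak{N}^m_{\LW,(\alpha,\beta)}$ for a proper LW stability condition. Proposition \ref{proper stab proposition} and Corollary \ref{proper stab corollary} then yield properness of both stacks immediately. The fact that these are stacks over $C$ is built into the construction, since $\mathfrak{X}'$ lives over $\mathfrak{C}' = C\times_{\AA^1}\mathfrak{U}'$ and hence over $C$, and restricting to a stable locus does not alter the base.

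First I would record algebraicity. The stacks $\mathfrak{N}^m_{\SWS}$ and $\mathfrak{N}^m_{\LW}$ are algebraic, being built from the relative Hilbert schemes $H^m_{[A,B]}=\Hilb^m(X[A,B]/C[A,B])$ over the Artin stack $\mathfrak{X}'$, with the quotient by $G$ and the gluing isomorphisms of Section \ref{2nd stack} accounted for. Passing to the $(\alpha,\beta)$-stable locus only restricts to a substack: removing the fibres in $\alpha$ is an open condition on $\mathfrak{X}'$, while imposing DT stability for the tube labelling $\beta$ is a locally closed condition, since a bubble contains no point of the support of $Z$ exactly when it is a tube component. Locally closed substacks of algebraic stacks are algebraic, and since $m$ is fixed and the $H^m_{[A,B]}$ are of finite type, the resulting stacks are of finite type over $C$.

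Then I would upgrade \emph{algebraic and proper} to \emph{Deligne--Mumford}. For this it suffices to show that the automorphism groups of geometric points are finite and reduced. Finiteness is immediate: every object of $\mathfrak{N}^m_{\MR,(\alpha,\beta)}$ is LW stable, hence has finite automorphisms by the very definition of LW stability, and every object of $\mathfrak{N}^m_{\PSWS,(\alpha,\beta)}$ is SWS stable, hence LW stable, so the same applies. Because $k$ has characteristic zero, a finite group scheme is étale, so these automorphism group schemes are unramified; equivalently the diagonal of the stack is unramified, which together with algebraicity is precisely the Deligne--Mumford condition. This mirrors the argument used for the stacks $\mathfrak{M}^m_{\LW}$ and $\mathfrak{M}^m_{\SWS}$ in Lemma 6.1.10 of \cite{CT}.

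The main obstacle, and the place where the genuine content sits, is properness rather than the Deligne--Mumford property: it rests on Proposition \ref{proper stab proposition}, whose proof in turn depends on the existence and uniqueness of $(\alpha,\beta)$-stable limits. Existence combines the universal closure of $\mathfrak{N}^m_{\SWS}$ and $\mathfrak{N}^m_{\LW}$ (Proposition \ref{univ closed 2} and Corollary \ref{univ closed 3}) with Corollary \ref{corollary limits of special elements} for modified special fibres, while uniqueness is controlled by the almost proper condition at generic points (Proposition \ref{generic limit}) together with tropical compatibility at special points. The one point demanding real care is that separatedness must be checked not only for families whose generic fibre has all basis directions invertible, but also for families whose generic fibre is itself a modified special fibre; this is exactly where the tropical compatibility built into the definition of a proper SWS or LW stability condition is needed, in order to rule out the incompatible-limit phenomenon illustrated in Figure \ref{incompatible limit}.
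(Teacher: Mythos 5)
Your proposal is correct and follows essentially the same route as the paper's own (two-line) proof: properness is quoted from Proposition \ref{proper stab proposition} and Corollary \ref{proper stab corollary}, and the Deligne--Mumford property comes from the finiteness of automorphisms guaranteed by SWS and LW stability. The extra detail you supply --- algebraicity of the stable loci, and the characteristic-zero observation that finite automorphism group schemes are étale, so finite automorphisms genuinely upgrade to Deligne--Mumford --- is sound and in fact makes explicit steps the paper leaves implicit.
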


\begin{proof}
    Properness follows from Proposition \ref{proper stab proposition} and Corollary \ref{proper stab corollary}. They are Deligne-Mumford because SWS and LW stability ensure finite automorphisms.
\end{proof}

\subsubsection*{Remark
about compatibility.} 
Here, we point out a subtlety about the compatibility conditions discussed above. Let $\cX_1$ and $\cX_2$ be two fibres of $\mathfrak{X}'$ over a closed point. Assume that $\cX_1$ is pulled back from some $X[A,B]_{J_1}$ and $\cX_2$ is pulled back from some $X[A',B']_{J_2}$, where $|J_1|\leq |J_2|$. We denote by $W_1\subseteq \cX_1$ and $W_2\subseteq \cX_2$ the union of all irreducible components in each fibre which are not tubes (this includes the $Y_i$ components). We say that $(\alpha,\beta)$ defines a \textit{strong compatibility condition} if the following holds. If $\pi_*(W_2)\subset\Bar{\pi_*(W_1)}\subset X_0$, then there exists a fibre $\cX_3$ equivalent to $\cX_2$ in $\mathfrak{X}'$ such that $\cX_3$ is pulled back from $X[A,B]_{J_3}$ for some $J_3$ with $J_1\subseteq J_3$. This condition certainly implies tropical compatibility, and naively may seem like the right restriction to impose, but it is actually stronger than necessary. Indeed, the tropically compatible condition does not necessarily imply strong compatibility. As shown in Remark \ref{don't need strong compatibility}, strong compatibility is not necessary to construct a proper stack, as in certain cases, subschemes cannot tend towards a certain limit.

\begin{remark}\label{don't need strong compatibility}
    Let $(Z_\eta,\cX_\eta)$ be a pair consisting of a length $m$ zero-dimensional subscheme $Z_\eta$ in a modified special fibre $\cX_\eta$. Assume that the pair is either SWS or LW stable. Let $P$ be a point of the support which lies in some irreducible component $W\subset \cX_\eta$ and let $V$ be some other irreducible component in $\cX_\eta$ which intersects $W$ nontrivially. If there exists no representative in the equivalence class of $(Z_\eta,\cX_\eta)$ such that there is a smoothing from the interior of $W$ in $\cX_\eta$ to the interior of some $\Delta$-components expanded out in the $W\cap V$ locus, i.e.\ such that these $\Delta$-components are equal to $W$ in $\cX_\eta$, then there exists no flat family of length $m$ zero-dimensional subschemes such that $(Z_\eta,\cX_\eta)$ is the generic fibre and $P$ tends towards $W\cap V$ over the closed point. This can be  seen by studying the equations of the blow-ups.
    
    For example, let $\cX_\eta$ be a fibre with one expanded $\Delta_1$-component of pure type and no other expanded components. Let $Z_\eta$ be a length 2 zero-dimensional subscheme with one point of its support, $P_0$, lying in the interior of this $\Delta_1$-component in the $\pi^*(Y_1\cap Y_2)^\circ$ locus and the other point of the support, $P_1$, lying in $Y_2^\circ$, as in the picture on the left of Figure \ref{incompatible limit}. In this fibre, there cannot be a $\Delta_2$-component which is equal to $Y_2$ and thus there can be no smoothing from the interior of $Y_2$ to an expanded component in the $Y_2\cap Y_3$ locus. But there also cannot be any flat family such that $P_1$ tends towards $z=0$, as any equation for $P_1$ fixes $z$ and does not set it to be proportional to any variable which may tend towards zero (like a $y_1^{(k)}/y_0^{(k)}$ or $t_i$ variable).
\end{remark}

In Section \ref{smoothings section}, we give a tropical criterion to understand when such smoothings from one component to another exist.

\subsection{Relating the proper stacks}\label{2nd isomorphism of stacks}

We may now show how the stacks $\mathfrak{N}^m_{\PSWS,(\alpha,\beta)}$ and $\mathfrak{N}^m_{\MR,(\alpha,\beta)}$ relate to each other.

\begin{theorem}
    Let $\mathfrak{N}^m_{\PSWS,(\alpha,\beta)}$ and $\mathfrak{N}^m_{\MR,(\alpha,\beta)}$ be two choices given by the same $(\alpha,\beta)$. This means that we have chosen an MR stability condition $(\alpha,\beta)$ which also defines a PSWS stability condition (by Proposition \ref{SWS stab 2} this is not the case in general). There then exists an isomorphism of stacks
    \[
    \mathfrak{N}^m_{\PSWS,(\alpha,\beta)}\cong \mathfrak{N}^m_{\MR,(\alpha,\beta)}.
    \]
\end{theorem}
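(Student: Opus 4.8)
The plan is to exhibit the isomorphism as the tautological comparison functor and then check it is an equivalence. Since SWS stability implies LW stability (recall the inclusion $H^m_{[A,B],\SWS}\subset H^m_{[A,B],\LW}$), sending a pair $(Z,\cX)$ to itself defines a morphism
\[
\Phi\colon \mathfrak{N}^m_{\PSWS,(\alpha,\beta)} \longrightarrow \mathfrak{N}^m_{\MR,(\alpha,\beta)}.
\]
Both sides are the restriction of the \emph{same} stack of $(\alpha,\beta)$-stable pairs on $\mathfrak{X}'$ to, respectively, the SWS- and the LW-stable locus, and SWS/LW are properties of a pair rather than extra structure; hence $\Phi$ is fully faithful, and over any base the isomorphisms in each groupoid are exactly those of the data $(Z,\cX)$. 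Moreover GIT stability is an open condition, so by Theorem \ref{stability theorem} the SWS locus is open inside the LW locus and $\Phi$ realises $\mathfrak{N}^m_{\PSWS,(\alpha,\beta)}$ as an \emph{open} substack of $\mathfrak{N}^m_{\MR,(\alpha,\beta)}$. An open substack containing every geometric point of the ambient stack is the whole stack, so it suffices to prove that $\Phi$ is surjective on geometric points; equivalently, that every LW- and $(\alpha,\beta)$-stable pair is already SWS stable.

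To prove this surjectivity I would argue as follows. If $\cX$ has all basis directions invertible it carries no $\Delta$-components, the support condition of Theorem \ref{stability theorem} is vacuous, and LW and SWS stability coincide; so assume $\cX$ is a modified special fibre, pulled back from some $X[A,B]_I$. Let $(Z,\cX)\in\mathfrak{N}^m_{\MR,(\alpha,\beta)}(k)$ and set $\sigma\coloneqq\trop(Z)$. Because the pair is $(\alpha,\beta)$-stable it is DT stable, so its non-empty bubbles are precisely its non-tube bubbles, and these are in bijection with the vertices of $\sigma$ lying off the original strata; thus $(Z,\cX)$ is an associated pair for the configuration $\sigma$. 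Since $(\alpha,\beta)$ is a proper (in particular almost proper) LW stability condition, $(Z,\cX)$ is the \emph{unique} LW- and $(\alpha,\beta)$-stable associated pair for $\sigma$. Since $(\alpha,\beta)$ is \emph{also} a proper SWS stability condition (Definition \ref{proper SWS stab}), there is a unique SWS- and $(\alpha,\beta)$-stable associated pair $(Z_\sigma,\cX_\sigma)$ for the same $\sigma$. But $(Z_\sigma,\cX_\sigma)$ is SWS stable, hence LW stable, and it is $(\alpha,\beta)$-stable, so it is an LW- and $(\alpha,\beta)$-stable associated pair for $\sigma$; the uniqueness just quoted forces $(Z_\sigma,\cX_\sigma)\cong(Z,\cX)$, and therefore $(Z,\cX)$ is SWS stable. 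This establishes surjectivity on geometric points and, with the open-immersion observation above, that $\Phi$ is an isomorphism. Alternatively one may note that both stacks are proper over $C$ by Theorem \ref{theorem choice proper DM}, so $\Phi$ is a proper monomorphism, hence a closed immersion, which together with the bijectivity on points again yields the isomorphism.

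I expect the main obstacle to be the bookkeeping in the middle step rather than any single hard estimate. One must verify that a DT-stable pair is genuinely an \emph{associated} pair for $\trop(Z)$ — that the correspondence between non-tube bubbles and tropical vertices is a bijection even in the presence of higher $\Delta$-multiplicities and of support contained in the $Y_i$ — and that the word ``configuration'' in the two almost-properness hypotheses refers to the same datum $\sigma$, so that the SWS- and the LW-uniqueness statements can be compared directly. The reason the argument needs the full PSWS hypothesis, and not merely that $(\alpha,\beta)$ is MR, is exactly the phenomenon of Proposition \ref{SWS stab 2}: without an SWS-stable representative for every $\sigma$, the unique LW-stable associated pair could be of pure type and violate the support condition of Theorem \ref{stability theorem}. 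It is the simultaneous \emph{existence} of an SWS-stable representative (from the PSWS assumption) and its \emph{forced coincidence} with the LW-stable one (from almost-properness for LW) that rules this out; this is also why, as emphasised after the statement, a general $\mathfrak{N}^m_{\MR,(\alpha,\beta)}$ need not admit such a description.
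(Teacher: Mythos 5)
Your proposal is correct, and its core coincides with the paper's: both build the comparison morphism from the inclusion $H^{m,[A,B]}_{\SWS,(\alpha,\beta)}\hookrightarrow H^{m,[A,B]}_{\LW,(\alpha,\beta)}$, and both hinge on the claim that every MR stable pair admits an SWS stable representative in its equivalence class. The paper asserts this in one line (``by our assumptions and Theorem \ref{stability theorem}''), whereas you usefully unpack it: DT stability exhibits $(Z,\cX)$ as an associated pair for $\trop(Z)$, the almost proper LW condition makes it the \emph{unique} LW and $(\alpha,\beta)$-stable associated pair, and the PSWS hypothesis supplies an SWS stable associated pair for the same configuration which, being also LW stable, is forced to coincide with it --- this is exactly the right reading of the paper's terse step, and your closing remarks correctly identify why the full PSWS hypothesis is needed (Proposition \ref{SWS stab 2}). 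Where you genuinely diverge is in upgrading the bijection on geometric points to an isomorphism of stacks: the paper uses finite inertia of the separated Deligne-Mumford stack $\mathfrak{N}^m_{\PSWS,(\alpha,\beta)}$, a bijective homomorphism of stabilisers (argument of Theorem 6.2.4 of \cite{CT}), representability via Lemma 6 of \cite{AK}, and then the isomorphism criterion Lemma 6.2.1 of \cite{CT}; you instead note that both stacks are full substacks of the stack of $(\alpha,\beta)$-stable pairs on $\mathfrak{X}'$, so the functor is fully faithful, and that the SWS locus --- a union of open GIT-stable loci intersected with the open smooth-support locus, saturated under the (open) equivalences --- is open, making the morphism an open immersion, after which surjectivity on geometric points closes the argument. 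Your route buys a shorter, self-contained finish in which the stabiliser bijection is automatic from full faithfulness, at the cost of having to justify openness at the stack level; the paper's route is the safer one when the morphism is only produced by descent from scheme-level inclusions and full faithfulness is not taken as tautological. One caution on your parenthetical alternative: a proper monomorphism is indeed a closed immersion, but a closed immersion bijective on points need not be an isomorphism unless the target is known to be reduced, so that variant as stated has a gap --- let the open-immersion argument carry the conclusion.
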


\begin{proof}
    Similarly to the arguments of Section 6.2 of \cite{CT}, it is clear that on the scheme level we have an inclusion
    \[
    H^{m,[A,B]}_{\SWS,(\alpha,\beta)} \longhookrightarrow H^{m,[A,B]}_{\LW,(\alpha,\beta)},
    \]
    where $H^{m,[A,B]}_{\SWS,(\alpha,\beta)}$ and $H^{m,[A,B]}_{\LW,(\alpha,\beta)}$ are the restrictions of $H^m_{[A,B],\SWS}$ and $H^m_{[A,B],\LW}$ to their respective $(\alpha,\beta)$-stable loci. As before, this gives rise to a morphism of stacks
    \[
    f\colon \mathfrak{N}^m_{\PSWS,(\alpha,\beta)}\longrightarrow \mathfrak{N}^m_{\MR,(\alpha,\beta)}.
    \]
    By our assumptions and Theorem \ref{stability theorem}, for every MR stable pair $(Z,\cX)\in \mathfrak{N}^m_{\MR,(\alpha,\beta)}(k)$, where $k$ is an algebraically closed field as before, there exists a representative of the equivalence class of $(Z,\cX)$ which is SWS stable. We therefore have an induced bijection between the sets of equivalence classes of each stack
    \[
    |f| \colon |\mathfrak{N}^m_{\PSWS,(\alpha,\beta)}(k)| \longrightarrow |\mathfrak{N}^m_{\MR,(\alpha,\beta)}(k)|.
    \]
    By Theorem \ref{theorem choice proper DM}, $\mathfrak{N}^m_{\PSWS,(\alpha,\beta)}$ is separated and Deligne-Mumford and therefore has finite inertia. The morphism $f$ induces a bijective homomorphism of stabilisers by the same argument as in the proof of Theorem 6.2.4 of \cite{CT}. By Lemma 6 of \cite{AK} (Lemma 6.2.3  of \cite{CT}), this implies that $f$ is representable. Therefore $f$ is an isomorphism of stacks by Lemma 6.2.1 of \cite{CT}.
\end{proof}

As shown in \cite{MR}, all choices of proper Deligne-Mumford stacks obtained for different $(\alpha,\beta)$, are birational to each other.

\subsection{Further comments on smoothings}\label{smoothings section}

Given a pair $(Z,\cX)\in \mathfrak{N}^m_{\SWS}(k)$ (or $ \mathfrak{N}^m_{\LW}(k)$), where $\cX$ is pulled back from a modified special fibre, we would like to understand what deformations of $(Z,\cX)$ exist in $\mathfrak{N}^m_{\SWS}$ (or $ \mathfrak{N}^m_{\LW}$). In particular we would like to study what smoothings exist from irreducible components of $\cX$ to irreducible components of a second fibre $\cX'$. This will allow us to understand explicitly what limits need to be included in the stacks we construct. We will say there is a \emph{smoothing} from an irreducible component $W\subset \cX$ to an irreducible component $W'\subset \cX'$ if there exist a discrete valuation ring $R$ and a smooth variety $\cY$ over $S\coloneqq\Spec R$ such that $W$ is equal to the restriction of $\cY$ to the generic point of $S$ and the restriction of $\cY$ to the closed point of $S$ is a union of irreducible components, where $W'$ is one of these components. The question we seek to answer in this subsection is the following.

\medskip
\emph{Given a polyhedral subdivision of $\trop(X_0)$, what smoothings exist from the components corresponding to the vertices in $\trop(X_0)$}?

\medskip
The question of smoothings takes place on the level of representatives of equivalence classes in $\mathfrak{X}'$ as it depends on $\Delta_i$-multiplicity (see Definition \ref{delta multiplicity}). In order to interpret this information tropically, we enhance $\trop(X_0)$ with the notion of $\Delta$-multiplicity.

\begin{definition}
    Let $\cX\in\mathfrak{X}'(k)$ be a modified special fibre and let $\trop(\cX)$ be the corresponding polyhedral subdivision of $\trop(X_0)$. For each edge of $\Delta$-type 1 or 2 in $\trop(\cX)$ we may assign to it a positive integer, called its \emph{tropical $\Delta$-multiplicity}.
\end{definition}

\begin{remark}
    The tropical $\Delta$-multiplicity of an edge in $\trop(\cX)$ is the $\Delta_i$-multiplicity common to all geometric components corresponding to the vertices connected to this edge. For convenience, here we assign the multiplicity to the edge and not the vertices, as the edge contains the information of the $\Delta$-type (see Definition \ref{delta multiplicity}).
\end{remark}

Given a fibre $\cX$ as above and any bubble $B_0$ in $\cX$, we will show in Proposition \ref{smoothings proposition} what smoothings exist from $B_0$ to bubbles $B_i$ in blow-ups of $\cX$ (possibly after base change). Before formulating this proposition, we describe the operations on $\trop(\cX)$ which can result in smoothings.

\subsubsection*{Allowing for base changes.} As we are working with stacks, we always allow for base changes. Tropically, this means we may adjust the height of the subdivided triangle $\trop(\cX)$ within the cone of the 1-parameter family. We abuse notation slightly and denote this object after base change by $\trop(\cX)$ again. Effectively, this re-scales and allows us to add more integral points to the triangle. Geometrically, this corresponds to the phenomenon that we can consider any scheme $X[A,B]$ to be a sublocus of a larger scheme $X[A',B']$, where the embedding is given by \eqref{AB embed}. This operation of placing a family within a larger family is exactly what allows us to make all appropriate modifications to $X_0$ in the expanded degeneration setup. We may describe blow-ups of $\cX$ by adding edges to the re-scaled triangle which create vertices by intersecting at these new integral points.

\subsubsection*{Assigning tropical $\Delta$-multiplicity.} The next step is to assign a tropical $\Delta$-multiplicity to each edge of $\trop(\cX)$. By definition, this must be greater or equal to 1. It does not affect the $\Delta$-type of the vertices attached to this edge. Let $e$ be an edge of $\Delta$-type $1$ and multiplicity $k$ attached to the vertex $(a,b,0)$ of the triangle (where the coordinates are given in the 3-dimensional cone). Let $b'$ be the smallest integer such that $b'>b$ and such that $(a',b',0)$ has an edge attached for some $a'$. Let $b''$ be the largest integer such that $b>b''$ and such that $(a'',b'',0)$ has an edge attached for some $a''$. We remark here that the $\Delta$-multiplicity $k$ of $e$ does not need to equal the number of integral points up until the next edge, i.e.\ $k$ is independent of the values $b'-b$ or $b-b''$. A similar statement can be made for $e$ of $\Delta$-type $2$.

Note that whereas on the level of schemes, $\Delta$-multiplicity makes an important difference, on the level of stacks varying the $\Delta$-multiplicity preserves the equivalence class of the fibre $\cX$. Even though here we work with stacks, it is still important to consider the underlying schemes in order to understand what smoothings exist.

\subsubsection*{Sliding the edges.} Where edges have tropical $\Delta$-multiplicity greater than 1, we can slide out as many copies of this edge as the number of the multiplicity, as long as the new configuration is compatible with the unbroken condition. These edges cannot cross another parallel edge. Keeping the notation as above, let $e'$ denote the edge attached to $(a',b',0)$ and let $e''$ be the edge attached to $(a'',b'',0)$. These may be of type 1 or 2. Sliding edges from $e$ can be described formally as defining a new unbroken configuration where there are $r$ edges between $e'$ and $e''$, including $e$, and the tropical $\Delta$-multiplicities of these $r$ edges should sum to $k$. The unbroken condition imposes some restrictions on how the edges can slide. The situation is similar if we start with an edge $e$ of type 2, or we may simultaneously slide edges of type 1 and 2 which are attached to the same point of $Y_1\cap Y_2$.

Wherever one of these new edges which slides out from $e$ intersects another edge, it creates a new vertex, which corresponds to a new bubble on the geometric side.

\subsubsection*{Criterion for smoothability.} Let $\cX$ and $B_0$ as above. Let $v_0$ be the vertex in $\trop(\cX)$ corresponding to $B_0$ and let $\{e_j\}$ be the edges of $\Delta$-type 1 and 2 going through this vertex. There are at most two and each one has a given multiplicity. Let $\{v_i\}_{i\geq 1}$ be the collection of all vertices resulting from intersections which arise when the copies of the edges $e_i$ are allowed to slide to an unbroken configuration in the way described above. Moreover, we include in the collection $\{v_i\}_{i\geq 1}$ only vertices which share an edge with $v_0$. Finally, let $\{B_i\}_{i\geq 1}$ be the geometric bubbles corresponding to these vertices.

\begin{proposition}\label{smoothings proposition}
    There is a smoothing from $B_0$ to each of the $B_i$ and there exist no other smoothings from $B_0$. Therefore, a subcheme $Z$ supported in $B_0$ is either fixed or tends towards the interior of a component $B_i$.
\end{proposition}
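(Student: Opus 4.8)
The plan is to build a precise dictionary between the tropical sliding operations on $\trop(\cX)$ described above and explicit one-parameter degenerations realised inside the expanded families $X[A',B']$, and then to read off both the existence and the completeness of smoothings from the local blow-up equations \eqref{BU1 eqns}.

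First I would fix local coordinates around $B_0$. The vertex $v_0$ lies at an integral point of the subdivided triangle, and the edges $\{e_j\}$ through it record exactly how $B_0$ meets its neighbouring components; in the étale local model $B_0$ is cut out by the equations \eqref{BU1 eqns} and carries the $\PP^1$-coordinates $(x_0^{(k)}:x_1^{(k)})$ and $(y_0^{(k)}:y_1^{(k)})$ of the $\Delta_1$- and $\Delta_2$-components passing through $v_0$. The tropical $\Delta$-multiplicity of each $e_j$ records how many $\Delta_1^{(k)}$ (resp.\ $\Delta_2^{(k)}$) components are identified along that edge, so a multiplicity greater than one signals a chain of identified $\Delta$-components that can separate after base change.

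Second, for the existence direction I would realise each admissible slide geometrically. Given a slide of a multiplicity-$k$ edge $e_j$ into parallel copies whose multiplicities sum to $k$ and whose configuration is unbroken, I use the embedding \eqref{AB embed} to view $\cX$ as a fibre inside a larger $X[A',B']$ in which the slid copies appear as genuine exceptional $\Delta$-components. A base change $\xi\colon S = \Spec R \to C[A',B']$ sending the generic point to a vector in which the newly separated parameters are invertible, and the closed point to the vector realising the slid configuration, then produces a family $\cY$ whose generic fibre recovers $B_0$ and whose special fibre is the chain of bubbles cut out by the slid edges; the bubble $B_i$ attached to a new vertex $v_i$ sharing an edge with $v_0$ occurs as one of its components. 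Smoothness of $\cY$ along the relevant locus follows from the explicit local form of \eqref{BU1 eqns}, the centres of the blow-ups being smooth and meeting transversely where these bubbles sit. The requirement that $v_i$ share an edge with $v_0$ is precisely the condition that $B_i$ be adjacent to $B_0$ in the special fibre, which is what makes $B_0$ actually degenerate onto $B_i$.

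The main obstacle will be the completeness claim, that these are the only smoothings. Here I would argue as in Remark \ref{don't need strong compatibility}: any smoothing of $B_0$ is governed by a flat family whose components obey equations of the shape \eqref{subscheme equations}, and a coordinate direction in which $v_0$ carries no edge corresponds to a variable that has been set equal to a unit, which the blow-up equations forbid from ever becoming proportional to a vanishing parameter $t_i$ or a ratio $x_1^{(k)}/x_0^{(k)}$. Hence the only new components reachable from $B_0$ by a flat family are the bubbles attached to intersection points of slid copies of the edges through $v_0$; the unbroken condition (Definition \ref{unbroken}) cuts these down to exactly the admissible slides, and restricting to vertices sharing an edge with $v_0$ isolates the components into whose interior $B_0$ genuinely degenerates. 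The concluding statement then follows immediately: for a subscheme $Z$ supported in $B_0$, in any flat family with generic fibre $(Z,\cX)$ the defining equations \eqref{subscheme equations} of the support force each point either to stay in $B_0$ or to move into the interior of one of the adjacent bubbles $B_i$ to which $B_0$ smooths, with no other behaviour possible.
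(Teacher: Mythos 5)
Your proposal is correct and follows essentially the same route as the paper's proof: both arguments work from the local blow-up equations \eqref{BU1 eqns}, characterise a smoothing as a degeneration in which both sides of some nonzero defining equations of $B_0$ tend to zero together --- so that unit-valued variables can never become proportional to a vanishing parameter and one can never enter a $\Delta$-component (or a $Y_i$) that $B_0$ did not already meet --- and then match this criterion against the bubbles produced by sliding the edges through $v_0$, exactly as in Remark \ref{don't need strong compatibility}. Your existence direction is somewhat more explicit than the paper's (realising each slide by the embedding \eqref{AB embed} into a larger $X[A',B']$ and a pullback along a discrete valuation ring), but this is an elaboration of the same mechanism rather than a different argument.
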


\begin{proof}
    In étale local coordinates, the bubble $B_0$ is given by the equations \eqref{BU1 eqns} describing the family $X[A,B]$, as well as some additional equations which fix certain variables in order to determine $B_0$ within the family. We consider the nonzero equations defining $B_0$. Each of these nonzero equations necessarily contains coordinates corresponding to some $\PP^1$ associated to a $\Delta$-component. The variables in these equations may vary as long as both sides of the equation vary together so that the equalities are preserved. Note that if the base codimension is greater or equal to one, then varying the values on both sides of the equation preserves the equivalence class of $\cX$ in $\mathfrak{X}'$. The equivalence class changes only when both sides go to zero, in which case the new equivalence class this gives is in the closure of the first one. A bubble $B'$ has a smoothing from $B_0$ if and only if its equations can be obtained from those of $B_0$ by letting both sides of some nonzero equations of $B_0$ tend to zero. Therefore, all equations which define $B_0$ must hold for $B'$, where some nonzero equations of $B_0$ may go to zero for $B'$. In particular, $B'$ cannot be determined by a nonzero equation which does not hold for $B_0$. Equivalently, smoothings do not allow us to enter $\Delta$-components in which we were not before.

    This is exactly what the set of bubbles $\{B_i\}_{i\geq 1}$ defined above gives us. They correspond to vertices $v_i$ created by sliding out and intersecting copies of the edges of $\Delta$-types 1 and 2 passing through $v_0$. Since each vertex $v_i, i\geq 1$, must share an edge with $v_0$, the set of $\Delta_j^{(k)}$-components which are equal to the corresponding bubble $B_i$ must be a subset of the set of $\Delta_j^{(k)}$-components which are equal to $B_0$. It follows that no nonzero equation can hold for a bubble in $\{B_i\}_{i\geq 1}$ which does not hold for $B_0$. Any vertex which is not created in this way or does not share an edge with $v_0$ will either correspond to a bubble equal to a $\Delta_j^{(k)}$-component which is not equal to $B_0$ and will therefore be defined by some nonzero equation which does not hold for $B_0$; or will correspond to a component which is not equal to any $\Delta_j^{(k)}$, which means it must be a $Y_i$ component, and therefore one of the equations $x=0$, $y=0$ or $z=0$ which held for $B_0$ does not hold for it.
\end{proof}

\emph{Examples of smoothings.} We can consider a triangle with added edges and vertices defining some $\trop(\cX)$ like that given on the left of Figure \ref{tropical smoothings figure}. Here, we have chosen some vertex which we label $v_0$ and we label the edges going through this vertex by $e_1$ and $e_2$. If, for example, the edge $e_1$ has tropical $\Delta$-multiplicity greater than 1, then we may slide out an edge $e'_1$ as shown in blue on the right hand side of Figure \ref{tropical smoothings figure}. The tropical $\Delta$-multiplicities of $e_1$ and $e'_1$ in the second picture should add up to the tropical $\Delta$-multiplicity of $e_1$ in the first picture. The new vertices created by intersecting the edge $e'_1$ with the existing edges of $\trop(\cX)$ are drawn in blue. We can then see that there exists a smoothing from $v_0$ to $v_1$, as they share an edge, but no smoothings from $v_0$ to $u$ or $w$.

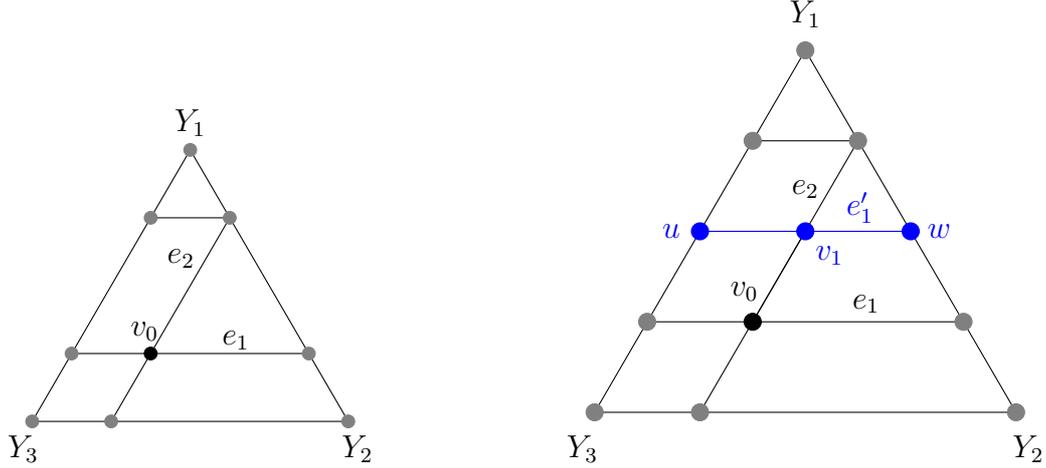
\begin{figure} 
    \begin{center}   
    \begin{tikzpicture}[scale=1.2]
        \draw   

        (-1.732, -1) -- (0,2)       
        (-1.732, -1) -- (1.732, -1)
        
        (1.732, -1) -- (0,2)
        
        ;
        \draw[black] (-0.433,1.25) -- (0.433,1.25);
        \draw[black] (-0.866,-1) -- (0.433,1.25);
        \draw[black] (-1.3,-0.25) -- (1.3,-0.25);
        
        \draw (-0.5,0) node[anchor=center, color = black]{$v_0$};
        \draw (0.5,-0.1) node[anchor=center, color = black]{$e_1$};
        \draw (-0.1,0.8) node[anchor=center, color = black]{$e_2$};
        \filldraw[gray] (-1.3,-0.25) circle (2pt) ;
        \filldraw[gray] (1.3,-0.25) circle (2pt) ;
        \filldraw[gray] (-0.866,-1) circle (2pt) ;
        \filldraw[gray] (0,2) circle (2pt) ;
        \filldraw[gray] (-1.732, -1) circle (2pt) ;
        \filldraw[gray] (1.732, -1) circle (2pt) ;
        \filldraw[gray] (-0.433,1.25) circle (2pt) ;
        \filldraw[gray] (0.433,1.25) circle (2pt) ;
        \filldraw[black] (-0.433,-0.25) circle (2pt) ;
        \draw (0,2.3) node[anchor=center]{$Y_1$};
        \draw (-1.832, -1.3) node[anchor=center]{$Y_3$};
        \draw (1.832, -1.3) node[anchor=center]{$Y_2$};
        
    \end{tikzpicture}
    \hspace{2cm}
    \begin{tikzpicture}[scale=1.6]
\draw   

        (-1.732, -1) -- (0,2)       
        (-1.732, -1) -- (1.732, -1)
        
        (1.732, -1) -- (0,2)
        
        ;
        \draw[black] (-0.433,1.25) -- (0.433,1.25);
        \draw[black] (-0.866,-1) -- (0.433,1.25);
        \draw[black] (-1.3,-0.25) -- (1.3,-0.25);
        \draw[blue] (-0.866,0.5) -- (0.866,0.5);
        \draw (-0.433,-0.25) -- (0,0.5);
        
        \draw (-0.5,0) node[anchor=center, color = black]{$v_0$};
        \draw (0.5,-0.1) node[anchor=center, color = black]{$e_1$};
        \draw (0,0.85) node[anchor=center, color = black]{$e_2$};
        \draw (0.45,0.7) node[anchor=center, color = blue]{$e'_1$};
        \draw (0.2,0.3) node[anchor=center, color = blue]{$v_1$};
        \draw (1.1,0.5) node[anchor=center, color = blue]{$w$};
        \draw (-1.1,0.5) node[anchor=center, color = blue]{$u$};
        \filldraw[gray] (-1.3,-0.25) circle (2pt) ;
        \filldraw[gray] (1.3,-0.25) circle (2pt) ;
        \filldraw[gray] (-0.866,-1) circle (2pt) ;
        \filldraw[gray] (0,2) circle (2pt) ;
        \filldraw[gray] (-1.732, -1) circle (2pt) ;
        \filldraw[gray] (1.732, -1) circle (2pt) ;
        \filldraw[gray] (-0.433,1.25) circle (2pt) ;
        \filldraw[gray] (0.433,1.25) circle (2pt) ;
        \filldraw[black] (-0.433,-0.25) circle (2pt) ;
        \filldraw[blue] (-0.866,0.5) circle (2pt) ;
        \filldraw[blue] (0.866,0.5) circle (2pt) ;
        \filldraw[blue] (0,0.5) circle (2pt) ;
        \draw (0,2.3) node[anchor=center]{$Y_1$};
        \draw (-1.832, -1.3) node[anchor=center]{$Y_3$};
        \draw (1.832, -1.3) node[anchor=center]{$Y_2$};
        
\end{tikzpicture}
    \end{center}
    \caption{Tropical representation of smoothings.}
    \label{tropical smoothings figure}
\end{figure}

\section{Hilbert schemes of points on K3 surfaces and MMP}\label{minimality section}

Up until now, we have only been considering the property that $X\to C$ is semistable. In this section, we will discuss what properties our constructions have if we start with some added assumptions. We will want to show that the proper stacks we construct are semistable themselves, in the following sense.

\begin{definition}\label{semistable dlt stack}
    Let $\mathfrak{Y}$ be a Deligne-Mumford stack which is flat and locally of finite type over $C$. For any $C$-scheme $S$ with a (non-constant) étale morphism $\xi\colon S\to C$ and any $Y\in \Ob(\mathfrak{Y}_C)$ over $C$, we denote the composition map $f\colon \xi^*Y\to C$. We say that $\mathfrak{Y}\to C$ is \emph{semistable} if every such composition map $f$ is semistable.
\end{definition}

Moreover, if $K_X+(X_0)_{\red}$ is semi-ample, i.e.\ $X\to C$ is a \emph{good semistable model}, we would like to show that the proper stacks constructed in previous sections also satisfy this property, which we extend to stacks in the following way.

\begin{definition}\label{good minimal stack}
    Let $\mathfrak{Y}$ be a Deligne-Mumford stack which is flat and locally of finite type over $C$. For any $C$-scheme $S$ with a (non-constant) étale morphism $\xi\colon S\to C$ and any $Y\in \Ob(\mathfrak{Y}_C)$ over $C$, let $W\coloneqq \xi^*Y$ and denote by $f\colon W\to C$ the composition map. We say that $\mathfrak{Y}\to C$ is a \emph{good minimal model} if for every such $W$ and $f$, the divisor $K_{W/C} + f^{-1}(0)$ is semi-ample.
\end{definition}

\medskip
In particular, we will want to consider the case where $X\to C$ is a good type III degeneration of K3 surfaces in the sense of Kulikov \cite{Kulikov}. This means that $\pi\colon X\to C$ is semistable, that its general fibres are smooth K3 surfaces and that the dual complex of its central fibre $X_0$ is a triangulated sphere. Moreover, there exists a relative logarithmic 2-form $\omega_\pi\in H^0(X,\Omega^2_{X/C}(\log X))$ such that $\wedge^n \omega_\pi \in H^0(X,K_{X/C})$ is nowhere vanishing. In this case, we will show that such a 2-form on $X\to C$ induces a nowhere degenerate logarithmic 2-form on the proper stacks we constructed.

\medskip
Most of the results in this section follow easily from the results of Gulbrandsen, Halle, Hulek and Zhang in \cite{GHHZ}. For clarity, however, we give certain of the relevant proofs again here, where the details are slightly different for our situation.

\subsection{Good semistable and dlt minimal models}

\begin{proposition}\label{semistable proposition}
    The stacks $\mathfrak{M}^m_{\SWS}$, $\mathfrak{M}^m_{\LW}$, $\mathfrak{N}^m_{\PSWS,(\alpha,\beta)}$ and $\mathfrak{N}^m_{\MR,(\alpha,\beta)}$ are semistable degenerations over $C$. Moreover, they are normal and $\QQ$-factorial.
\end{proposition}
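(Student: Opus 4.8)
The plan is to reduce all three assertions to semistability of a smooth atlas and then to read that off from the monomial shape of the base map $t=t_1\cdots t_{n+1}$, following the computations of \cite{GHHZ}. Since semistability (Definition \ref{semistable dlt stack}), normality and $\QQ$-factoriality are all étale-local and, by Theorem \ref{theorem choice proper DM} together with \cite{CT}, all four stacks are Deligne--Mumford, it is enough to verify the three properties on a smooth atlas; and by the isomorphisms $\mathfrak{M}^m_{\SWS}\cong\mathfrak{M}^m_{\LW}$ and $\mathfrak{N}^m_{\PSWS,(\alpha,\beta)}\cong\mathfrak{N}^m_{\MR,(\alpha,\beta)}$ it suffices to treat the two SWS-type stacks, whose atlas $W$ is the smoothly supported (and, in the $\mathfrak{N}$ case, $(\alpha,\beta)$-stable) locus of $\Hilb^m(X[A,B]/C[A,B])$.

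The crucial step is that $W$ is smooth. Smooth support means precisely that the support of every parametrised subscheme avoids the double curves and triple points of the fibres of $X[A,B]\to C[A,B]$, so $W$ is the Hilbert scheme of $m$ points of the relative smooth locus $(X[A,B])^{\mathrm{sm}}\to C[A,B]$. As $X[A,B]$ is smooth---being an iterated blow-up of the smooth scheme $X\times_{\AA^1}\AA^{n+1}$ along smooth codimension-two centres---this relative smooth locus is a smooth family of surfaces, and the relative form of Fogarty's theorem gives that its Hilbert scheme of points is smooth over $C[A,B]\cong\AA^{n+1}$; hence $W$ is smooth, and restricting to the $(\alpha,\beta)$-stable sublocus preserves this.

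Semistability over $C$ now follows from the monomial $t=t_1\cdots t_{n+1}$. On $\AA^{n+1}$ the divisor $\bigcup_i\{t_i=0\}$ is reduced with simple normal crossings, and its pullback along the smooth morphism $W\to\AA^{n+1}$ is again reduced simple normal crossings in the smooth scheme $W$; thus $W\to C$ has smooth total space and reduced normal-crossings central fibre, i.e.\ is semistable. To descend this to the stacks I would check that the building equivalences of $\mathfrak{C}'$ preserve the structure: the $\GG_m^n$-action leaves $t$ invariant (the weights in Proposition \ref{group proposition} telescope to the identity) and preserves each divisor $\{t_i=0\}$, while the gluing isomorphisms of Section \ref{2nd stack} are isomorphisms over $C$, so the reduced normal-crossings central fibre descends to the quotient stack. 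This yields semistability in the sense of Definition \ref{semistable dlt stack}.

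Normality and $\QQ$-factoriality are then immediate from the smoothness of $W$: a smooth scheme is normal and, by Auslander--Buchsbaum, factorial, hence $\QQ$-factorial, and these properties pass to the smooth Deligne--Mumford quotient; the two isomorphisms above transport the conclusion to $\mathfrak{M}^m_{\LW}$ and $\mathfrak{N}^m_{\MR,(\alpha,\beta)}$. The steps I expect to require the most care are, first, confirming that the relative smooth locus and the relative Fogarty smoothness behave as claimed near the triple-point stratum $\pi^*(Y_1\cap Y_2\cap Y_3)$---which means tracking the blow-up equations \eqref{BU1 eqns} to see that the mixed-type bubbles introduce no extra singularities---and, second, verifying that imposing $(\alpha,\beta)$-stability cuts out an \emph{open} (hence smooth) sublocus of the relative-smooth-locus Hilbert scheme, exactly as in the Donaldson--Thomas stability analysis of \cite{GHHZ}, rather than merely a union of smooth locally closed strata.
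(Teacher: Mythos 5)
The load-bearing claim in your argument is the smoothness of $X[A,B]$, and your justification for it is false. The fibre product $X\times_{\AA^1}\AA^{n+1}$ is \emph{not} smooth: étale locally it is the hypersurface $xyz=t_1\cdots t_{n+1}$ in $\AA^3\times\AA^{n+1}$, and at any point where two of $x,y,z$ and two of the $t_i$ vanish, all partial derivatives of $xyz-t_1\cdots t_{n+1}$ vanish, so the total space is singular. Nor are the centres smooth: for $k\geq 2$ the ideal $\langle x,t_1\cdots t_k\rangle$ cuts out a union of codimension-two loci meeting along deeper strata. So ``iterated blow-up of a smooth scheme along smooth codimension-two centres'' fails at the very first step, and smoothness of $X[A,B]$ --- on which your atlas smoothness, the SNC pullback along $W\to\AA^{n+1}$, and your normality and $\QQ$-factoriality conclusions all hinge --- is precisely the nontrivial chart computation encoded in the local equations \eqref{BU1 eqns}; it must be proved, not asserted as formal. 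Once it is supplied, your route (relative smooth locus, relative Fogarty, pullback of the reduced SNC divisor $V(t_1\cdots t_{n+1})\subset\AA^{n+1}$ along a smooth morphism, finite stabilisers for $\QQ$-factoriality) is in substance the content of Lemma 7.3 of \cite{GHHZ}, which is all the paper's own proof invokes: it cites that lemma for semistability, deduces normality from semistability, and obtains $\QQ$-factoriality from finiteness of the stabilisers.

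A second genuine misstep is your opening reduction: the isomorphism $\mathfrak{N}^m_{\PSWS,(\alpha,\beta)}\cong\mathfrak{N}^m_{\MR,(\alpha,\beta)}$ of Section \ref{2nd isomorphism of stacks} exists only when the single pair $(\alpha,\beta)$ defines both a proper LW and a proper SWS stability condition, and the paper emphasizes (cf.\ Proposition \ref{SWS stab 2} and the remark after the main theorems) that a stack $\mathfrak{N}^m_{\MR,(\alpha,\beta)}$ need not admit any SWS partner; so ``it suffices to treat the two SWS-type stacks'' does not cover the MR case in general. This is repairable --- LW stability is also an open condition inside the smoothly supported locus (smooth support plus finite automorphisms), so the same atlas argument runs on the LW side directly --- but as written the reduction is wrong. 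Finally, the two points you flag as ``requiring care'' are genuinely open in your write-up: the DT requirement that a tube component contain \emph{no} point of the support is a closed condition along a stratum, so openness of the $(\alpha,\beta)$-stable locus needs an argument (degenerations into tubes must be excluded along the lines of Remark \ref{don't need strong compatibility}); and descent of semistability to the quotient needs an étale slice argument, since $W\to[W/\GG_m^n]$ is smooth of relative dimension $n$ rather than étale, so the SNC central fibre does not literally ``descend'' but must be compared on slices --- which is again exactly where the finite-stabiliser hypothesis, the ingredient the paper's proof actually names, enters.
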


\begin{proof}
    Semistability follows directly from Lemma 7.3 of \cite{GHHZ}. The stacks are normal as they are semistable. This also applies to any constructions arising from the methods of Maulik and Ranganthan \cite{MR}. Finally, the stacks are $\QQ$-factorial, as all elements of the stacks have finite stabilisers.
\end{proof}

We will now assume that $K_X+(X_0)_{\red}$ is semi-ample and prove the following result.

\begin{theorem}
    The stacks listed in Proposition \ref{semistable proposition} are good minimal models in the sense of Definition \ref{good minimal stack}.
\end{theorem}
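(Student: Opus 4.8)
The plan is to reduce the semi-ampleness of $K_{W/C}+f^{-1}(0)$ to the hypothesis that $K_X+(X_0)_{\red}$ is semi-ample, by factoring through the relative Hilbert--Chow morphism, following the strategy of \cite{GHHZ}. Unwinding Definition \ref{good minimal stack}, it suffices to fix an étale $\xi\colon S\to C$ and an object $Y$, set $W\coloneqq\xi^*Y$ with structure map $f\colon W\to C$, and show that $K_{W/C}+f^{-1}(0)$ is semi-ample on the scheme $W$. By Proposition \ref{semistable proposition} the map $f$ is semistable, so $f^{-1}(0)=(W_0)_{\red}$, and since $C\cong\AA^1$ has trivial canonical class we may replace $K_{W/C}+f^{-1}(0)$ by $K_W+(W_0)_{\red}$ throughout. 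Here $W$ is the total space of the relative Hilbert scheme $\Hilb^m(\cX/C)$, restricted to the relevant stable locus, for an expanded family $\cX\to C$, where $\cX=\xi^*X[A,B]$ for a suitable unbroken $(A,B,n)$.

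The first step is to record the log crepancy of the expansion on the surface side. The projection $\pi\colon\cX\to X$ is a composition of the blow-ups of Section \ref{2nd BUs} (together with a base change), each centred on $Y_1$, $Y_2$, or a stratum cut out with the vanishing of basis directions; all of these are log canonical centres of the simple normal crossings pair $(X,(X_0)_{\red})$. Consequently the modification is crepant,
\[
K_{\cX/C}+(\cX_0)_{\red}=\pi^*\bigl(K_{X/C}+(X_0)_{\red}\bigr),
\]
and since the right-hand side is semi-ample by hypothesis and semi-ampleness is preserved by pullback along the proper morphism $\pi$, the divisor $K_{\cX/C}+(\cX_0)_{\red}$ is semi-ample.

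The second step is to propagate semi-ampleness through the symmetric product and Hilbert--Chow. Writing $V\coloneqq\Sym^m_C(\cX/C)$ with finite quotient map $q\colon\cX^{\times_C m}\to V$, the pullback $q^*\bigl(K_{V/C}+(V_0)_{\red}\bigr)$ is the external sum of the semi-ample divisors $K_{\cX/C}+(\cX_0)_{\red}$ and is therefore semi-ample; as $q$ is finite surjective, this forces $K_{V/C}+(V_0)_{\red}$ to be semi-ample on $V$. The Hilbert--Chow morphism $\rho\colon W\to V$ is crepant for the associated log pairs---the classical crepancy of $\Hilb^m\to\Sym^m$ of a surface, in the relative semistable form of \cite{GHHZ}---so that $K_W+(W_0)_{\red}=\rho^*\bigl(K_{V/C}+(V_0)_{\red}\bigr)$, and pulling back along the proper morphism $\rho$ yields semi-ampleness of $K_{W/C}+f^{-1}(0)$. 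The argument uses only semistability (Proposition \ref{semistable proposition}) and the expanded structure shared by $\mathfrak{M}^m_{\SWS}$, $\mathfrak{M}^m_{\LW}$, $\mathfrak{N}^m_{\PSWS,(\alpha,\beta)}$ and $\mathfrak{N}^m_{\MR,(\alpha,\beta)}$, so it applies to each of them, and equally to the models arising from \cite{MR}.

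The main obstacle I anticipate lies in making the two crepancy statements precise in the degenerate, expanded setting rather than the smooth one. For the expansion, one must check that every exceptional divisor of the blow-ups $X[A,B]$ enters $(\cX_0)_{\red}$ with coefficient exactly one, so that the log discrepancy vanishes; this is where I would use that the unbroken condition (Definition \ref{unbroken}) only permits blow-ups centred on strata of $(\cX_0)_{\red}$, even though the construction is asymmetric in $Y_1$ and $Y_2$. For Hilbert--Chow, the crepancy must be established relative to $C$ over the singular central fibres, which is exactly the technical input supplied by \cite{GHHZ}; I would invoke their results in the smooth and normal crossings cases and verify only that the additional expansions permitted here leave the discrepancy computation unchanged. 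I would also note that, unlike the theorem concerning the holomorphic symplectic form, this statement requires nothing beyond the semi-ampleness hypothesis on $K_X+(X_0)_{\red}$.
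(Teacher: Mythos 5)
Your outline is close in spirit to the paper's proof (both ultimately rest on Beauville's computation relating the canonical bundles of the $m$-fold product, the blow-up of its diagonal, and the Hilbert scheme), but it has a genuine gap: the torus quotient is missing. The expansions $X[A,B]$ live over the enlarged base $C[A,B]=C\times_{\AA^1}\AA^{n+1}$, not over $C$, and the étale-local models of the stacks $\mathfrak{M}^m_{\LW}$, $\mathfrak{N}^m_{\MR,(\alpha,\beta)}$, etc.\ are (open loci in) the quotient of the stable locus of $\Hilb^m(X[A,B]/C[A,B])$ by $G\cong\GG_m^n$. So the chain $\cX\to\Sym^m_C(\cX/C)\to\Hilb^m(\cX/C)$ ``over $C$'' through which you propagate semi-ampleness does not exist as stated: everything happens over $C[A,B]$, and one must then descend along the quotient map $q_G$ by the positive-dimensional group $G$. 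Your norm/averaging trick legitimately handles the finite $S_m$-quotient, but it cannot handle $q_G$: semi-ampleness does not descend along a $\GG_m^n$-quotient unless one actually produces $G$-invariant sections. That is precisely what the paper does --- it pulls back sections of $K_X^{\otimes r}$ not vanishing at prescribed points (this is where semi-ampleness of $K_X+(X_0)_{\red}$ enters), takes external tensor products to obtain $G$- and $S_m$-invariant sections on $\cX^m$, pushes them through the diagonal blow-up and the $S_m$-quotient via the ramification formula, and uses that $q_G$ contracts no divisors (a consequence of the blow-ups being small) so that the canonical bundle itself descends along $q_G$.

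Two further points. First, your justification of the expansion-side crepancy via blow-ups of log canonical centres is shaky --- $X\times_{\AA^1}\AA^{n+1}$ is itself singular and the pair is not snc --- but the conclusion holds for a simpler reason, which is the one the paper uses: all the blow-ups are \emph{small} (exceptional loci of codimension at least two), so $K_{\cX}\cong(\pi\circ\phi)^*K_X$ on the nose, with no log correction and no discrepancy computation needed. Second, your Hilbert--Chow crepancy claim is false on the full relative Hilbert scheme over the degenerate fibres: the paper notes in its introduction that $\Hilb^m(X/C)$ has a very singular central fibre, and the Beauville-type computation is only valid on the locus $W_*$ of smoothly supported subschemes with at most one double point. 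The paper's proof therefore never asserts semi-ampleness of a log canonical divisor on the whole symmetric product or Hilbert scheme; instead it establishes nonvanishing of invariant pluricanonical sections at each point of the stable locus, extending them twice across codimension-two complements (from $W^{[m],s}_{*,G}$ to $W^{[m],s}_G$, then to $\cX^{[m],s}_G$). To repair your argument you would need to restrict to $W_*$ before invoking crepancy, carry the stable locus through each step, and replace the descent claims along $q_G$ with an invariant-section construction --- at which point you have reproduced the paper's proof.
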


\begin{proof}
    Let $S\to C$ be a scheme over $C$ and let $\cX\in \mathfrak{X}'(S)$ (note that $\cX\in \mathfrak{X}(S)$ is a special case), where $\cX \coloneqq \xi^* X[A,B]$ for some étale morphism $\xi \colon S\to C[A,B]$. We denote by $\phi\colon \xi^* X[A,B] \to X[A,B]$ the corresponding strongly cartesian morphism. Now let $P_1,\dots,P_m$ be a collection of points in $\cX$. Since $\phi$ is a base change morphism and $\pi\colon X[A,B] \to X$ is the $G$-equivariant projection, similarly to Lemma 5.12 of \cite{GHHZ}, we have that $K_{\cX} \cong (\pi \circ \phi)^* K_X$ (note that all blow-ups we make in our construction are also small).

    \medskip
    Now, let $P_1,\dots,P_m$ be a finite collection of points in $\cX$. Similarly to Lemma 5.13 of \cite{GHHZ}, we may find a $G$-invariant section of $K_{\cX}^{\otimes r}$, not vanishing at any $P_i$. Indeed, as we have assumed that $X\to C$ is a good semistable minimal model, for each $i$, we know that there exists a section $\Tilde{s_i}$ of $K_{X}^{\otimes r}$ that does not vanish at $(\pi \circ \phi)(P_i)$. It follows by the above that $s_i\coloneqq (\pi \circ \phi)^* \Tilde{s_i}$ is a $G$-invariant section of $K_{\cX}^{\otimes r}$ which does not vanish at $P_i$. Therefore, as in Lemma 5.13 of \cite{GHHZ}, if we take a $k$-linear combination $s = \sum_{i}\lambda_i s_i$, for sufficiently general $\lambda_i\in k$, the section $s$ does not vanish at any $P_i$.

    \medskip
    Now, let $Z = (P_1,\dots,P_m)\in \cX^m$, where $\cX^m$ denotes the product $\cX \times_S \cdots \times_S \cX$. Then, similarly to Lemma 5.14 of \cite{GHHZ}, there exists a $G$- and $S_m$-invariant section $\sigma$ of $(K_{\cX^m})^{\otimes r}$ that does not vanish at $Z$, where $S_m$ denotes the $m$-th symmetric group. This is given by the restriction to the stable locus of the tensor product $\pr_1^*(s) \otimes \cdots \otimes \pr_m^*(s)$ of the pullbacks of $s$ along each projection $\pr_i\colon \cX^m \to \cX$. Clearly, this is $G$- and $S_m$-invariant and does not vanish at $Z$.

    \medskip
    We may now apply a generalised version of Beauville's argument from \cite{Beauville}. Let $W$ denote the inverse image of $X\setminus \Sing(X_0)$ by the composition of morphisms $\pi \circ \phi$. (Note that this is not defined quite in the same way as in \cite{GHHZ} but the overall idea is the same.) We then denote by $W^m$ and $W^{[m]}$ the relative product and relative Hilbert scheme of $W\to S$. Additionally, we denote by $W_*^m$ and $W_*^{[m]}$ the restrictions of the above schemes to the open loci whose points are length $m$ zero-dimensional subschemes which have at most one double point. The complements of $W_*^m$ in $W^m$ and $W_*^{[m]}$ in $W^{[m]}$ are both of codimension 2.

    \medskip
    Now, the Hilbert scheme $W^{[m]}$ is obtained from $W^m$ by blowing up the diagonal in $W^m$ and quotienting by the action of $S_m$. Denote by $b_{\diag}\colon \Tilde{W^m_*} \to W^m_*$ and $q_m\colon \Tilde{W^m_*} \to W^{[m]}_*$ the appropriate restrictions of the blow-up and quotient maps and by $q_G\colon W^{[m]}_* \to W^{[m]}_{*,G}$ the quotient by the group $G$. Let $E$ be the exceptional divisor of $b_{\diag}$. Then, following \cite{Beauville} and \cite{GHHZ}, $E$ is precisely the ramification locus of $q_m$, which gives rise to the isomorphisms
    \begin{align*}
        K_{\Tilde{W^m_*}} &\cong q^*_m(K_{W^{[m]}_*})(E) \\
        K_{\Tilde{W^m_*}} &\cong b^*_{\diag}(K_{W^{m}_*})(E).
    \end{align*}
    Note that if the blow-ups made in the construction of $X[A,B]$ are small, then $q_G$ contracts no divisors and we have again an isomorphism
    \[
    K_{W^{[m]}_*} \cong q_G^*(K_{W^{[m]}_{*,G}}).
    \]
    This, in turn, will yield an isomorphism
    \[
    (q_G \circ q_m)*(K_{W^{[m]}_{*,G}})(E) \cong b^*_{\diag}(K_{W^{m}_*})(E).
    \]
    Finally, let $W^{[m],s}_{*,G}$ be the restriction of $W^{[m]}_{*,G}$ to some stable locus, with respect to any of the stability conditions defined in previous sections. For any point in $W^{[m],s}_{*,G}$, the section $(q_G\circ q_m)_* b_{\diag}^* \sigma$, where $\sigma$ is defined as above gives an everywhere nonvanishing section of $(K_{W^{[m],s}_{*,G}})^{\otimes r}$. But as $W^{[m],s}_{*,G}$ has codimension 2 in $W^{[m],s}_{G}$, the section $\sigma$ extends to a nonvanishing section of $(K_{W^{[m],s}_{G}})^{\otimes r}$. Then, again, as $W^{[m],s}_{G}$ has codimension 2 in $\cX_G^{[m],s}$, where $\cX_G^{[m],s}$ is the stable locus of $\cX^{[m]}$ quotiented by $G$, this section extends to an everywhere nonvanishing section on $K_{\cX_G^{[m],s}}^{\otimes r}$.

    \medskip
    Let $\mathfrak{Y}$ be any of the stacks listed in Proposition \ref{semistable proposition}. Any stable object $(Z,\cX)$ of $\mathfrak{Y}$ is such that $Z\in \cX_G^{[m],s}$ for some stability condition. And we have shown that for any such $Z$, there is a section of $K_{\cX_G^{[m],s}}^{\otimes r}$ which does not vanish at $Z$. Now, for any such stack $f\colon \mathfrak{Y}\to C$ that we constructed, $\mathfrak{Y}_0\coloneqq f^{*}(0)$ is a reduced principal divisor in the following sense. Let $S$ and $\xi$ be as above and write $\cY\coloneqq \xi^* H^{m,s}_{[A,B]}$ in $\mathfrak{Y}$, where $H^{m,s}_{[A,B]}$ is the stable locus of length $m$ zero-dimensional subschemes on $X[A,B]$. Let $\phi\colon \cY \to H^{m,s}_{[A,B]}$ be the corresponding strongly cartesian morphism. Then we have that $\cY_0\coloneqq (f\circ \phi)^*(0)$ is a reduced principal divisor in $\cY$. We therefore have that
    \[
    K_{\cY} + \cY_0
    \]
    is semi-ample and $f\colon\mathfrak{Y}\to C$ is a good minimal model.
\end{proof}

\begin{remark}
    In the above proof, we used the assumption that all blow-ups made in the construction of $X[A,B]$ were small, i.e.\ that the map $b\colon X[A,B] \to X\times_{\AA^1} \AA^{n+1}$ contracts no divisors. This assumption happens to be true in our case and is convenient, but not strictly necessary. Indeed, keeping with the notation of the above proof, we have a diagram
    \begin{figure}[H]
    \centering
    \begin{tikzcd}\centering
        \Tilde{W_*^m} \arrow[r, "b_{\diag}"] \arrow[d, "q_m"]  & W_*^m \arrow[r, "b^m"] & (X\times_{\AA^1}\AA^{n+1})^m \arrow[d] \\
        W_*^{[m]} \arrow[d, "q_G"] \arrow[rr] &&C\times_{\AA^1}\AA^{n+1} \arrow[d, "/G"] \\
        W^{[m]}_{*,G} \arrow[rr] && C,
    \end{tikzcd}
    \label{2-form_diag}
    \end{figure}
    \noindent where the morphisms in the diagram denote the restrictions of the previous morphisms of the same names to the $W_*$ locus. As $G$ is given as the natural torus action on the projective coordinates introduced by the blow-up map $b$, the morphism $q_G$ must contract exactly the divisors coming from the exceptional divisors of $b^m$ and no other divisors. By applying a generalised version of the Beauville argument once more, we may obtain the desired result.
\end{remark}

\begin{corollary}\label{trivial can bun corollary}
    Suppose $X\to C$ has trivial relative canonical bundle. Then the relative canonical bundle $K_{\mathfrak{Y}/C}$ is trivial, for any of the stacks $\mathfrak{Y}\to C$ listed in Proposition~\ref{semistable proposition}.
\end{corollary}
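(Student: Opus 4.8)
The plan is to deduce the triviality of $K_{\mathfrak{Y}/C}$ directly from the nonvanishing section constructed in the preceding theorem, now sharpening the conclusion under the stronger hypothesis. The key observation is that the entire argument of the previous theorem produced, for every $C$-scheme $S$, every $\cX \coloneqq \xi^* X[A,B]$, and every stable $Z \in \cX_G^{[m],s}$, an everywhere nonvanishing section $s_i \coloneqq (\pi\circ\phi)^* \Tilde{s_i}$ of $K_{\cX}^{\otimes r}$ built by pulling back sections of $K_X^{\otimes r}$. Under the assumption that $X\to C$ has trivial \emph{relative} canonical bundle, I would first upgrade this: the isomorphism $K_{\cX} \cong (\pi\circ\phi)^* K_X$ established in that proof (via the smallness of the blow-ups and the fact that $\pi$ is the $G$-equivariant projection) can be refined to a relative statement $K_{\cX/S} \cong (\pi\circ\phi)^* K_{X/C}$, since the base-change and projection morphisms are compatible with the relative dualizing sheaves. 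Because $K_{X/C}$ is trivial by hypothesis, $K_{\cX/S}$ is trivial, and one obtains a \emph{nowhere}-vanishing $G$-invariant relative section rather than merely a section nonvanishing at finitely many chosen points.

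From there, I would run the Beauville-type argument of the previous theorem in its relative form. The same chain of isomorphisms
\[
K_{\Tilde{W^m_*}} \cong q^*_m(K_{W^{[m]}_*})(E), \qquad K_{\Tilde{W^m_*}} \cong b^*_{\diag}(K_{W^{m}_*})(E),
\]
together with $K_{W^{[m]}_*} \cong q_G^*(K_{W^{[m]}_{*,G}})$ (valid since $q_G$ contracts no divisors), transports the trivial relative canonical bundle of $\cX$ to a trivial relative canonical bundle on $W^{[m],s}_{*,G}$, hence on its codimension-2 extension $\cX_G^{[m],s}$. The only modification needed is to carry the relative dualizing sheaf $K_{\cX/S}$ (rather than $K_{\cX}$) through every step; since all morphisms in the diagram are over $S$ and over $C$, each isomorphism descends to the relative setting. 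The everywhere-nonvanishing relative section then gives a trivialization of $K_{\cX_G^{[m],s}/S}$.

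Finally, I would glue these local trivializations into a global statement about the stack. For $\mathfrak{Y}$ any of the stacks in Proposition \ref{semistable proposition}, its relative canonical bundle $K_{\mathfrak{Y}/C}$ is defined fppf-locally by the $K_{\cY/S}$ for $\cY \coloneqq \xi^* H^{m,s}_{[A,B]}$; since the constructed trivializing sections are $G$- and $S_m$-invariant, they are compatible with the equivalences defining $\mathfrak{X}'$ and descend to the quotient stack. The main obstacle I anticipate is verifying that the trivialization is genuinely canonical enough to descend, i.e.\ that the $G$-invariant sections agree on overlaps under the identifications $\tau_{J,[A,B]}^{J',[A',B']}$ and $\rho_{J,[A,B]}^{J',[A',B']}$ of Section \ref{2nd stack}; this amounts to checking that the pullback construction $(\pi\circ\phi)^*$ of a fixed trivialization of $K_{X/C}$ is independent of the chosen representative $X[A,B]$, which follows because $\pi$ commutes with the embeddings \eqref{AB embed} and these isomorphisms act trivially on the original family $X$. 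Granting this compatibility, the local trivializations patch to a global isomorphism $K_{\mathfrak{Y}/C} \cong \cO_{\mathfrak{Y}}$, as required.
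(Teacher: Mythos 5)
Your proposal is correct and follows essentially the same route as the paper, which states the corollary without a separate proof precisely because it is the preceding theorem's argument sharpened in the way you describe: pull back the global trivialization of $K_{X/C}$ via $K_{\cX/S} \cong (\pi\circ\phi)^* K_{X/C}$, transport it through the Beauville chain of isomorphisms and the codimension-2 extensions, and note that the resulting $G$- and $S_m$-invariant nowhere-vanishing section is compatible with the equivalences of $\mathfrak{X}'$ since those act trivially on $X$. Your explicit verification of descent through the $\tau_{J,[A,B]}^{J',[A',B']}$ and $\rho_{J,[A,B]}^{J',[A',B']}$ identifications is a detail the paper leaves implicit, but it is the right check and your justification for it is sound.
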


\begin{remark}
    Finally, we would like to remark that the fact that we are working with stacks and not schemes here allows us to produce semistable minimal models, as we do not see any quotient singularities. It is possible to construct good minimal models in the world of schemes using these expanded degeneration techniques, but these will no longer be semistable; they will be \emph{divisorial log terminal} (see \cite{dFKX} for a definition) as they will contain finite quotient singularities, by nature of the expanded degeneration machinery. This can be seen to happen already in \cite{GHH} and \cite{GHHZ}.
\end{remark}

\subsection{Symplectic structure}\label{minimality}

We will now assume that $X\to C$ is a type III good degeneration of K3 surfaces. In particular, this means that $K_{X/C}$ is trivial. By Corollary \ref{trivial can bun corollary}, this implies that $K_{\mathfrak{Y}/C}$ is trivial, for any of the stacks $\mathfrak{Y}\to C$ listed in Proposition \ref{semistable proposition}. This can be seen alternatively as following from the fact that the relative holomorphic symplectic logarithmic 2-form on $X\to C$ induces a relative holomorphic symplectic logarithmic 2-form on $\mathfrak{Y}\to C$.

\medskip
As the dual complex of $X_0$ is a triangulated sphere, we may choose a labelling of the vertices of this triangulation, such that each vertex is labelled by either $Y_1$, $Y_2$ or $Y_3$ and each triangle has exactly one of each vertices. All the constructions we made on the local family $\Spec k[x,y,z,t]\to \Spec k[t]$ can then be extended to a family $X\to C$ of K3 surfaces by making the following modification. Everywhere we blew up the pullback of $Y_i\times_{\AA^1} V(t_j)$ inside some scheme, we now blow up instead the pullback of $Y_{(i)}\times_{\AA^1} V(t_j)$, where $Y_{(i)}$ denotes the union of all $Y_i$ components in $X$.

\begin{proposition}
    The stacks listed in Proposition \ref{semistable proposition} are symplectic, in the sense that they carry a nowhere degenerate logarithmic 2-form.
\end{proposition}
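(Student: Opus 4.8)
The plan is to run the Beauville construction exactly as in the proof of the previous theorem, but carrying the symplectic form itself rather than only its top power, and then to identify the top wedge power of the resulting form with the nowhere-vanishing canonical section already produced. Since $X \to C$ is a type III good degeneration of K3 surfaces, the relative logarithmic $2$-form $\omega_\pi \in H^0(X, \Omega^2_{X/C}(\log X))$ is nowhere degenerate, and as $X$ has relative dimension $2$ over $C$ the form $\omega_\pi$ itself generates the (trivial) relative canonical bundle $K_{X/C}$. Keeping the notation of the preceding proof, let $\cX \coloneqq \xi^* X[A,B]$ with strongly cartesian morphism $\phi \colon \cX \to X[A,B]$ and $G$-equivariant projection $\pi \colon X[A,B] \to X$. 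Because all blow-ups in the construction of $X[A,B]$ are small, we again have $K_{\cX} \cong (\pi \circ \phi)^* K_X$, and correspondingly the pullback $\Tilde{\omega} \coloneqq (\pi \circ \phi)^* \omega_\pi$ is a $G$-invariant relative logarithmic $2$-form on $\cX$ whose generic nondegeneracy is inherited from $\omega_\pi$.

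First I would transport $\Tilde{\omega}$ to the relative Hilbert scheme. Restricting to the open locus $W$ defined in the previous proof, set $\Omega \coloneqq \sum_i \pr_i^* \Tilde{\omega}$ on $W^m_*$; this is a $G$- and $S_m$-invariant relative logarithmic $2$-form. Pulling back along $b_{\diag}$ and descending through the quotient maps $q_m$ and $q_G$, using the $S_m$- and $G$-invariance together with the canonical bundle isomorphisms recalled in the previous proof, produces a relative logarithmic $2$-form on the stable locus $W^{[m],s}_{*,G}$. As the complements of $W^m_*$ in $W^m$ and of $W^{[m]}_*$ in $W^{[m]}$ have codimension $2$, and the stacks of Proposition \ref{semistable proposition} are normal, this form extends by a Hartogs-type argument across the codimension-$2$ loci to a relative logarithmic $2$-form on all of $\mathfrak{Y}$.

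To check nowhere-degeneracy I would compute the top wedge power. Because $\Tilde{\omega}$ is a relative $2$-form on a relative surface, $\Tilde{\omega} \wedge \Tilde{\omega} = 0$, so in the expansion of $\Omega^{\wedge m}$ only the cross term survives, giving $\Omega^{\wedge m} = m!\, \pr_1^* \Tilde{\omega} \wedge \cdots \wedge \pr_m^* \Tilde{\omega}$. Under the identification $\Tilde{\omega} = (\pi\circ\phi)^* \omega_\pi$ and $\omega_\pi = s$, where $s$ is the nowhere-vanishing section of $K_X$ (here the extra power $r$ of the previous proof may be taken to be $1$, since $K_{X/C}$ is trivial), this top power equals, up to a nonzero scalar, the section $\sigma$ of the previous proof. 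That section was shown to be nowhere vanishing; since $\Omega^{\wedge m}$ and $\sigma$ are both regular and agree on the dense open $W^{[m],s}_{*,G}$ whose complement has codimension $2$, they agree everywhere, so $\Omega^{\wedge m}$ is nowhere vanishing and the $2$-form is nowhere degenerate on all of $\mathfrak{Y}$.

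The hard part will be confirming that $\Tilde{\omega}$ remains a genuine section of $\Omega^2_{\cX/C}(\log)$, nowhere degenerate, along the expanded boundary components $\Delta_i^{(k)}$ introduced by the small blow-ups. One must verify that the logarithmic $2$-form acquires exactly the right behaviour along these new boundary divisors, so that it neither loses its log-pole structure nor develops zeros of its top power along the exceptional loci. This is the symplectic analogue of the canonical bundle identity $K_{\cX} \cong (\pi \circ \phi)^* K_X$, and it is here that the smallness of the blow-ups and the $G$-equivariance of $\pi$ do the essential work; the global statement over the K3 family $X \to C$ then follows by the triangulated-sphere labelling of the dual complex of $X_0$, which lets one glue the local computations, exactly as in the extension of the construction to $X \to C$ described above.
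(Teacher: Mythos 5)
Your proposal is correct in substance, but it takes a visibly different route from the paper's own proof, which is only two lines long: the paper observes that $f\colon \mathfrak{Y}\to C$ is semistable (Proposition \ref{semistable proposition}) and then simply cites Section 7.2 of \cite{GHHZ}, where the existence of the nowhere degenerate logarithmic 2-form on such semistable degenerations is established, with the log structure coming from $0\in C$ and $\mathfrak{Y}_0=f^*(0)$. What you have done instead is reconstruct, in the paper's own notation, the argument that the citation outsources: pull back $\omega_\pi$ through $\pi\circ\phi$, form $\Omega=\sum_i \pr_i^*\Tilde{\omega}$ on $W^m_*$, descend through $b_{\diag}$, $q_m$ and $q_G$ using $S_m$- and $G$-invariance, extend across the codimension-2 complements, and certify nondegeneracy by identifying $\Omega^{\wedge m}=m!\,\pr_1^*\Tilde{\omega}\wedge\cdots\wedge\pr_m^*\Tilde{\omega}$ with the nowhere-vanishing canonical section $\sigma$ of the preceding theorem (with $r=1$, legitimate here since $K_{X/C}$ is trivial for a type III Kulikov degeneration, cf.\ Corollary \ref{trivial can bun corollary}). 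This is exactly the Beauville-type argument of \cite{Beauville} as adapted in \cite{GHHZ}, so your proof buys self-containedness at the cost of re-deriving the reference; the paper's version buys brevity by leaning on semistability as the only hypothesis that needs checking in this setting. Two points in your sketch deserve tightening if it were to replace the citation: the descent of the 2-form (not just its top power) across the exceptional divisor $E$ of $b_{\diag}$ under the ramified double cover $q_m$ requires Beauville's local extension argument and not only invariance; and your flagged ``hard part'' is genuinely the crux, though it resolves more easily than you suggest — since the modified families remain semistable, the pullback map $(\pi\circ\phi)^*\Omega^2_{X/C}(\log)\to\Omega^2_{\cX/S}(\log)$ is a map of line-bundle-determinant type that is an isomorphism away from codimension 2, hence an isomorphism by normality, so $\Tilde{\omega}$ is automatically a log 2-form whose nonvanishing as a section of $K_{\cX}\cong(\pi\circ\phi)^*K_X$ is inherited from $\omega_\pi$, with no separate boundary computation needed. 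Neither issue is a fatal gap; your argument is a legitimate, more explicit alternative to the paper's citation-based proof.
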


\begin{proof}
    Let $f\colon \mathfrak{Y}\to C$ be one such construction. Since $f$ is semistable, this follows from Section 7.2 of \cite{GHHZ}. The logarithmic 2-form is defined with respect to the divisorial logarithmic structure given by the divisors $0$ in $ C$ and $\mathfrak{Y}_0 = f^*(0)$ in $ \mathfrak{Y}$.
\end{proof}

\printbibliography[
heading=bibintoc,
title={References}
] 

\end{document}